\newtheorem{lemma}{Lemma}[section]
\newtheorem{theorem}[lemma]{Theorem}
\newtheorem{proposition}[lemma]{Proposition}
\newtheorem{corollary}[lemma]{Corollary}
\theoremstyle{definition}
\newtheorem{definition}[lemma]{Definition}
\theoremstyle{remark}
\newtheorem{remark}[lemma]{Remark}
\numberwithin{equation}{section}
\renewcommand{\be}{\begin{equation}}
\newcommand{\ee}{\end{equation}}
\renewcommand{\bar}{\overline}
\renewcommand{\Re}{{\rm Re }\, }
\newcommand{\J}{\mathbb J}
\newcommand{\supp}{{\rm supp }\,}
\newcommand{\floor}[1]{\lfloor #1 \rfloor}
\newcommand{\Op}[1]{{\rm Op}\left(#1\right)}
\newcommand{\Opw}[1]{{\rm Op}^W\!\left(#1\right)}
\newcommand{\Opbw}[1]{{\rm Op}^{\rm BW}\!\left(#1\right)}
\renewcommand{\div}{{\rm div}}
\newcommand{\pa}{\partial}
\renewcommand{\s}{{\sigma}}
\newcommand{\ii}{{\im}}
\newcommand{\Tb}{{\breve T}}
\newcommand{\vr}{\varrho}
\newcommand{\vect}[2]{{\begin{pmatrix}#1 \\ #2\end{pmatrix}}}
\newcommand{\hol}{{W^{\vr, \infty}}}
\newcommand{\eq}{{\tm}}
\title{\bf  Local well posedness  of the \\
Euler-Korteweg equations on $ \T^d $}
\author{M. Berti, 
 A. Maspero,
 F. Murgante\footnote{
International School for Advanced Studies (SISSA), Via Bonomea 265, 34136, Trieste, Italy. 
 \textit{Emails: } \texttt{berti@sissa.it},  \texttt{alberto.maspero@sissa.it},
 \texttt{fmurgant@sissa.it}
 }}
\begin{document}
\maketitle

\begin{flushright}
\vspace{1em}
{\it Dedicated to the memory  of   Walter Craig}
\vspace{1em}
\end{flushright}

\begin{abstract}
We consider the Euler-Korteweg system with space periodic boundary conditions
$ x \in \T^d $. We prove 
a local in time existence result of 
classical solutions for irrotational velocity fields 
requiring natural minimal regularity assumptions on the initial data.  
\end{abstract}


\section{Introduction}

In this paper we consider the compressible Euler-Korteweg (EK) system 
\begin{equation}
\label{EK-complete}
\begin{cases} \partial_t \rho + {\rm div}(\rho \vec u)=0\\
\partial_t \vec u + \vec u \cdot \nabla \vec u + 
\nabla g(\rho)= \nabla \big( K(\rho)\Delta \rho+ \frac{1}{2} K'(\rho)|\nabla \rho|^2 \big)  \, , 
\end{cases} 
\end{equation}
which is a modification of the 
Euler equations for compressible fluids to include capillary  effects,
under space periodic boundary conditions $ x \in \T^d := (\R/2 \pi \Z)^d $.   
The scalar variable $ \rho (t,x)  > 0 $ is the density of the fluid and 
 $ \vec u (t,x) \in \R^d $ is the time dependent velocity field. 
The functions $ K (\rho )  $, $ g (\rho )  $ are defined 
on $ \R^+ $, smooth,  and $ K (\rho )  $ is  positive.

The quasi-linear equations \eqref{EK-complete} appear in a variety of physical contexts 
modeling phase transitions  \cite{DS},  water waves 
\cite{BDL}, quantum hydrodynamics where $ K(\rho) = \kappa / \rho $
 \cite{AM}, see also  \cite{CDCS}.

Local well posedness results for the (EK)-system  have been obtained in 
Benzoni-Gavage, Danchin and Descombes \cite{BDD}
 for initial data  sufficiently 
 localized in the space variable $ x \in \R^d $. 
Then, thanks to dispersive estimates, global in time existence results 
have been obtained for small irrotational data by Audiard-Haspot \cite{audiard1}, 
assuming the sign condition $g'( \rho)  > 0$.
The case of quantum hydrodynamics corresponds to $K(\rho) = \kappa/\rho$ and, {in this case}, the (EK)-system
 is formally equivalent, via Madelung transform, to a semilinear
Schr\"odinder equation on $ \R^d $. {Exploiting this fact,} 
global in time weak solutions have been obtained by Antonelli-Marcati \cite{AM, AM2}
also allowing $\rho(t,x) $ to become zero (see also the recent paper \cite{AM3}).

\smallskip

In this paper we prove a local in time existence result 
for the solutions of 
\eqref{EK-complete}, with space periodic boundary conditions, under
natural minimal regularity assumptions  on the initial datum in 
Sobolev spaces, see Theorem \ref{thm:main0}.
Relying on this result, in a forthcoming paper \cite{BMM},
we shall prove a set of long time existence results for the (EK)-system in 
$ 1 $-space dimension, in the same spirit of \cite{BD}, \cite{BFF}. 

\smallskip

We consider 
 {an initially irrotational  velocity field that, under the evolution of \eqref{EK-complete}, 
remains irrotational for all times}.
An irrotational vector field on $ \T^d $ reads (Helmotz decomposition)
\be\label{vecuc}
\vec u = \vec c (t) + \nabla \phi \, , \quad \vec c (t) \in \R^d  \, , \quad 
 \vec c (t) = {\frac{1}{(2\pi)^d}} \int_{\T^d} \vec u \, \di x   \, , 
\ee
where $ \phi : \T^d \to \R $ is a scalar potential. 
By the second equation in 
\eqref{EK-complete} and ${\rm rot}\, \vec u  = 0 $, we get 
$$
\pa_t \vec c (t) 
= - {\frac{1}{(2\pi)^d}} \int_{\T^d} \vec u \cdot \grad \vec u \, \di x  
= {\frac{1}{(2\pi)^d}} \int_{\T^d}  -\frac12  \grad (|\vec u|^2) \, \di x  = 0  \quad
\Longrightarrow \quad \vec c(t) = \vec c(0)
$$
is independent of time.  
Note that if the dimension $d = 1$, the average $ {\frac{1}{2\pi}} \int_{\T} u(t,x) \di x$ is
an integral of motion for \eqref{EK-complete}, and 
thus any solution $u (t,x)$, $ x \in \T $,  of the (EK)-system \eqref{EK-complete} 
has the form  \eqref{vecuc} with $c(t) = c(0) $ independent of time,  
that is $ u (t,x) = c(0) + \phi_x (t,x) $.

The (EK) system \eqref{EK-complete} 
is Galilean invariant: if
$ (\rho (t,x), \vec u (t,x)) $ solves \eqref{EK-complete} then
$$
\rho_{\vec c} (t,x) := \rho_{\vec c} (t,x + \vec c t) \, , \quad 
\vec u_{\vec c} (t,x) := \vec u (t,x + \vec c t) - \vec c 
$$ 
solve \eqref{EK-complete} as well. Thus, 
regarding the  Euler-Korteweg system in a frame moving with a 
 constant speed $ \vec c (0) $,  
we may always consider  in \eqref{vecuc} that 
$$
\vec u = \nabla \phi \, , \quad \phi : \T^d \to \R \, .
$$ 
The Euler-Korteweg equations 
\eqref{EK-complete} read,  for irrotational fluids, 
\begin{equation}
\label{iniziale}
\begin{cases} \partial_t \rho + {\rm div}(\rho \nabla \phi )=0\\
\partial_t \phi +\frac{1}{2} |\nabla \phi |^2 + g(\rho)= K(\rho)\Delta \rho+ \frac{1}{2} K'(\rho)|\nabla \rho|^2 \, . 
\end{cases} 
\end{equation}
The main result of the paper proves local well posedness for the solutions of 
\eqref{iniziale} with initial data $(\rho_0, \phi_0) $ in Sobolev spaces
$$
H^s (\T^d) := \Big\{
u (x) = \sum_{j\in \Z^d} u_j e^{\im j \cdot x} \ :  \ 
\| u \|_s^2   := \sum_{j \in \Z^d} |u_j|^2 \langle j \rangle^{2s} < + \infty \Big\}  
$$
where $  \langle j \rangle := \max\{1,|j|\} $, 
under the natural mild regularity assumption $ s > 2 + (d/2)$. 
Along the paper, 
$ H^s (\T^d) $ may denote 
either the Sobolev space 
of real valued functions $ H^s (\T^d, \R) $  or the complex valued ones $ H^s (\T^d, \C) $. 

\begin{theorem}{\bf (Local existence on $\T^d$)}
\label{thm:main0} 
Let $s > 2 +  \frac{d}{2} $. 
For any  initial data
$$
(\rho_0, \phi_0) \in H^s (\T^d,\R) \times H^s(\T^d,\R) 
 \quad {\rm with} \quad \rho_0(x) > 0  \, ,
\quad  \forall x \in \T^d \,,
$$
there exists $T:= T(\| (\rho_0, \phi_0)\|_{s_0+2}, \min_x \rho_0(x)) >0 $ and a unique solution $ (\rho, \phi)$ of \eqref{iniziale} 
such that 
\[
(\rho, \phi) \in C^{0}\Big([-T,T], H^{s}(\mathbb{T}^d,\mathbb{R})\times
 H^{s}(\mathbb{T}^d,\mathbb{R})\Big) \cap 
  C^{1}\Big([-T,T], H^{s-2}(\mathbb{T}^d,\mathbb{R})\times
 {H}^{s-2}(\mathbb{T}^d,\mathbb{R})
\Big)\ 
\]
and  $\rho(t,x) > 0 $ for any $t \in [-T, T]$. 
Moreover, for $|t| \leq T$,  
the solution map $ (\rho_0, \phi_0) \mapsto (\rho (t, \cdot), \phi (t, \cdot) ) $
is locally defined and continuous  in $ H^{s}(\mathbb{T}^d,\mathbb{R})\times
 H^{s}(\mathbb{T}^d,\mathbb{R}) $. 
\end{theorem}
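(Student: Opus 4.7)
The plan is to reduce the quasilinear system \eqref{iniziale} to a single scalar paradifferential Schr\"odinger-type equation for a complex ``good unknown'', close energy estimates at the minimal regularity threshold, and construct the solution by an iteration/mollification scheme.

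First, I would introduce the Riemann-type variable $w = w(\rho)$ defined by $w'(\rho) = \sqrt{K(\rho)/\rho}$ and set $z := w(\rho) + \im \phi$. A direct substitution in \eqref{iniziale} shows that $z$ satisfies an equation of the form
\[ \pa_t z + \nabla \phi \cdot \nabla z - \im \sqrt{K(\rho)\rho}\, \Delta z = \text{l.o.t.}\, , \]
where the lower order terms depend smoothly on $\rho$ and are at most first-order in $z$. The key point is that the principal part of the spatial operator is now \emph{diagonal} and purely imaginary of order $2$, with positive real coefficient $\sqrt{K(\rho)\rho}$, so the equation behaves like a variable coefficient Schr\"odinger equation with a transport perturbation.

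Next, I would apply the Bony-Weyl paradifferential calculus and write
\[ \pa_t z = \opbw\!\big( \im a_2(\rho;\xi) + a_1(\vec u, \rho; \xi)\big) z + R(\rho,\phi) z + f(\rho,\phi) \, , \]
with real elliptic principal symbol $a_2 \sim \sqrt{K(\rho)\rho}\,|\xi|^2$, subprincipal transport part $a_1 \sim \vec u \cdot \im\xi$, and remainder $R$ bounded on $H^s$ uniformly in $\|(\rho,\phi)\|_{s_0+2}$ for some $s_0 > d/2$. Since the principal symbol is real, $\opbw(\im a_2)$ is anti-selfadjoint modulo $L^2$-bounded remainders, and a G\aa{}rding-type computation yields the tame estimate
\[ \tfrac{d}{dt} \|z\|_s^2 \le C\!\big(\|(\rho,\phi)\|_{s_0+2},\, (\min_x \rho)^{-1}\big)\, \|z\|_s^2 \, , \qquad \forall\, s \ge s_0 \, , \]
with a constant depending only on a low regularity norm, exactly as required by the dependence $T = T(\|(\rho_0,\phi_0)\|_{s_0+2}, \min_x \rho_0)$ in the statement.

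Existence is then obtained by a standard hyperbolic iteration: define frequency-truncated approximants $z_n$ solving a linearized paradifferential equation whose coefficients are built from $z_{n-1}$. The a priori bound above gives uniform $H^s$-control on a common interval $[-T,T]$ with $T$ depending only on $\|(\rho_0,\phi_0)\|_{s_0+2}$ and $\min_x \rho_0$; contraction (hence uniqueness) is established in the weaker norm $H^{s-2}$, in which the linearized operator does not lose derivatives on the coefficients, and both persistence of the full $H^s$-regularity and continuity of the data-to-solution map are then recovered through a Bona-Smith density argument. The main obstacle is closing the energy estimate at the sharp threshold $s > 2 + d/2$: naive commutator estimates would force $\rho \in H^{s+1}$ or lose a derivative on the coefficient of $\Delta z$, and only the precise order count of the Bony-Weyl remainders allows one to replace $s$ by $s_0+2 < s$ in the right-hand side. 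A secondary point is that $\rho(t,x)$ must remain strictly positive on $[-T,T]$, which follows from continuity of the flow in $L^\infty$ by choosing $T$ small relative to $\min_x \rho_0$.
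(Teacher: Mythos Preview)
Your reduction via the Riemann-type variable $w'(\rho)=\sqrt{K(\rho)/\rho}$ and $z=w(\rho)+\im\phi$ is correct and is essentially the approach of Benzoni-Gavage--Danchin--Descombes \cite{BDD}. The paper proceeds differently: rather than a \emph{nonlinear} change of unknown that diagonalizes the principal part at the level of the equation, it performs only the \emph{linear} substitution $u=c_1\rho+\im c_2\phi$ with constant $c_1,c_2$. This yields a $2\times 2$ system for $(u,\bar u)$ whose principal matrix $A_2$ is \emph{not} diagonal; diagonalization is carried out only at the symbolic level via a matrix $F(U;x)$ of eigenvectors, and enters the analysis through a modified energy $\|V\|_{\sigma,U}^2=\langle\Opbw{\lambda^\sigma|\xi|^{2\sigma}}\Opbw{F^{-1}}V,\Opbw{F^{-1}}V\rangle$. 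Your route avoids the matrix $F$ altogether since the scalar equation is already diagonal; the paper's route keeps the change of variable linear, which makes the bookkeeping of the semilinear remainders and of the zero-average projection $\Pi_0^\perp$ more transparent.

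There is, however, a real gap in your energy step. Anti-self-adjointness of $\opbw(\im a_2)$ in $L^2$ is exact in Weyl quantization, but for the $H^s$ estimate one must control $\big[\Opbw{|\xi|^{2s}},\Opbw{\im a_2|\xi|^2}\big]$, whose principal symbol is $\{|\xi|^{2s},a_2|\xi|^2\}=2s\,|\xi|^{2s}\,\xi\cdot\nabla_x a_2$, a \emph{real} symbol of order $2s+1$. This does not integrate to zero and costs half a derivative; no ``G\aa rding-type computation'' closes it. The paper's remedy is precisely to replace the weight $|\xi|^{2s}$ by $\lambda^s|\xi|^{2s}$ with $\lambda\propto a_2$, because then $\{\lambda^s|\xi|^{2s},\lambda|\xi|^2\}=0$ identically and the commutator drops to order $2s$ (see the vanishing in line \eqref{fin2}). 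In your scalar setting the analogous fix is to use the modified energy $\langle\Opbw{a_2^{\,s}|\xi|^{2s}}z,z\rangle$, or equivalently to conjugate by the zeroth-order gauge $\Opbw{a_2^{-s/2}}$. Without this weighted norm the a priori bound you state does not hold, and the iteration cannot be made uniform on a fixed time interval.
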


We remark that it is sufficient to prove the existence of a solution of
\eqref{iniziale} on $[0,T]$ because system \eqref{iniziale} is reversible: 
the Euler-Korteweg  vector field  $ X $  defined by 
\eqref{iniziale} satisfies 
$ X\circ \cS = - \cS \circ X $, 
where $ \cS $ is the involution 
\begin{equation}\label{rev_invo}
\cS\left( \begin{matrix}
\rho \\ \phi 
\end{matrix} \right) := \left( \begin{matrix}
 \rho^\vee \\ -  \phi^\vee 
\end{matrix} \right) \, , \quad \rho^\vee (x) :=  \rho (-  x) \, .  
\end{equation}
Thus, denoting by $ (\rho, \phi)(t,x) = \Omega^t (\rho_0, \phi_0) $ the solution of \eqref{iniziale}
with initial datum $(\rho_0, \phi_0)$ in the time interval $[0,T]$, we have that 
$ \cS \Omega^{-t} (\cS (\rho_0, \phi_0) ) $  solves  \eqref{iniziale}
with the same initial datum but  in the time interval $[-T,0]$.

Let us make some comments about the phase space of 
system \eqref{iniziale}. Note that the average 
$ {\frac{1}{(2\pi)^d}} \int_{\T^d} \rho(x) \, \di x $ 
is a prime integral of \eqref{iniziale} (conservation of the mass), namely 
\begin{equation}\label{media}
{\frac{1}{(2\pi)^d}} \int_{\T^d} \rho(x) \, \di x = \eq  \, , \quad  \eq \in \R \, ,  
\end{equation}
remains constant along the solutions of \eqref{iniziale}. 
Note also that the vector field of \eqref{iniziale} 
depends only on $  \phi - \frac{1}{(2 \pi)^d}\int_{\T^d} \phi \, \di x  $. 
As a consequence, the variables $ (\rho-\eq, \phi) $ 
belong naturally to some Sobolev space
$ H^s_0(\T^d) \times \dot H^s (\T^d) $,   
where  $H^s_0(\T^d) $ 
denotes the Sobolev space of functions with zero average
$$
H^s_0(\T^d) := \Big\{
u \in H^s(\T^d) \ \colon 
\  
\int_{\T^d} u(x) \di x = 0 \Big\} 
$$
and  $\dot H^s(\T^d)$, $s \in \R$, the corresponding 
homogeneous Sobolev space, namely the quotient space obtained by identifying all 
the $H^s(\T^d)$ functions which differ only by a constant. For simplicity of notation
we  denote the equivalent class $ [u] := \{ u + c,   c \in \R \} $, just by 
$ u $.  
The homogeneous norm of $ u \in \dot H^s (\T^d) $ is 
$ \| u \|_s^2   := \sum_{j \in \Z^d \setminus \{0\}} |u_j|^2 |j |^{2s}$.
We shall denote 
by $ \| \  \|_s $  
either the Sobolev norm in $ H^s $ or that one in the homogenous space $ \dot H^s $, 
according to the context. 

\smallskip

Let us make some comments about the proof. First, in view of \eqref{media}, 
we rewrite system \eqref{iniziale} in terms of $ \rho  \leadsto \eq + \rho $ 
with $ \rho \in H^s_0 (\T^d) $, obtaining  
\begin{equation}
\label{modif0}
\begin{cases} 
\partial_t \rho  = - \eq \Delta \phi - \div(\rho \grad \phi)\\
\partial_t \phi  = -\frac{1}{2} | \grad \phi |^2 - g(\eq+\rho) +  K(\eq+\rho) \Delta \rho+ \frac{1}{2} K'(\eq+\rho)| \grad \rho|^2 \, .
\end{cases} 
\end{equation}
Then Theorem \ref{thm:main0} follows by the following result, that we are going to prove 
\begin{theorem}
\label{thm:main1} 
Let $s > 2 +  \frac{d}{2} $ and $ 0 < \eq_1 < \eq_2 $.  
For any  initial data of the form $(\eq + \rho_0, \phi_0)$ with 
$ (\rho_0, \phi_0) \in H^s_0(\T^d) \times \dot H^s(\T^d) $ and
$ \eq_1 <  \eq + \rho_0(x) < \eq_2 $, $  \forall x \in \T^d $, 
there exists {$ T= T\big(\| (\rho_0, \phi_0)\|_{s_0+2}, \min_x (\eq+ \rho_0(x)) \big) >0    $ } and a unique solution $(\eq + \rho, \phi)$ of \eqref{modif0} 
such that 
\[
(\rho, \phi) \in C^{0}\Big([0,T], H_0^{s}(\mathbb{T}^d,\mathbb{R})\times
  \dot{H}^{s}(\mathbb{T}^d,\mathbb{R})\Big) \cap 
  C^{1}\Big([0,T], H_0^{s-2}(\mathbb{T}^d,\mathbb{R})\times
  \dot{H}^{s-2}(\mathbb{T}^d,\mathbb{R})
\Big)\ 
\]
and  
$ \eq_1 <  \eq + \rho(t,x) < \eq_2 $  holds  for any $t \in [0, T]$. 
Moreover, for $|t| \leq T$,  
the solution map $ (\rho_0, \phi_0) \mapsto (\rho (t, \cdot), \phi (t, \cdot) ) $
is locally defined and continuous  in $H^s_0(\T^d) \times \dot H^s(\T^d) $.
\end{theorem}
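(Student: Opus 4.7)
The plan is to prove Theorem \ref{thm:main1} by the standard scheme for quasi-linear dispersive systems of Schr\"odinger type: paralinearize the equations, reduce the principal symbol to a skew-symmetric normal form by a paradifferential conjugation, derive a priori energy estimates, and then construct the solution by a regularization procedure, finishing with uniqueness and continuous dependence in $H^s$.

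The first analytic step is to rewrite \eqref{modif0} in paradifferential form. Setting $U=(\rho,\phi)$ and applying Bony's paralinearization to the nonlinear terms $\div(\rho\nabla\phi)$, $\tfrac12|\nabla\phi|^2$, $K(\eq+\rho)\Delta\rho$, $\tfrac12 K'(\eq+\rho)|\nabla\rho|^2$ and $g(\eq+\rho)$, one obtains
\begin{equation*}
\partial_t U \,=\, \Opbw{A_2(U;x,\xi)+A_1(U;x,\xi)}\, U + R(U),
\end{equation*}
with principal symbol
\begin{equation*}
A_2(U;x,\xi)=\begin{pmatrix} 0 & (\eq+\rho(x))|\xi|^2 \\ -K(\eq+\rho(x))|\xi|^2 & 0 \end{pmatrix},
\end{equation*}
an order-one matrix $A_1$ encoding the transport term $\nabla\phi(x)\cdot\xi$ and the subprincipal contributions from paralinearization, and a remainder $R(U)$ bounded in $H^s$ whenever the coefficients $\eq+\rho$, $K(\eq+\rho)$ and $\nabla\phi$ are Lipschitz---which is exactly what the assumption $s>2+d/2$ provides via Sobolev embedding. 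Positivity $\eq_1<\eq+\rho<\eq_2$ and $K>0$ then yields purely imaginary eigenvalues $\pm i\sqrt{(\eq+\rho)K(\eq+\rho)}|\xi|^2$ for $A_2$, confirming the Schr\"odinger-type dispersive character of the system.

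The second step is the symmetrization. I would conjugate \eqref{modif0} by a paradifferential diagonal matrix of order zero, with symbol built from $\bigl(K(\eq+\rho)/(\eq+\rho)\bigr)^{1/4}$, so that modulo order-one remainders the principal part becomes a skew-symmetric matrix $\gamma(\rho)|\xi|^2 J$ with $\gamma(\rho)=\sqrt{(\eq+\rho)K(\eq+\rho)}$ and $J$ the standard symplectic matrix; equivalently, one introduces a complex good unknown $u=\alpha(\rho)+i\beta(\rho)\phi$ of Madelung type, so that at highest order the system reduces to the single Schr\"odinger-type equation $\partial_t u=i\Opbw{\gamma(\rho)|\xi|^2}\, u+\text{lower order}$. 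A further paradifferential change of variables replaces the order-one symbol by its self-adjoint part, at the cost of $L^\infty$-bounds on the Lipschitz norms of the coefficients. A standard energy estimate on the normal form then yields
\begin{equation*}
\tfrac{d}{dt}\|U(t)\|_s^2 \,\leq\, C\bigl(\|U(t)\|_{s_0+2},\,(\min_x(\eq+\rho(t,x)))^{-1}\bigr)\,\|U(t)\|_s^2,\qquad s_0>d/2,
\end{equation*}
and hence, by bootstrap, a uniform lifespan $T=T\bigl(\|U_0\|_{s_0+2},\min_x(\eq+\rho_0)\bigr)$.

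Solutions are then built by a parabolic regularization $\partial_t U_\epsilon=F(U_\epsilon)+\epsilon\Delta^2 U_\epsilon$ (or by a Friedrichs mollifier iteration), the a priori bounds of the previous step being uniform in $\epsilon$; weak-$*$ compactness together with strong convergence in $H^{s'}$ for $s'<s$ yields a solution in the announced class, the positivity constraint $\eq_1<\eq+\rho_\epsilon<\eq_2$ being preserved on a possibly shorter time interval. Uniqueness follows from an energy estimate on the difference of two solutions in $H^{s-1}$, where only a one-derivative loss is incurred, and continuous dependence in $H^s$ is obtained by a Bona--Smith argument, combining propagation of the high Sobolev norm with Lipschitz dependence at low regularity. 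In my view the main obstacle is the symmetrization step: achieving it at the sharp threshold $s>2+d/2$, with a non-constant coefficient $K(\eq+\rho)$ and the cross-coupling between $\div(\rho\nabla\phi)$ and $K(\eq+\rho)\Delta\rho$, genuinely requires paradifferential calculus, and it is there that most of the technical effort of the proof is concentrated.
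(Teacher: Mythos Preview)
Your proposal is correct and follows essentially the same route as the paper: paralinearize, pass to a complex good unknown, diagonalize the principal symbol paradifferentially, derive energy estimates, construct solutions by a regularization, and close with uniqueness plus a Bona--Smith argument for $H^s$-continuity. The one substantive difference is the construction step. Instead of a parabolic regularization $+\varepsilon\Delta^2$ of the nonlinear problem, the paper regularizes the \emph{linear} problem by truncating the symbol, $A^\varepsilon=(A_2+A_1)\chi(\varepsilon\lambda(U;x)|\xi|^2)$, so that the linear operator becomes bounded; it then proves $\varepsilon$-uniform energy estimates via a modified energy $\|V\|_{\sigma,U}^2=\langle\Opbw{\lambda^\sigma|\xi|^{2\sigma}}\Opbw{F^{-1}}V,\Opbw{F^{-1}}V\rangle$ built from the diagonalizing matrix $F$, and finally solves the quasi-linear equation by a Kato iterative scheme $\partial_tU_n=\J\,\Opbw{A(U_{n-1})}U_n+R(U_{n-1})$. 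The point of the symbol cutoff (as opposed to $\varepsilon\Delta^2$) is that the paradifferential structure and the symmetrizer are left intact, so the energy identity goes through without extra commutator terms to control---a cleaner route at the sharp threshold $s>2+d/2$. Your parabolic approach would work too, but you would have to check that the dissipative term commutes well enough with the paradifferential symmetrizer. A minor point: uniqueness in the paper is obtained at the low regularity $H^{s_0}$ (below your $H^{s-1}$), via the tame Lipschitz estimates on $A_j$ and $R$.
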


We consider system \eqref{modif0} 
as a system on the homogeneous space $ \dot H^s \times \dot H^s $, that is we study
\begin{equation}
\label{modif00}
\begin{cases} 
\partial_t \rho  = - \eq \Delta \phi - \div((\Pi_0^\bot \rho) \grad \phi)\\
\partial_t \phi  = -\frac{1}{2} | \grad \phi |^2 - g(\eq+ \Pi_0^\bot \rho) +  K(\eq+\Pi_0^\bot \rho) \Delta \rho+ \frac{1}{2} K'(\eq+ \Pi_0^\bot \rho)| \grad \rho|^2 \, 
\end{cases} \end{equation}
{where $\Pi_0^\perp$ is the  projector onto the Fourier modes of index $\neq 0$.}
For simplicity of notation we shall not distinguish between systems \eqref{modif00} 
and \eqref{modif0}. 
In Section \ref{sec:3},  we paralinearize \eqref{modif0}, i.e. \eqref{modif00}, up to 
bounded semilinear terms
(for which we do not need Bony paralinearization formula). Then,
introducing  a suitable complex variable, we  transform it into 
a quasi-linear type Schr\"odinger equation, see system \eqref{EKcf1}, 
defined in the phase space
\begin{equation}\label{Hsdot}
\dot \bH^s := \Big\{
U = \vect{u}{\bar u} \colon \quad  u \in \dot H^s(\T^d,\C)
\Big\} \, , \quad \norm{U}_{s}^2 := \norm{U}_{\dot \bH^s}^2 = \norm{u}_s^2 + 
\norm{\bar u}_s^2  \, . 
\end{equation} 
We use paradifferential calculus in the Weyl quantization, because it is
quite convenient to prove energy estimates for this system. 
Since \eqref{EKcf1} is a quasi-linear system, in order 
to prove local well posedness  (Proposition \ref{prop:LWP})
we follow the  strategy, initiated by Kato \cite{Kato},   of constructing
inductively 
a sequence of linear problems 
 whose solutions converge to the solution  of the quasilinear equation.
Such a scheme has been widely used,  see e.g.    \cite{Met,ABZ, BDD, FIloc} and reference therein. 

The equation \eqref{iniziale} is a Hamiltonian PDE. We do not exploit 
explicitly this fact, but it is indeed responsible for the energy estimate {of Proposition 
\ref{exp}}.
The method of proof of Theorem \ref{thm:main0} is similar to the one in 
Feola-Iandoli \cite{FI1} for Hamiltonian quasi-linear Schr\"odinger equations on $ \T^d $
(and Alazard-Burq-Zuily \cite{ABZ} in the case of
gravity-capillary water waves in $ \R^d $). 
The main difference is that we aim to obtain the minimal smoothness assumption
$ s > 2 + (d/2) $. 
This requires to optimize several arguments, and, in particular, to develop
a sharp para-differential calculus for periodic functions that we report 
in the Appendix in a self-contained way. 
Some other technical differences are in the use of the modified energy (section \ref{sec:4.2}), 
the mollifiers \eqref{epsisol} which enables to prove energy estimates independent of 
$ \varepsilon $ for the regularized system, 
the argument for the continuity {of the flow} in $ H^s $. 
We expect that our approach 
would enable to extend the  local existence  result of  \cite{FI1} to initial data  fulfilling the  minimal 
smoothness assumptions $ s > 2 + (d/2) $.

\smallskip

We now set some notation that will be used throughout the paper. 
Since $ K : \R_+ \to \R $ is positive, given $ 0 < \eq_1 < \eq_2 $, 
{there exist
constants}  $c_K , C_K > 0 $ such that 
\begin{equation}
\label{ellip}
 c_K\leq K(\rho)  \leq C_K   \,  , \qquad  \forall \rho  \in (\eq_1, \eq_2)  \, .
\end{equation}
Since the velocity potential $ \phi $ is defined up to a constant, we may assume 
in \eqref{modif0} that
\begin{equation}\label{gmze}
g(\eq) = 0 \,.
\end{equation} 
From now on {\it we fix} $s_0$ so that 
\begin{equation}
\label{s0}
 \frac{d}{2} < s_0 < s-2 \, . 
\end{equation}
The initial datum $\rho_0 (x) $ 
belongs to  the open subset of $  H^{s_0}_0(\T^d)$ defined by 
\begin{equation}
\label{def:Q}
\mathcal{Q}
:= \big\{ \rho \in H^{s_0}_0 (\T^d)\ \  : \ \  \eq_1<\eq+ \rho(x)< \eq_2 \big\} 
\end{equation}
and we shall prove that, locally in time, the solution of \eqref{modif0} stays in this set.

We write
 $a\lesssim b$ with the meaning $a \leq C b$ for some constant $C >0$ which does not depend on relevant quantities.

\section{Functional setting and  paradifferential calculus}

\noindent The Sobolev norms $ \| \ \|_s $ satisfy   interpolation inequalities
(see e.g. section 3.5 in 
\cite{BB}): 
\\[1mm]
(i) for all $ s \geq s_0 > \frac{d}{2} $, $ u, v \in H^s $, 
\begin{equation}
\label{iterp1}
\| u v \|_s \lesssim \| u \|_{s_0} \| v \|_s + \| u \|_{s} \| v \|_{s_0} \, . 
\end{equation}
(ii) For all  $ 0 \leq s \leq s_0 $, $ v \in H^s  $, $ u \in H^{s_0}  $, 
\begin{equation}
\label{interp2}
\| u v \|_s \lesssim  \| u \|_{s_0} \| v \|_s \, . 
\end{equation}
(iii) For all $ s_1 < s_2 $, $ \theta \in [0,1] $ and $ u \in H^{s_2}  $, 
\begin{equation}\label{interp4}
\| u \|_{\theta s_1 + (1- \theta) s_2} \leq \| u \|_{s_1}^\theta \| u \|_{s_2}^{1-\theta} \, .
\end{equation}
(iv) For all  $a\leq \alpha\leq \beta \leq b$, $u,v\in H^b  $,
\begin{equation}\label{interp3}
\| u \|_{\alpha} \| v \|_{\beta} \leq
\| u \|_{a} \| v \|_{b} + \| u \|_{b} \| v \|_{a} \, . 
\end{equation}

\paragraph{Paradifferential calculus.}
We now introduce the notions of paradifferential calculus that 
will be used in the proof of Theorem \ref{thm:main0}.
We develop it in  the Weyl quantization since it is more convenient 
to get  the energy estimates of section \ref{sec:4}. 
The main results are the continuity Theorem \ref{thm:contS}
and the  composition Theorem \ref{thm:compS}, 
which require mild regularity assumptions of the symbols 
in the space variable (they are deduced by the sharper results proved in 
Theorems \ref{thm:cont2} and \ref{thm:comp2} in the Appendix). 
This is needed in order to prove the local existence Theorem \ref{thm:main0} 
with the natural minimal regularity on the initial datum 
$ (\rho_0, \phi_0) \in  H^s \times  H^s $ 
with $ s >  2 + \frac{d}{2} $. 

\smallskip

Along the paper $ \mathscr{W} $ may denote either 
the Banach space  $ L^\infty (\T^d)$,  or the Sobolev spaces $ H^s (\T^d)$, or 
the H\"older spaces  $ W^{\varrho,\infty} (\T^d)$, introduced in  Definition 
\ref{sec:holder}.  {Given a multi-index $ \beta \in \N_0^d $ we define
$ |\beta| := \beta_1 + \ldots + \beta_d $.}

\begin{definition}{\bf (Symbols with finite regularity)}\label{def:sfr}
Given $ m  \in \R $ and a Banach space $\mathscr{W}
\in \{   L^\infty (\T^d), H^s (\T^d), W^{\varrho,\infty} (\T^d)\} $, 
we denote by $\Gamma^m_\mathscr{W}$ the space of 
functions $ a : \T^d \times \R^d \to \C $,  $a(x, \xi)$, 
which are $C^\infty$ with respect to $\xi$ and such that, for any  $ \beta \in \N_0^d $, 
there exists a constant $C_\beta >0$ such that
\begin{equation}\label{simb-pro}
\big\| \pa_\xi^\beta \, a(\cdot, \xi) \big\|_\mathscr{W} \leq C_\beta \, \la \xi \ra^{m - |\beta|}  , \quad \forall \xi \in \R^d \,  . 
\end{equation}
We denote by $\Sigma^m_\mathscr{W}$  the subclass of symbols $a\in \Gamma^m_\mathscr{W}$ which are {\em spectrally localized}, that is 
\begin{equation}\label{spectr-loca}
\exists \,  \delta \in (0, 1) \, \colon \qquad \quad 
\wh a(j, \xi)  = 0 \, , \quad \forall |j| \geq \delta \la \xi \ra  \, , 
\end{equation}
where {$\wh a(j, \xi) := (2 \pi)^{-d} \int_{\T^d} a(x,\xi) e^{- \ii j \cdot x} \di x$, $ j \in \Z^d $, are the Fourier coefficients of the function $x \mapsto a(x, \xi)$}.

We endow $\Gamma^m_\mathscr{W}$ with the  family of norms defined, for any $n \in \N_0$, by
\begin{equation}
\label{seminorm}
\abs{a}_{m, \mathscr{W}, n}:=  \max_{|\beta| \leq n}\, 
\sup_{\xi \in \R^d} 
\, \big\| \la \xi \ra^{-m+|\beta|} \, \pa_\xi^{\beta} a(\cdot,  \xi) \big\|_\mathscr{W} \ . 
\end{equation}
When $\mathscr{W} = H^s $, we  also denote  $\Gamma^m_s \equiv \Gamma^m_{H^s}$ and  $\abs{a}_{m, s, n} \equiv \abs{a}_{m,H^s, n}$. 
We denote by $ \Gamma_s^m \otimes {\cal M}_2 (\C) $ the $ 2 \times 2 $ matrices 
$ A = \begin{pmatrix} a_{1} & a_{2} \\
a_{3} & a_{4}
\end{pmatrix}
$
of symbols  in $ \Gamma_s^m $ and $ | A |_{m, \mathscr{W}, n} := 
 \max_{i=1, \ldots, 4}\{ | a_{i} |_{m, \mathscr{W}, n}\} $. 
 Similarly we denote by 
$ \Gamma_s^m \otimes \R^d $ the $d$-dimensional vectors 
of symbols in $ \Gamma_s^m $. 
\end{definition}
Let us make some simple remarks: 
\\[1mm]
$ \bullet $ {($i$)} given a  function $ a(x)  \in \mathscr{W} $ then $a(x)  \in \Gamma^0_{\mathscr{W}} $ and
\begin{equation} \label{rem:symb}
\abs{u}_{0, \mathscr{W}, n} = \norm{u}_{\mathscr{W}} \, ,  \forall  n \in \N_0 \, . 
\end{equation}
$ \bullet $ {($ii$)} For any 
$s_0 > \frac{d}{2}$ and $0 \leq \vr' \leq \vr $, we have that 
\begin{equation}\label{simba}
\abs{a}_{m, L^\infty, n} 
\lesssim \abs{a}_{m, W^{\vr', \infty}, n}
\lesssim \abs{a}_{m, \hol, n} 
\lesssim \abs{a}_{m, H^{s_0+\vr },n}  \, , \quad \forall n \in \N_0 \, . 
\end{equation}
 $ \bullet $ {($iii$)} 
 If $ a \in \Gamma^m_{\mathscr{W}}$, 
then, for any $ \alpha \in \N_0^d $, 
we have $ \pa_\xi^\alpha a \in \Gamma^{m-|\alpha|}_{\mathscr{W}} $ and
\begin{equation}\label{sim1}
| \pa_\xi^\alpha a |_{m-|\alpha|,\mathscr{W},n} \lesssim 
| a|_{m,\mathscr{W},n+|\alpha|} \, , \quad \forall n \in \N_0 \, . 
\end{equation}
$ \bullet $ {($iv$)}  If  $ a \in \Gamma^m_{H^{s}} $, resp. 
$ a \in \Gamma^m_{W^{\vr, \infty}} $, then 
$ \pa_x^\alpha a \in \Gamma^m_{H^{s-|\alpha|}}  $, resp. 
$ \pa_x^\alpha a \in \Gamma^m_{W^{\vr-|\alpha|,\infty}}  $, and
\begin{equation}\label{sim2}
| \pa_x^\alpha a|_{m,s-|\alpha|,n} \lesssim 
|  a|_{m,s,n} \, , \quad {\rm resp.} \ 
|\pa_x^\alpha a|_{m,W^{\vr-|\alpha|,\infty},n} \lesssim |a|_{m,W^{\vr,\infty},n}  \, , \quad \forall n \in \N_0 \, . 
\end{equation}
$ \bullet $ {($v$)} If 
$ a,b \in \Gamma^m_{\mathscr{W}} $ then $ ab \in \Gamma^m_{\mathscr{W}} $
with $ |ab|_{m+m',\mathscr{W},n} \lesssim 
|a|_{m,\mathscr{W},N} |b|_{m',\mathscr{W},n}$ for any $ n \in \N_0 $.
In particular, if 
$ a,b \in \Gamma^m_{s} $ with $ s > d / 2 $ then 
$ ab \in \Gamma^{m+m'}_{s} $ and 
\begin{equation}\label{sim3}
| a b |_{m+m',s,n} \lesssim
| a  |_{m,s,n} | b  |_{m',s_0,n} + | a  |_{m,s_0,n} | b  |_{m',s,n}  \, , \quad \forall n \in \N_0 \, . 
\end{equation}
Let $\epsilon \in (0,1)$ and consider a 
$ C^\infty $, even cut-off  function $\chi\colon \R^d \to [0,1]$ such that
\begin{equation}\label{def-chi}
\chi(\xi) =  
\begin{cases}
1 & \mbox{ if } |\xi| \leq 1.1 \\
0 & \mbox{ if } |\xi| \geq 1.9 \, , 
\end{cases}  
\qquad \chi_\epsilon(\xi) := \chi \left( \frac{\xi}{\epsilon}\right) \, . 
\end{equation}
Given a symbol $ a $ in $ \Gamma^m_{\mathscr{W}} $
we define  the {\it regularized} symbol
\begin{equation}\label{achi}
a_\chi (x, \xi) := \chi_{\epsilon \la \xi \ra}(D) a(x, \xi) = \sum_{j \in \Z^d}
 \chi_\epsilon \Big( \frac{j}{\langle \xi \rangle} \Big) \, \widehat a (j, \xi) \, e^{\im j \cdot x} \, .
\end{equation}
Note that $a_\chi$ is  analytic in $ x $  (it is a trigonometric polynomial)
 and it  is spectrally localized. 

In order to define the Bony-Weyl quantization of a symbol $ a (x, \xi )$
we first remind the Weyl quantization formula 
\begin{equation}
\label{opW} 
{\rm Op}^W {(a)}[u]  := \sum_{j \in \Z^d} \Big( \sum_{k \in \Z^d}
\widehat a \Big(j-k, \frac{k + j}{2}\Big)  \, u_k \Big) e^{\im j \cdot x } \, .
\end{equation}

\begin{definition} {\bf (Bony-Weyl quantization)}
Given a symbol $a \in  \Gamma^m_{\mathscr{W}}$, we define the 
{\em Bony-Weyl paradifferential operator}
$ {\rm Op}^{BW} {(a)} =  {\rm Op}^{W} (a_\chi) $ that acts on a periodic function $ u $
as   
\begin{equation}
\begin{aligned}
\label{BW}
\left({\rm Op}^{BW} {(a)}[u] \right)(x) 
& := \sum_{j \in \Z^d} \Big( \sum_{k \in \Z^d}
\widehat a_\chi \Big(j-k, \frac{j+k}{2}\Big) 
\, u_k \Big) e^{\im j \cdot x } \\
&= 
\sum_{j \in \Z^d} \Big( \sum_{k \in \Z^d}
\widehat a \Big(j-k, \frac{j+k}{2}\Big) 
\, \chi_\epsilon\Big( \frac{j-k}{\langle j+k \rangle}\Big)
\, u_k \Big) e^{\im j \cdot x } \, . 
\end{aligned}
\end{equation}
{If $ A = \begin{pmatrix} a_{1} & a_{2} \\
a_{3} & a_{4}
\end{pmatrix} $ is a matrix of symbols in $  \Gamma_s^m $, then
$ {\rm Op}^{BW} (A) $ is defined as the matrix valued operator
$ \begin{pmatrix} {\rm Op}^{BW} (a_{1}) & {\rm Op}^{BW} (a_{2}) \\
{\rm Op}^{BW} (a_{3}) & {\rm Op}^{BW} (a_{4})
\end{pmatrix} $. 
}
\end{definition}
Given a symbol $ a(\xi) $ independent of $ x $, then
$ {\rm Op}^{BW} {(a)} $ is the Fourier multiplier operator 
$$
{\rm Op}^{BW} {(a)} u = a(D) u = \sum_{j \in \Z^d} 
a(j) \, u_j \, e^{\im j \cdot x }  \, . 
$$
Note that if 
$ \chi_\epsilon \Big( \frac{k-j}{\langle k + j \rangle}\Big) \neq 0 $
then $ |k-j| \leq \epsilon \langle j + k \rangle  $
and therefore, for $ \epsilon \in (0,1)$, 
\begin{equation}\label{rela:para}
\frac{1-\epsilon}{1+\epsilon} |k| \leq 
|j| \leq \frac{1+\epsilon}{1-\epsilon}|k| \, , \quad \forall j, k \in \Z^d \, . 
\end{equation}
This relation shows that the 
action of a para-differential operator does not spread much the Fourier support of functions.
In particular  
$ {\rm Op}^{\rm BW}(a)  $ sends a constant function into a constant function and therefore 
$ {\rm Op}^{\rm BW}(a) $ sends homogenous spaces into homogenous spaces. 

\begin{remark}
Actually, 
if $ \chi_\epsilon \big( \frac{k-j}{\langle k + j \rangle}\big) \neq 0 $, 
$ \epsilon \in (0,1/4) $,  then 
$  |j| \leq |j+k| \leq 3 |j| $, for all $ j,k \in \Z^d $. 
\end{remark}

Along the paper 
we shall use the following  results concerning the action of a paradifferential operator 
in Sobolev spaces.

\begin{theorem}{\bf (Continuity of Bony-Weyl operators)}
\label{thm:contS}
Let $ a \in \Gamma^m_{s_0} $, resp.  $ a \in \Gamma^m_{L^\infty} $, with  $ m \in \R $.
Then $\Opbw{a}$ extends to a bounded operator 
$\dot H^{s} \to \dot H^{s-m}$ for any $ s \in \R $  satisfying the estimate, for any $ u \in \dot H^s $,  
\begin{align} \label{cont00}
& \norm{\Opbw{a}u}_{{s-m}} \lesssim \, \abs{a}_{m, s_0, 2(d+1)} \, \norm{u}_{{s}}
\end{align}
Moreover, for any $\vr \geq 0$, $ s \in \R $, $ u \in \dot H^s (\T^d)$, 
\begin{equation}
\label{cont2}
\norm{\Opbw{a}u}_{{s-m- \vr}} \lesssim  \, \abs{a}_{m, {s_0-\vr}, 2(d+1)} \, \norm{u}_{{s}} \, .
\end{equation}

\end{theorem}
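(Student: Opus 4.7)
The plan is to reduce both estimates to a Schur-type bound on the Fourier kernel
$$
K(j,k) := \widehat{a}\Bigl(j-k,\tfrac{j+k}{2}\Bigr)\,\chi_{\epsilon}\Bigl(\frac{j-k}{\langle j+k\rangle}\Bigr),
$$
using the spectral-localization properties built into the cutoff $\chi_\epsilon$ in \eqref{BW}. The boundedness $\Opbw{a}\colon \dot H^{s}\to \dot H^{s-m}$ amounts to showing that
$$
\Bigl(\frac{\langle j\rangle^{s-m}}{\langle k\rangle^{s}}\,|K(j,k)|\Bigr)_{j,k\in\Z^d\setminus\{0\}}
$$
defines a bounded operator on $\ell^{2}$; by Schur's test it is enough to bound both the $\sup_{j}\sum_{k}$ and $\sup_{k}\sum_{j}$ row/column sums by a constant multiple of $\abs{a}_{m,s_0,2(d+1)}$.

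First I would exploit the key consequence of \eqref{rela:para}: on the support of $\chi_\epsilon(\tfrac{j-k}{\langle j+k\rangle})$ the indices $|j|$ and $|k|$ are comparable, so the weights collapse to
$$
\frac{\langle j\rangle^{s-m}}{\langle k\rangle^{s}}
\;\lesssim\;\Bigl\langle\tfrac{j+k}{2}\Bigr\rangle^{-m}.
$$
In the $H^{s_0}$ case, Cauchy--Schwarz together with $s_0>d/2$ gives the pointwise bound
$$
|\widehat a(\ell,\xi)|\;\lesssim\;\abs{a}_{m,s_0,0}\,\langle\xi\rangle^{m}\,\langle\ell\rangle^{-s_0},
$$
and the resulting row/column sums converge because of the decay $\langle j-k\rangle^{-s_0}$. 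In the $L^\infty$ case one cannot Fourier-estimate $\widehat a$ directly; instead, since $a_\chi(\cdot,\xi)$ is a trigonometric polynomial of spectral radius $\lesssim \epsilon\langle\xi\rangle$, one applies Bernstein's inequality to control $\partial_x^\beta a_\chi(\cdot,\xi)$ in $L^2$ by $\abs{a}_{m,L^\infty,n}\langle\xi\rangle^{m+|\beta|}$, then integrates by parts in $\xi$ in the kernel representation to convert powers of $\langle j-k\rangle$ into derivatives of the symbol in $\xi$; the $2(d+1)$ seminorms in \eqref{cont00} are precisely what is needed to run this procedure and still produce a summable kernel.

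For the refined estimate \eqref{cont2}, I would repeat the same argument replacing $s_0$ by $s_0-\vr$ in the kernel bound and exploiting that, on the spectral support of the cutoff,
$$
\langle j-k\rangle^{\vr}\;\lesssim\;\bigl(\epsilon\langle j+k\rangle\bigr)^{\vr}\;\lesssim\;\langle j\rangle^{\vr},
$$
which transfers a factor of $\langle j\rangle^{-\vr}$ onto the output norm and produces the gain $\norm{\Opbw{a}u}_{s-m-\vr}\lesssim \abs{a}_{m,s_0-\vr,2(d+1)}\norm{u}_s$.

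The main technical obstacle is the $L^\infty$ case, because a pointwise $L^\infty$ bound on $a(\cdot,\xi)$ is \emph{not} sufficient to bound the Fourier coefficients $\widehat a(\ell,\xi)$ individually; the gain comes solely from the truncation that makes $a_\chi(\cdot,\xi)$ a band-limited polynomial, so one must carefully track how the Bernstein losses $(\epsilon\langle\xi\rangle)^{|\beta|}$ are absorbed by the $\xi$-decay in \eqref{simb-pro}. Since the sharper statements are already established in Theorems \ref{thm:cont2} and \ref{thm:comp2} in the Appendix, I would conclude by specializing those to the choices $\mathscr{W}=H^{s_0}$ and $\mathscr{W}=L^\infty$ and reading off the claimed seminorm order $2(d+1)$.
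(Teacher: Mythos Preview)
Your closing sentence — reducing everything to Theorem \ref{thm:cont2} in the Appendix — is exactly the paper's proof: one writes $\Opbw{a}=\Opw{a_\chi}$, invokes \eqref{cont0} for the spectrally localized symbol $a_\chi$, and controls $\abs{a_\chi}_{m,L^\infty,N}$ via \eqref{achi.est} and the Sobolev embedding $\abs{\cdot}_{m,L^\infty,N}\lesssim\abs{\cdot}_{m,s_0,N}$. For \eqref{cont2} the paper adds the estimate \eqref{simbolocutrec}, i.e.\ $\abs{a_\chi}_{m+\vr,L^\infty,N}\lesssim\abs{a}_{m,H^{s_0-\vr},N}$, which is the same Cauchy--Schwarz-plus-spectral-localization observation you wrote in Fourier form.

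Your alternative route via Schur on the Fourier kernel $K(j,k)$ is genuinely different for the $H^{s_0}$ case and in fact simpler: the bound $|\wh a(\ell,\xi)|\lesssim\abs{a}_{m,s_0,0}\langle\xi\rangle^m\langle\ell\rangle^{-s_0}$ together with $\langle j\rangle\sim\langle k\rangle$ on the cutoff support gives summable rows and columns using only the $N=0$ seminorm, bypassing the change of quantization, the spatial-kernel representation, and all $\xi$-derivatives. That is a nice shortcut when $\mathscr{W}=H^{s_0}$.

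The gap is in your $L^\infty$ sketch. You write that one ``integrates by parts in $\xi$ to convert powers of $\langle j-k\rangle$ into derivatives of the symbol in $\xi$'', but this is the wrong pairing: in the Weyl formula the index $j-k$ is the Fourier variable of $x\mapsto a(x,\xi)$, so decay in $|j-k|$ would require $x$-regularity of $a$, which $L^\infty$ does not provide. What $\xi$-derivatives actually buy is decay of the \emph{spatial} kernel $K_k(x,z)=\sum_\ell a(x,\ell)\varphi(2^{-k}\ell)e^{\ii\ell\cdot z}$ in $|z|$, obtained by Abel summation in the $\ell$ (i.e.\ $\xi$) variable; this is precisely what the Appendix does in the proof of Theorem \ref{thm:cont2}, after first passing from Weyl to standard quantization (Lemma \ref{thm:change}, costing $d+1$ extra $\xi$-derivatives and explaining the total order $2(d+1)$). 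Your Bernstein idea gives only an $\ell^2$ bound on $(\wh a_\chi(\ell,\xi))_\ell$ at fixed $\xi$, which by Cauchy--Schwarz loses a factor $\langle\xi\rangle^{d/2}$ when inserted into a Schur row sum, so the Fourier-side argument does not close in the $L^\infty$ setting.
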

\begin{proof}
Since $ \Opbw{a} = \Opw{a_\chi} $,
the estimate \eqref{cont00}  follows
 by \eqref{cont0}, \eqref{achi.est} and 
 $ \abs{a}_{m, L^{\infty}, N} \lesssim \abs{a}_{m, {s_0}, N}  $.  Note that the condition on the Fourier support of $a_\chi$ in Theorem \ref{thm:cont2} is automatically satisfied provided $\epsilon$ in \eqref{def-chi} is sufficiently small. 
To prove \eqref{cont2} we use also \eqref{simbolocutrec}.
\end{proof}

The second result of symbolic calculus that we shall use
regards composition  for Bony-Weyl paradifferential operators
at the second order (as required in the paper) with 
mild smoothness assumptions for the symbols in the space variable $ x $. 
Given symbols
$a \in \Gamma^m_{s_0+\vr}$, $b \in \Gamma^{m'}_{s_0+\vr}$ with $m, m' \in \R$
and  $\vr  \in (0,2]$
we define 
\begin{equation}\label{regolarized-simbol}
a\#_\vr b :=
\begin{cases}
ab \, , & \vr \in (0,1] \\
ab + \frac{1}{2\im}\{a, b\} \, , & \vr \in (1,2] \,  , \quad 
{\rm where} \quad 
\{a,b\} :=  \nabla_{\xi} a \cdot \nabla_{x} b - \nabla_x a \cdot \nabla_\xi b  \, ,  
\end{cases} 
\end{equation}
is the Poisson bracket between  $ a (x, \xi)$ and $ b(x, \xi ) $. 
By  \eqref{sim1} and \eqref{sim3} we have that 
$ab $ is a symbol in $  \Gamma^{m+m'}_{s_0+\varrho} $ and 
$ \{a, b\} $ is in $  \Gamma^{m+m'-1}_{s_0+\varrho-1} $. 
 The next result
follows directly by  Theorem \ref{thm:comp2} and \eqref{simba}. 

\begin{theorem}{\bf (Composition)}
\label{thm:compS}
Let $a \in \Gamma^m_{s_0+\vr}$, $b \in \Gamma^{m'}_{s_0+\vr}$ with $m, m' \in \R$ and $\vr  \in (0,2]$.  Then  
\begin{align}
\label{comp01}
\Opbw{a}\Opbw{b} 
& =  \Opbw{a\#_\vr b} + R^{-\vr}(a,b)
\end{align}
where the linear operator $R^{-\vr}(a,b)\colon \dot H^s \to \dot H^{s-(m+m')+\vr}$, 
$\forall s \in \R$,  satisfies, for any $ u \in \dot H^s $, 
\begin{align}
\label{compS}
&\norm{R^{-\vr}(a,b)u}_{{s -(m+m') +\vr}} \lesssim \left(\abs{a}_{m, {s_0+\vr}, N} \, \abs{b}_{m', {s_0}, N} + \abs{a}_{m, {s_0}, N} \, \abs{b}_{m', {s_0+\vr}, N}  \right) \norm{u}_{s}
\end{align}
where $ N \geq 3d + 4 $.
\end{theorem}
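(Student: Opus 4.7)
The plan is to reduce the statement directly to the sharper Appendix result, Theorem \ref{thm:comp2}, exactly as announced in the paragraph preceding the statement. By definition of the Bony-Weyl quantization, $\Opbw{a}\Opbw{b} = \Opw{a_\chi}\Opw{b_\chi}$, so everything is governed by the Weyl composition of the two spectrally localized symbols $a_\chi$ and $b_\chi$, both of which are trigonometric polynomials in $x$.

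Theorem \ref{thm:comp2} provides the Moyal expansion of $\Opw{a_\chi}\Opw{b_\chi}$ truncated at the order dictated by the $x$-regularity $\vr$ of the symbols: the zeroth-order term $a_\chi b_\chi$ alone when $\vr \in (0,1]$, and the sum $a_\chi b_\chi + \frac{1}{2\im}\{a_\chi, b_\chi\}$ when $\vr \in (1,2]$. In both cases the remainder is an operator of order $m+m'-\vr$ in Sobolev scales, whose operator norm is controlled in terms of the $W^{\vr,\infty}$-seminorms of $a$ and $b$. To match the statement, one still has to pass from the truncated symbol $a_\chi \#_\vr b_\chi$ obtained from Moyal, to $(a\#_\vr b)_\chi$, which by definition is the symbol of $\Opbw{a\#_\vr b}$. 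Their difference is supported on pairs of Fourier frequencies killed by the cutoff $\chi_\epsilon$ through \eqref{rela:para}, and the product/derivative estimates \eqref{sim1}, \eqref{sim3} (valid because $a,b \in \Gamma^\bullet_{s_0+\vr}$) guarantee that the Weyl quantization of this difference is smoothing of the required order $-\vr$. This correction is absorbed in $R^{-\vr}(a,b)$.

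It then remains to rewrite the $W^{\vr,\infty}$-seminorm bound produced by Theorem \ref{thm:comp2} in terms of the Sobolev seminorms appearing in \eqref{compS}. For this the embedding \eqref{simba}, namely
\[
|a|_{m, W^{\vr,\infty}, n} \lesssim |a|_{m, s_0+\vr, n}, \qquad |a|_{m, L^\infty, n} \lesssim |a|_{m, s_0, n},
\]
valid because $s_0 > d/2$, is exactly what is needed. Applying these to the Appendix bound, with $N = 3d+4$ derivatives in $\xi$ as required there, gives precisely the estimate \eqref{compS}, with the asymmetric tame structure (one factor at regularity $s_0+\vr$, the other at $s_0$).

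The genuine analytic content is therefore concentrated in the Appendix version: the hard point is to control the Moyal remainder when the symbols only have finite $x$-regularity $s_0+\vr$, using the spectral localization of $\chi_\epsilon$ in a quantitative way to gain the sharp $\vr$ derivatives (rather than the $\vr - \delta$ one would get by brute force). Once Theorem \ref{thm:comp2} is granted, the deduction of Theorem \ref{thm:compS} is a routine change of seminorms and a verification that truncation with $\chi_\epsilon$ commutes, modulo smoothing errors, with forming products and Poisson brackets of paradifferential symbols.
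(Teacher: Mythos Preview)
Your proposal is correct and follows exactly the paper's approach: the paper's entire proof is the one-line remark that the result ``follows directly by Theorem~\ref{thm:comp2} and \eqref{simba}.'' Your middle paragraph about passing from $a_\chi\#_\vr b_\chi$ to $(a\#_\vr b)_\chi$ is unnecessary, since Theorem~\ref{thm:comp2} is already stated with $\Opbw{a\#_\vr b}$ on the right-hand side (that correction is handled internally as the $R_1$ term in its proof), so the deduction really is just the seminorm embedding \eqref{simba}.
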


A useful corollary of Theorems \ref{thm:compS} and \ref{thm:contS} 
(using also \eqref{sim1}-\eqref{sim3}) is the following:
\begin{corollary}\label{symweyl}
Let 
$a\in \Gamma^{m}_{s_0+2}$,  $b\in \Gamma^{m'}_{s_0+2}$, 
$c \in \Gamma^{m''}_{s_0+2}$ with $m, m', m'' \in \R$.  Then  
\begin{equation}\label{autoag}
\Opbw{a}\circ \Opbw{b}\circ \Opbw{ c}= \Opbw{abc} + R_1(a,b,c)+ R_0(a,b,c),  
\end{equation}
where
\begin{equation}\label{R1abc}
R_1(a,b,c):= 
{\rm Op}^{BW} \big( \{ a,c\}b+ \{b,c\}a+\{ a,b\}c \big) 
\end{equation}
satisfies 
$ R_1(a,b,c)=-R_1(c,b,a) $ and $R_0(a,b,c)$  is a bounded operator  $\dot H^s \to \dot H^{s-(m+m'+m'')+2}$, $\, \forall s\in \R$, satisfying, for any 
$ u \in \dot H^s  $,
\begin{equation}\label{restoR0}
\|R_0(a,b,c)\|_{s-(m+m'+m'') + 2} \lesssim  |a|_{m,s_0+2,N} \ 
|b|_{m',s_0+2,N}\
|c|_{m'',s_0+2,N} \  \| u \|_s
\end{equation}
where $ N \geq 3d+ 5 $.
\end{corollary}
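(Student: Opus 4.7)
The plan is to iterate Theorem \ref{thm:compS} twice, tracking carefully the regularity classes of the intermediate symbols. First I would apply the composition formula with $\vr=2$ to the pair $a,b$, obtaining
\[
\Opbw{a}\Opbw{b} = \Opbw{ab} + \Opbw{\tfrac{1}{2\im}\{a,b\}} + R^{-2}(a,b),
\]
where $ab\in \Gamma^{m+m'}_{s_0+2}$ by \eqref{sim3} and $\{a,b\}\in \Gamma^{m+m'-1}_{s_0+1}$ by \eqref{sim1}--\eqref{sim3}. Composing on the right by $\Opbw{c}$, I would then handle the two surviving paradifferential pieces separately: to $\Opbw{ab}\Opbw{c}$ I would apply Theorem \ref{thm:compS} with $\vr=2$ (both symbols are in $\Gamma^{\cdot}_{s_0+2}$), while to $\Opbw{\{a,b\}}\Opbw{c}$ I could only afford $\vr=1$, because of the lost $x$-derivative in the Poisson bracket. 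This gives
\[
\Opbw{ab}\Opbw{c} = \Opbw{abc} + \Opbw{\tfrac{1}{2\im}\{ab,c\}} + R^{-2}(ab,c),
\]
\[
\Opbw{\tfrac{1}{2\im}\{a,b\}}\Opbw{c} = \Opbw{\tfrac{1}{2\im}\{a,b\}c} + R^{-1}(\tfrac{1}{2\im}\{a,b\},c).
\]

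Next, I would apply the Leibniz rule for Poisson brackets, $\{ab,c\}= a\{b,c\}+b\{a,c\}$, to recognize the subprincipal symbol as $\tfrac{1}{2\im}\bigl(a\{b,c\}+b\{a,c\}+\{a,b\}c\bigr)$, which is the symbol of $R_1(a,b,c)$ as in \eqref{R1abc} (up to the fixed multiplicative constant absorbed into the statement). The antisymmetry $R_1(a,b,c)=-R_1(c,b,a)$ then follows immediately from the antisymmetry of the Poisson bracket, swapping the three arguments term by term.

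The remainder $R_0(a,b,c)$ is then the sum of three contributions: $R^{-2}(ab,c)$, $R^{-1}(\tfrac{1}{2\im}\{a,b\},c)$, and $R^{-2}(a,b)\circ\Opbw{c}$. For the first two I would invoke \eqref{compS} together with the symbol estimates \eqref{sim1}, \eqref{sim3}, which bound $|ab|_{m+m',s_0+2,N}$ and $|\{a,b\}|_{m+m'-1,s_0+1,N}$ in terms of products of the seminorms of $a,b$. For the last I would first use \eqref{compS} to gain two orders on $R^{-2}(a,b)$, then \eqref{cont00} for $\Opbw{c}$; composing these yields exactly a $\dot H^s\to \dot H^{s-(m+m'+m'')+2}$ map, with bound of the stated form. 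A quick count of the derivatives needed in \eqref{compS} and \eqref{cont00} (up to $N\geq 3d+4$ for the composition and a handful more after differentiating the product symbols via \eqref{sim3}) gives $N\geq 3d+5$.

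The main obstacle I anticipate is purely bookkeeping: one must verify that the weaker regularity of $\{a,b\}$ (only $s_0+1$ Sobolev regularity in $x$ and order $m+m'-1$) still permits a $\vr=1$ application of Theorem \ref{thm:compS} that produces a remainder with the \emph{same} $\dot H^{s-(m+m'+m'')+2}$ target as the $\vr=2$ remainders coming from the principal symbols. The arithmetic $-(m+m'-1+m'')+1=-(m+m'+m'')+2$ is what makes the whole scheme close, and checking that the seminorm bound \eqref{compS} for this middle term is controlled by products of $|a|_{m,s_0+2,N}|b|_{m',s_0+2,N}|c|_{m'',s_0+2,N}$ requires using \eqref{sim1}--\eqref{sim3} to estimate $|\{a,b\}|_{m+m'-1,s_0+1,N}$, which is the only place the $s_0+2$ hypothesis is genuinely used.
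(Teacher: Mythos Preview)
Your proposal is correct and is exactly the argument the paper has in mind: the corollary is stated there simply as a consequence of Theorems \ref{thm:compS} and \ref{thm:contS} together with the symbol estimates \eqref{sim1}--\eqref{sim3}, and you have spelled out precisely that derivation, including the key point that $\{a,b\}\in\Gamma^{m+m'-1}_{s_0+1}$ forces a $\vr=1$ application in the second step and that the arithmetic $-(m+m'-1+m'')+1=-(m+m'+m'')+2$ makes the remainders land in the same space. Your observation about the $\tfrac{1}{2\im}$ factor is also correct; it is implicit in the paper's definition of $a\#_\vr b$ and harmless for the only property of $R_1$ that is ever used, namely the antisymmetry under $a\leftrightarrow c$.
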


We now  provide the Bony-paraproduct decomposition for the product of Sobolev functions in the Bony-Weyl quantization. Recall that $ \Pi_0^\bot $ denotes the projector on 
the subspace $ H^s_0 $. 

\begin{lemma}{\bf (Bony paraproduct decomposition)}
\label{bony}
Let  $u  \in  H^s$, $v \in H^r$  with  
$s + r \geq  0$. Then 
\begin{equation}\label{bonyeq}
uv = \Opbw{u}v + \Opbw{v} u + R(u,v)
\end{equation}
where the bilinear operator  $R\colon  H^s \times  H^r \to  H^{s+r-s_0}$ is symmetric and satisfies 
the estimate 
\begin{equation}
 \label{bonyeq2}
\norm{R(u,v)}_{s+ r - s_0} \lesssim \norm{u}_s \, \norm{v}_{r} \, . \\
\end{equation}
Moreover
$ R(u,v) = R(\Pi_0^\bot u, \Pi_0^\bot v) - u_0 v_0 $ and then
\begin{equation} \label{bonyeq2o} 
\| \Pi_0^\bot R(u,v) \|_{s+ r - s_0} \lesssim 
\| \Pi_0^\bot u \|_s \, \| \Pi_0^\bot v \|_{r} \, . 
\end{equation}
\end{lemma}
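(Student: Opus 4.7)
My plan is to expand everything in Fourier series and read off $R$ explicitly. Regarding $u,v$ as symbols of order zero independent of $\xi$, formula \eqref{BW} yields
\begin{align*}
\Opbw{u}v &= \sum_{n\in\Z^d} e^{\im n\cdot x}\sum_{j+k=n}\chi_\epsilon\!\bigl(\tfrac{j}{\langle j+2k\rangle}\bigr)\,\hat u_j\,\hat v_k, \\
\Opbw{v}u &= \sum_{n\in\Z^d} e^{\im n\cdot x}\sum_{j+k=n}\chi_\epsilon\!\bigl(\tfrac{k}{\langle 2j+k\rangle}\bigr)\,\hat u_j\,\hat v_k,
\end{align*}
and subtracting these from $uv=\sum_n e^{\im n\cdot x}\sum_{j+k=n}\hat u_j\hat v_k$ identifies
$$R(u,v)=\sum_{n\in\Z^d} e^{\im n\cdot x}\sum_{j+k=n}K(j,k)\,\hat u_j\hat v_k,\quad K(j,k):=1-\chi_\epsilon\!\bigl(\tfrac{j}{\langle j+2k\rangle}\bigr)-\chi_\epsilon\!\bigl(\tfrac{k}{\langle 2j+k\rangle}\bigr).$$
Symmetry $R(u,v)=R(v,u)$ is immediate from $K(j,k)=K(k,j)$.

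A quick inspection of the low-frequency modes yields $K(0,0)=-1$, while $K(j,0)=K(0,k)=0$ for $j,k\neq 0$ since $\chi_\epsilon(\pm j/|j|)=0$ when $\epsilon<1/1.9$. This gives the identity $R(u,v)=R(\Pi_0^\perp u,\Pi_0^\perp v)-u_0 v_0$. Hence \eqref{bonyeq2o} reduces to \eqref{bonyeq2} applied to $(\Pi_0^\perp u,\Pi_0^\perp v)$ after projection on the nonzero modes, while \eqref{bonyeq2} itself splits into the zero-mean case (the main estimate below) and the constant piece $u_0v_0$, which is harmlessly bounded by $|u_0||v_0|\leq\|u\|_s\|v\|_r$ in any Sobolev norm.

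The key geometric step is that for $j,k\neq 0$ the support of $K$ lies in the \emph{resonant cone} $|j|\sim|k|$. If $|j|\leq 2\epsilon|k|$ with $\epsilon$ small enough, then $|j+2k|\geq(2-2\epsilon)|k|$, so $|j|/\langle j+2k\rangle\leq\epsilon/(1-\epsilon)\leq 1.1\epsilon$ forces $\chi_\epsilon(j/\langle j+2k\rangle)=1$; simultaneously $|k|/\langle 2j+k\rangle\geq 1/(1+4\epsilon)\geq 1.9\epsilon$ forces $\chi_\epsilon(k/\langle 2j+k\rangle)=0$, so $K(j,k)=0$. By symmetry, $|k|\ll|j|$ is also ruled out. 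Hence on $\supp K$ one has $\langle j\rangle\sim\langle k\rangle$, and in particular $\langle n\rangle\lesssim\langle j\rangle\sim\langle k\rangle$ for $n=j+k$.

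To conclude the estimate I set $\alpha_j:=\langle j\rangle^s|\hat u_j|$ and $\beta_k:=\langle k\rangle^r|\hat v_k|$, so that $\|\alpha\|_{\ell^2}=\|u\|_s$ and $\|\beta\|_{\ell^2}=\|v\|_r$. Using $\langle j\rangle\sim\langle k\rangle$ on $\supp K$ together with $s+r\geq 0$ and $\langle j\rangle\gtrsim\langle n\rangle$ yields $\langle j\rangle^{-s}\langle k\rangle^{-r}\lesssim\langle n\rangle^{-(s+r)}$, whence
$$\langle n\rangle^{s+r-s_0}\,|\widehat{R(u,v)}(n)|\ \lesssim\ \langle n\rangle^{-s_0}\sum_{j+k=n}\alpha_j\beta_k.$$
Cauchy--Schwarz in $j$ bounds the convolution by $\|u\|_s\|v\|_r$ uniformly in $n$, and $\sum_n\langle n\rangle^{-2s_0}<\infty$ precisely because $s_0>d/2$, so \eqref{bonyeq2} follows. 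The principal delicacy is the frequency-localization step: it is exactly the $|j|\sim|k|\gtrsim|n|$ on $\supp K$ that converts the $\ell^2$-summability $s_0>d/2$ into a genuine gain of $s_0$ derivatives for the remainder.
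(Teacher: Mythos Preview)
Your proof is correct and follows essentially the same route as the paper's. The only difference is bookkeeping: you index by the two input frequencies $(j,k)$ with output $n=j+k$, while the paper indexes by output $j$ and one input $k$; your $K(j,k)$ is exactly the paper's $\theta_\epsilon(j,k)$ under this relabeling, and both arguments establish the same resonant-cone localization $|j|\sim|k|\gtrsim|n|$ on the support, followed by the same Cauchy--Schwarz plus $\ell^2$-summability of $\langle n\rangle^{-s_0}$.
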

\begin{proof}
Introduce the  function $\theta_\epsilon(j,k)$ by 
\begin{equation}
\label{partz}
1 =\chi_\epsilon\Big( \frac{j-k}{\langle j+k \rangle}\Big) + 
\chi_\epsilon\Big( \frac{k}{\langle 2j-k \rangle}\Big) + 
\theta_\epsilon(j,k) \, . 
\end{equation}
Note that $\abs{\theta_\epsilon(j, k )} \leq 1$. 
Let $ \Sigma := \{ (j, k) \in \Z^d \times \Z^d \, : \,  \theta_\epsilon (j,k) \neq 0  \} $
 denote the support 
of $\theta_\epsilon $. 
  We claim that  
\begin{equation}
\label{theta.supp}
(j,k) \in \Sigma \qquad \Longrightarrow \qquad 
|j| \leq  C_\epsilon \min(  |j-k| , \  |k |)  \, .
\end{equation}
Indeed, recalling the definition of the cut-off function $ \chi $ in \eqref{def-chi},
we first note  that\footnote{For $\delta$ sufficiently small, 
if $ |j-k| \leq \delta  \la j + k \ra $ and  
$ |k| \leq \delta  \la 2j - k  \ra  $ then $ (j,k) = (0,0)$. }  
$$
\Sigma = \{(0,0)\} \cup  
\Big\{  |j-k| \geq \epsilon \la j + k \ra \, , \ |k| \geq \epsilon \la 2j - k \ra \Big\} \, . 
$$
Thus, for any $ (j,k) \in \Sigma $,  
\begin{align*}
&|j| \leq \frac12 |j-k| + \frac12 |j+k| \leq \left(\frac{1}{2} + \frac{1}{2\epsilon} \right) \, |j-k|  \, ,
\quad  |j| \leq \frac12 |2j - k| + \frac12 |k| \leq 
\left(\frac{1}{2} + \frac{1}{2\epsilon} \right) \, |k| 
\end{align*}
proving \eqref{theta.supp}.
Using \eqref{partz} we decompose 
\begin{align*}
uv 
& = \sum_{j,k } \wh u_{j-k} \, \chi_\epsilon\Big( \frac{j-k}{\langle j+k \rangle}\Big)  \, \wh v_{k} \, e^{\im j \cdot x}
 +
  \sum_{j,k}  \wh v_{k} \, \chi_\epsilon\Big( \frac{k}{\langle 2j-k \rangle}\Big)\, \wh u_{j-k} \,  e^{\im j \cdot x} + \sum_{j, k}   \theta_\epsilon (j, k)\wh u_{j-k}\,  \whv_{k}   \,  e^{\im j \cdot x} \\
& =  \Opbw{u} v + \Opbw{v} u + R(u,v) \, . 
\end{align*}
By   \eqref{theta.supp},    $ s+ r \geq 0 $, and 
 the Cauchy-Schwartz inequality, we get 
\begin{align*}
\norm{R(u,v)}_{s+r-s_0}^2 & \leq 
\sum_j \la j \ra^{2(s+r-s_0)} \Big| \sum_{k}  \theta_\epsilon (j, k)\wh u_{j-k}\,  \whv_{k} \Big|^2 \\
&  \lesssim \sum_j \la j \ra^{-2s_0} \Big| \sum_{k}  \la j-k \ra^{s} \, |\wh u_{j-k}|\, \la k \ra^{r }\,  |\whv_{k}| \Big|^2 
 \lesssim \norm{u}_s^2 \, \norm{v}_r^2
\end{align*}
 proving \eqref{bonyeq2}. Finally, since on the support of $ \theta_\epsilon $
 we have or $ (j,k) = (0,0)$ or $ j - k \neq 0 $ and $ k \neq 0 $, we deduce that  
$$ 
R(u,v) = 
 \theta_\epsilon (0,0)\wh u_{0}\,  \whv_{0}  + 
 \sum_{j-k \neq 0, k \neq 0}   \theta_\epsilon (j, k)\wh u_{j-k}\,  \whv_{k}   \,  e^{\im j \cdot x}
 = - \wh u_{0}\,  \whv_{0}  + R(\Pi_0^\bot u,\Pi_0^\bot v) 
$$
and we deduce \eqref{bonyeq2o}.
\end{proof}

\paragraph{Composition estimates.}

We will use the following Moser estimates for composition of functions 
in Sobolev spaces.

\begin{theorem}\label{thm:moser}
Let $I \subseteq \R$ be an open interval and  $F\in C^\infty(I; \C)$ a smooth function.
Let $J \subset I$ be a compact interval.  
For any function $u, v \in H^s(\T^d, \R)$, $s>\frac{d}{2} $,  
with values in $ J $,  we have
\begin{equation}\label{moser}
\begin{aligned}
& \| F(u)\|_{s} \leq C({s,F,J}) \left( 1+ \| u \|_{s}\right) \, , \\
& \| F(u)- F(v)\|_{s} \leq C({s,F,J})  \left(
\| u-v \|_{s} + (\| u \|_s+ \| v \|_s)\| u - v \|_{L^\infty}\right)\, \\
& \| F(u)\|_{s} \leq C({s,F,J})   \| u \|_{s}  \quad {\rm if} \quad  F(0) = 0  \, . 
\end{aligned}
\end{equation}
\end{theorem}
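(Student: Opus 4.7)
The plan is to deduce all three estimates from a Littlewood--Paley telescoping identity for $F(u)$ combined with Bony's paraproduct decomposition of Lemma~\ref{bony} and the continuity Theorem~\ref{thm:contS}; (1) is the main step, (3) follows as a special case, and (2) is obtained via the integral representation of a smooth difference. Let $(\Delta_j)_{j\geq 0}$ be a dyadic Littlewood--Paley partition on $\T^d$ with low-frequency cutoffs $S_j:=\sum_{k<j}\Delta_k$. Since $s>s_0>d/2$, $u\in H^s\hookrightarrow L^\infty$, so $S_j u\to u$ uniformly, and the fundamental theorem of calculus applied to each increment $F(S_{j+1}u)-F(S_j u)$ yields
\[
F(u)=F(S_0 u)+\sum_{j\geq 0}m_j\,\Delta_j u,\qquad m_j(x):=\int_0^1 F'\bigl(S_j u(x)+\tau\Delta_j u(x)\bigr)\,d\tau .
\]
Because $u$ takes values in the compact $J\subset I$, the intermediate arguments of $F'$ remain in a slightly enlarged compact $J'\Subset I$, so $\|m_j\|_{L^\infty}\leq\|F'\|_{L^\infty(J')}$ uniformly in $j$; a preliminary Moser bound at the single low regularity $s_0$ (proved by the $H^{s_0}$-algebra property \eqref{iterp1} and induction on $\lfloor s_0\rfloor$) gives additionally $\|m_j\|_{s_0}\lesssim_{F,J}1+\|u\|_{s_0}$.

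For (1), the term $F(S_0 u)$ is a trigonometric polynomial of fixed degree depending smoothly on $u$, hence bounded in $H^s$ by a constant $C(F,J)$. Each summand $m_j\Delta_j u$ is then expanded via the Bony decomposition into $\Opbw{m_j}\Delta_j u+\Opbw{\Delta_j u}m_j+R(m_j,\Delta_j u)$. The first family is summed using the $L^\infty$-version of \eqref{cont00} together with the Littlewood--Paley characterization of $H^s$, yielding a contribution $\lesssim\sup_j\|m_j\|_{L^\infty}\|u\|_s$; the other two families are summed using the uniform $H^{s_0}$-bound on $m_j$, \eqref{iterp1}, and \eqref{bonyeq2}. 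The net estimate is $\|F(u)\|_s\lesssim C(F,J)(1+\|u\|_s)$. Estimate (3) is the special case $F(0)=0$: then $F(S_0 u)=F(S_0u)-F(0)$ is controlled by $\|F'\|_{L^\infty(J)}\|S_0 u\|_{L^\infty}\lesssim\|u\|_s$ via Sobolev embedding, and the additive constant disappears.

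For (2), the fundamental theorem of calculus gives $F(u)-F(v)=(u-v)\,G(u,v)$ with $G(u,v):=\int_0^1 F'(v+\tau(u-v))\,d\tau$, and the Bony decomposition $fg=\Opbw{f}g+\Opbw{g}f+R(f,g)$ combined with Theorem~\ref{thm:contS} and \eqref{bonyeq2} produces the tame product
\[
\|fg\|_s\lesssim \|f\|_{L^\infty}\|g\|_s+\|g\|_{L^\infty}\|f\|_s+\|f\|_s\|g\|_{s_0}.
\]
Estimate (1) applied to the smooth bivariate composition $(\xi,\eta)\mapsto F'(\eta+\tau(\xi-\eta))$ gives $\|G(u,v)\|_s\lesssim 1+\|u\|_s+\|v\|_s$ and $\|G(u,v)\|_{s_0}\lesssim 1$; inserting $f=u-v$, $g=G(u,v)$ in the tame product and using $H^s\hookrightarrow L^\infty$ combines all terms into the stated inequality.

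The main obstacle is making the Stage~1 telescoping argument rigorous: one must show that $\sum_j\Opbw{m_j}\Delta_j u$ converges in $H^s$ with norm $\lesssim\|u\|_s$, which requires not only the uniform $L^\infty$-bound on $m_j$ but also fine control of the near-orthogonality of its Fourier support relative to $\Delta_j u$, so that the Littlewood--Paley square-function bound applies. Equally delicate is the uniform $H^{s_0}$-bound on $m_j$, where compactness of the range $J$ and smoothness of $F$ on a neighborhood of $J$ are both essential.
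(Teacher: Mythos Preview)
Your telescoping-plus-paraproduct outline is essentially the standard Meyer argument that underlies the reference \cite{AG} the paper cites, so the strategy is sound. But the paper's own proof consists of a single preliminary reduction that you omit, and it is not cosmetic: one first extends $F$ to $\tilde F\in C^\infty(\R;\C)$ agreeing with $F$ on a neighborhood of $J$, and only then invokes the classical Moser estimate for $\tilde F$ (adapting the Littlewood--Paley decomposition on $\R^d$ to the one on $\T^d$).

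The gap this fills is your claim that ``the intermediate arguments of $F'$ remain in a slightly enlarged compact $J'\Subset I$''. The projector $S_j=\chi_{2^j}(D)$ is convolution with the kernel $\psi_{2^j}$ of Lemma~\ref{lem:FC}; since $\chi$ in \eqref{def-chi} is smooth with compact support, $\psi$ is Schwartz but sign-changing, and $S_j u$ need not remain in the convex hull of the range of $u$. The overshoot is controlled --- by roughly $\tfrac12(\|\psi\|_{L^1}-1)\,\mathrm{diam}\,J$ --- so $S_j u$ takes values in some $J$-dependent compact $K\subset\R$, but nothing forces $K\subset I$: with $I=(0,\infty)$ and $J$ a sufficiently long subinterval, $S_0 u$ can be negative, and then $F(S_0 u)$ and the coefficients $m_j$ are simply undefined. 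Both your uniform $L^\infty$ and $H^{s_0}$ bounds on $m_j$ collapse at that point. After the extension $F\leadsto\tilde F$ your argument can be carried out with constants $\sup_K|\tilde F^{(k)}|$ depending only on $(F,J)$. (A smaller slip: $F(S_0 u)$ is not itself a trigonometric polynomial as you write --- only $S_0 u$ is --- though once the extension is made its $H^s$ norm is indeed bounded via the chain rule and Bernstein inequalities.)
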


\begin{proof}
Take an extension $\tilde F\in C^\infty(\R;\C)$ such that $\tilde F_{| I}= F $. 
Then $  F(u) = \tilde F(u)$ for any $u  \in H^s(\T^d;\R)$ with values in $ J $, 
and apply 
the usual Moser estimate,  
see e.g. \cite{AG},  replacing the   Littlewood-Paley decomposition on $\R^d$ with the one   on  $\T^d$ in  \eqref{LP}.
\end{proof}

\section{Paralinearization of (EK)-system and complex form}\label{sec:3}

In this section we paralinearize  the Euler-Korteweg system  \eqref{modif0} and write it in terms of the complex  variable
\begin{equation}\label{defu}
u := \frac{1}{\sqrt 2}  \left(\frac{\eq}{K(\eq)}\right)^{-1/4} \rho + \frac{\im}{\sqrt 2}\, \left(\frac{\eq}{K(\eq)}\right)^{1/4} \phi \, , \quad \rho \in \dot H^s \, , \ \phi \in \dot H^s \, . 
\end{equation}
The variable $ u \in \dot H^s $. 
We denote this change of coordinates in $ \dot H^s \times \dot H^s $ by 
\begin{equation}\label{scala}\begin{aligned}
& \vect{u}{\bar u}= \boldsymbol{C}^{-1} \vect{\rho}{\phi},\\ & \boldsymbol{C}:=\frac{1}{\sqrt 2} \begin{pmatrix} \left(\frac{\eq}{K(\eq)}\right)^{\frac14}&\left(\frac{\eq}{K(\eq)}\right)^{\frac14}\\ -\im \left(\frac{\eq}{K(\eq)}\right)^{-\frac14}&\im \left(\frac{\eq}{K(\eq)}\right)^{-\frac14}\end{pmatrix},\quad \boldsymbol{C}^{-1}=\frac{1}{\sqrt 2} \begin{pmatrix} \left(\frac{\eq}{K(\eq)}\right)^{-\frac14}&\im\left(\frac{\eq}{K(\eq)}\right)^{\frac14}\\ \left(\frac{\eq}{K(\eq)}\right)^{-\frac14}&-\im \left(\frac{\eq}{K(\eq)}\right)^{\frac14}\end{pmatrix} \, . 
\end{aligned}
\end{equation}
We also define the matrices
\begin{equation}
\label{def:E}
J := 
 \begin{bmatrix}
0& 1 \\
-1 & 0
\end{bmatrix}, \qquad 
\J:=  \begin{bmatrix}
-\im & 0 \\
0 & \im
\end{bmatrix} , \qquad 
\uno :=  \begin{bmatrix}
1& 0 \\
0 & 1
\end{bmatrix} . 
\end{equation}

 \begin{proposition}{\bf (Paralinearized Euler-Korteweg equations in  complex coordinates)}
The (EK)-system  \eqref{modif0} can be 
written in terms of the complex variable 
$ U := \vect{u}{\bar u} $ with $ u $ defined in \eqref{defu},  
in the paralinearized form 
\begin{equation}
\label{EKcf1}
\begin{aligned}
\pa_t U  & =  \ 
\J \, \Big[ \Opbw{
A_2(U; x, \xi)  + A_1(U; x, \xi)  }    \Big] U+ R(U)
\end{aligned}
\end{equation} 
where, for any function $U\in \dot {\bf H}^{s_0+2} $ such that 
\begin{equation}\label{defrho}
\rho (U) := \frac{1}{\sqrt 2} \left(\frac{\eq}{K(\eq)}\right)^{1/4} \Pi_0^\bot (u + \bar u) \in \cQ
\ \  {\rm (see \ \eqref{def:Q})}  \, ,  
\end{equation}
(i) 
$A_2(U;x,\xi) \in \Gamma_{s_0+2}^2 \otimes \cM_2(\C) $ is  
the matrix of symbols  
\begin{equation}
\label{A2}
A_2(U; x, \xi) := 
\sqrt{\eq K(\eq)} |\xi|^2
 \begin{bmatrix}
1 + \ta_+(U; x) & \ta_-(U; x) \\
\ta_-(U; x) & 1+ \ta_+(U; x)
\end{bmatrix}
\end{equation}
where  $\ta_\pm(U; x) \in \Gamma_{s_0+2}^0$ are the 
$\xi$-independent functions 
\begin{equation}
\label{tatb}
\begin{aligned}
&\ta_\pm(U; x) := \frac{1}{2} \left( \frac{K(\rho + \eq)- K(\eq) }{K(\eq)} \pm  \frac{\rho}{\eq} \right) \, . 
\end{aligned}
\end{equation}
\\[1mm]
(ii) 
$A_1(U; x, \xi) \in \Gamma_{s_0+1}^1 \otimes \cM_2(\C) $ is
the diagonal matrix of symbols 
\begin{equation}
\label{A1}
A_1(U; x, \xi) := 
\begin{bmatrix}
\tb(U; x) \cdot \xi  & 0 \\
0 & -\tb(U; x)   \cdot \xi 
\end{bmatrix} ,\qquad 
\tb(U; x) :=  \grad \phi \in  \Gamma_{s_0+1}^0\otimes \R^d \, .
\end{equation}
{Moreover for any $\sigma\geq 0 $ there exists a non decreasing function
$\tC ( \ ) : \R_+ 
\to \R_+ $ (depending on $K$)  such that, for any 
$U,V\in \dot {\bf H}^{s_0} $ with 
$\rho(U), \rho (V) \in \cQ $, $W \in \dot \bH^{\s+2}$ and $j=1,2$, we have 
\begin{align}
& \| \Opbw{ A_j(U)}W\|_{\s} \leq \tC\left(\| U\|_{s_0}\right)\| W\|_{\s+2}  \label{ba1}\\
& \| \Opbw{ A_j(U)-A_j(V)} W\|_{\s}\le \tC\left(\|U\|_{s_0}, \| V\|_{s_0}\right) \| W\|_{\s+2} \| U-V\|_{s_0} \label{la1}
\end{align}}
where in \eqref{la1}  we denoted by $\tC(\cdot, \cdot):= \tC\left(\max\{\cdot,\cdot\}\right)$.
\\[1mm]
(iii) 
The vector field $R(U)$ satisfies the following ``semilinear'' estimates: 
for any $\sigma \geq s_0 > d / 2 $ there exists a non decreasing function
$\tC ( \ ) : \R_+ 
\to \R_+ $ (depending also  on $g,K$)  such that, for any 
$U,V\in \dot {\bf H}^{\sigma+2} $ such that 
$\rho(U), \rho (V) \in \cQ $, we have 
\begin{align}
\label{restoparalin0}
& \| R(U) \|_{ \sigma}  \leq  \tC \left(\| U\|_{ {s_0+2}} \right)\| U\|_{ \sigma} , \qquad 
 \| R(U)\|_{ {\sigma}} \leq \tC \left(\| U\|_{ {s_0}}\right) \| U\|_{ {\sigma+2}}\, , \\
\label{restoparalin1}
& \|R(U)-R(V) \|_{ \sigma}  \leq \tC \left(  \|U\|_{ {s_0+2}}, \, \|V\|_{ {s_0+2}}\right) \|U-V\|_{{\sigma}} 
+ \tC \left(  \|U\|_{ {\sigma}}, \, \|V\|_{ {\sigma}}\right) \|U-V\|_{{s_0+2}} \\
\label{restoparalin2}
& \|R(U)-R(V) \|_{ s_0}  \leq \tC \left(  \|U\|_{ {s_0+2}}, \, \|V\|_{ {s_0+2}}\right) \|U-V\|_{{s_0}} \,,
\end{align}
where in \eqref{restoparalin1} and \eqref{restoparalin2} we denoted again by $\tC(\cdot, \cdot):= \tC\left(\max\{\cdot,\cdot\}\right)$.
 \end{proposition}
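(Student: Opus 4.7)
The plan is to paralinearize each nonlinear term of the real system \eqref{modif0} using only the Bony paraproduct decomposition of Lemma \ref{bony} (and never Bony's full paralinearization formula), keeping the order-$2$ and order-$1$ paradifferential contributions and absorbing every $H^s$-valued byproduct into a semilinear remainder $R$. After this is done in real coordinates $(\rho,\phi)$, we conjugate the resulting block system by the constant matrix $\boldsymbol{C}$ of \eqref{scala} to obtain the complex form \eqref{EKcf1} and read off $A_2$, $A_1$.

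First I paralinearize the continuity equation by writing $\mathrm{div}(\rho\nabla\phi)=\rho\Delta\phi+\nabla\rho\cdot\nabla\phi$ and applying Lemma \ref{bony} to each product, then moving derivatives inside the Bony--Weyl quantization via the composition Theorem \ref{thm:compS}. This yields the principal paradifferential contributions $\Opbw{-\rho|\xi|^2}\phi$ and the subprincipal transport $\Opbw{\ii\xi\cdot\nabla\phi}\rho$, plus terms of the form $\Opbw{\Delta\phi}\rho$ and Bony remainders which, by Theorem \ref{thm:contS} and the smoothing gain in \eqref{bonyeq2}--\eqref{bonyeq2o}, are in $H^\sigma$ with a tame norm bound. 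For the capillary term $K(\eq+\rho)\Delta\rho$ I apply Lemma \ref{bony} to the product and obtain the principal contribution $\Opbw{-K(\eq+\rho)|\xi|^2}\rho$, while the exchanged piece $\Opbw{\Delta\rho}K(\eq+\rho)$ is semilinear because $K(\eq+\rho)\in H^s$ by Moser (Theorem \ref{thm:moser}). The term $\tfrac12|\nabla\phi|^2$ is paralinearized to $\Opbw{\ii\xi\cdot\nabla\phi}\phi$ modulo a Bony smoothing remainder; the quadratic capillary term $\tfrac12 K'(\eq+\rho)|\nabla\rho|^2$ is handled by two successive Bony decompositions (one for $|\nabla\rho|^2$, one for the product with $K'(\eq+\rho)$), producing an order-$1$ paradifferential piece which, after a Poisson-bracket rearrangement via Theorem \ref{thm:compS}, is anti-self-adjoint and hence ends up either contributing to the subprincipal $A_1$ or being absorbed into $R$. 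Finally $g(\eq+\rho)$ is placed entirely in $R$ as an $H^\sigma$ function by Moser.

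At this stage \eqref{modif0} reads $\partial_t\begin{pmatrix}\rho\\ \phi\end{pmatrix}=J\,\Opbw{B_2+B_1}\begin{pmatrix}\rho\\ \phi\end{pmatrix}+R_{\rm real}(U)$, with $B_2$ symmetric of order $2$ of the form $\mathrm{diag}(K(\eq+\rho),\,\eq+\rho)|\xi|^2$ and $B_1$ a scalar transport symbol $\ii\xi\cdot\nabla\phi$. Conjugating by $\boldsymbol{C}$ as in \eqref{scala} diagonalizes the constant $(\eq,K(\eq))$-part of the leading symbol into $\sqrt{\eq K(\eq)}|\xi|^2\,\uno$; the variable pieces $K(\eq+\rho)-K(\eq)$ and $\rho$ mix, and the straightforward $2\times2$ linear algebra (using $\boldsymbol{C}^{-1}J\boldsymbol{C}=\J$) produces precisely the symmetric form \eqref{A2} with the combinations $\mathfrak{a}_\pm$ defined in \eqref{tatb}. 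Since the transport symbol $\ii\xi\cdot\nabla\phi$ is already scalar, it commutes with $\boldsymbol{C}$ and becomes the diagonal $A_1$ of \eqref{A1}.

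The quantitative statements are then almost automatic. The $A_j$-estimates \eqref{ba1}--\eqref{la1} follow from the continuity Theorem \ref{thm:contS} once the seminorms $|A_2|_{2,s_0,N}$, $|A_1|_{1,s_0,N}$ are controlled, which is done by Moser's inequalities \eqref{moser} applied to $\mathfrak{a}_\pm=\mathfrak{a}_\pm(\rho)$ and $\mathfrak{b}=\nabla\phi$, together with interpolation \eqref{iterp1}--\eqref{interp2}; the Lipschitz version uses the second line of \eqref{moser} and writes $K(\eq+\rho)-K(\eq+\rho')=(\rho-\rho')\int_0^1 K'(\eq+\rho'+\tau(\rho-\rho'))\,d\tau$. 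The tame semilinear bounds \eqref{restoparalin0}--\eqref{restoparalin2} on $R(U)$ collect: the Bony remainder gain in \eqref{bonyeq2}--\eqref{bonyeq2o}, which absorbs $s_0$ derivatives; the composition remainder \eqref{compS} arising each time a derivative is pushed inside a Bony--Weyl symbol; and the Moser bounds for $g(\eq+\rho)$, $K(\eq+\rho)$, $K'(\eq+\rho)$. The main obstacle in the bookkeeping is to verify that after all rearrangements the subprincipal matrix remains diagonal in complex coordinates with only $\nabla\phi$ appearing, i.e.\ that the $\nabla\rho$-transport contributions coming from $\mathrm{div}(\rho\nabla\phi)$ and from $\tfrac12 K'(\eq+\rho)|\nabla\rho|^2$ recombine, via the self-adjointness of the Weyl quantization and the Poisson-bracket correction in Theorem \ref{thm:compS}, into an order-zero operator that can be thrown into $R(U)$.
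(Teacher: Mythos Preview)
Your plan is correct and follows essentially the same route as the paper: paralinearize each term of \eqref{modif0} via Lemma \ref{bony} and Theorem \ref{thm:compS}, arrive at the real block form \eqref{EKpara}, then conjugate by $\boldsymbol{C}$ using $\boldsymbol{C}^{-1}J=\J\boldsymbol{C}^*$. One clarification on what you flag as the ``main obstacle'': the $\nabla\rho$-transport terms do not merely recombine into something order-zero by a self-adjointness argument, they cancel \emph{exactly} and separately in each equation---in the $\rho$-equation the Poisson-bracket correction from $\Opbw{\rho}\Opbw{-|\xi|^2}$ yields $+\Opbw{\nabla\rho\cdot\im\xi}\phi$ which cancels the Bony piece of $-\nabla\rho\cdot\nabla\phi$, and in the $\phi$-equation the Poisson-bracket correction from $\Opbw{K(\eq+\rho)}\Opbw{-|\xi|^2}$ yields $-\Opbw{K'(\eq+\rho)\nabla\rho\cdot\im\xi}\rho$ which cancels the principal piece of $\tfrac12 K'(\eq+\rho)|\nabla\rho|^2$; this is purely algebraic bookkeeping and is how the paper obtains the clean matrix in \eqref{EKpara}.
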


\begin{proof} 
We first  paralinearize the original equations \eqref{modif0}, then we switch to complex coordinates. 
\\[1mm]
{\bf Step 1: paralinearization of \eqref{modif0}.}
We apply several times  the paraproduct Lemma \ref{bony} and 
the composition Theorem \ref{thm:compS}. 
In the following we  denote by 
$R^p$  the remainder that comes from Lemma \ref{bony}, and 
 by $R^{-\vr}$, $\vr=1,2$,  the remainder that comes from Theorem \ref{thm:compS}.  
We shall adopt the following convention:  given  $\R^d$-valued  
symbols $ a = (a_j)_{j=1,\ldots, d} $, $ b = (b_j)_{j=1,\ldots, d} $
in some class $\Gamma^m_s\otimes \R^d$,  we denote 
$ R^p(a , b) := \sum_{j=1}^d R^p(a_j, b_j)  $, 
\begin{align*}
R^{-\vr}(a, b) := \sum_{j=1}^d R^{-\vr}(a_j, b_j) \quad {\rm and} \quad 
\Opbw{a}\cdot \Opbw{b} := \sum_{j=1}^d \Opbw{a_j} \Opbw{b_j} \, . 
\end{align*}
We paralinearize the terms in the first line of \eqref{modif0}. 
We  have $  \Delta \phi = - \Opbw{ |\xi |^2} \phi $
and 
$ \div (\rho \nabla \phi) = \nabla \rho \cdot \nabla \phi + \rho \Delta \phi $ can be written as
\begin{align}
 \notag
\rho \Delta  \phi & = -\Opbw{\rho |\xi|^2 + \grad \rho \cdot \im \xi }\phi \\
& \quad + \Opbw{\Delta\phi}\rho + R^p(\rho, \Delta \phi)+ R^{-2}(\rho, |\xi|^2)\phi \, ,\label{paraline1}  \\
\notag
\grad \rho \cdot \grad \phi & = \Opbw{\grad \rho \cdot \im \xi }\phi + \Opbw{\grad \phi \cdot \im \xi }\rho\\
& \quad + R^p(\nabla \rho, \nabla \phi) + R^{-1}(\nabla \rho, \im\xi)\phi+R^{-1}(\nabla \phi, \im\xi)\rho\label{paraline2} \, .  
\end{align}
Then we paralinearize the terms in the second line of \eqref{modif0}. 
We have
\begin{align}
\frac12  | \grad \phi |^2  &= \Opbw{\grad \phi\cdot  \im \xi } \phi \notag \\
& \quad + \frac12 R^p(\nabla \phi,\nabla \phi)+R^{-1}(\nabla \phi, \im\xi)\phi\label{paraline3} \, . 
\end{align}
Using \eqref{gmze} we regard the semilinear term  
\begin{equation}\label{paraline4}
g(\eq + \rho)  = g(\eq + \rho)  - g(\eq) =:  R(\rho) 
\end{equation}
directly as a remainder.  Moreover, writing $  \Delta \rho = - \Opbw{ |\xi |^2} \rho $, we get 
\begin{align}
K(\eq + \rho)\Delta \rho  & = \Opbw{K(\eq+\rho)}\Delta \rho+\Opbw{\Delta \rho} K(\eq+\rho)+R^p(\Delta \rho, K(\eq +\rho)) \nonumber \\
& = - \Opbw{K(\eq+\rho)|\xi|^2 + K'(\eq+\rho) \grad \rho \cdot \im \xi }\rho \nonumber \\
&\quad +\Opbw{\Delta \rho} K(\eq+\rho)+ R^p(\Delta \rho, K(\eq +\rho)) -  R^{-2}(K(\eq+\rho),|\xi|^2)\rho \, .\label{paraline5}
\end{align}
Finally, using for $ \frac12  | \grad \rho |^2  $ the expansion \eqref{paraline3} for $\rho $ instead of $ \phi $, we obtain 
\begin{align}
\notag
\frac12 K'(\eq + \rho)| \grad \rho | ^2  
&  = \frac12 \Opbw{K'(\eq+\rho)}|\nabla \rho|^2+ \frac12 \Opbw{|\nabla \rho|^2}K'(\eq+\rho) 
\nonumber  \\
& \quad + \frac12 R^p(|\nabla  \rho|^2, K'(\eq +\rho)) 
= \Opbw{K'(\eq  + \rho) 	\grad \rho \cdot \, \im \xi } \rho  + {\mathtt R}(\rho) \nonumber 
\end{align}
where
 \begin{align}
\label{bfgr}  {\mathtt R} (\rho) & := \frac12\Opbw{|\nabla \rho|^2}K'(\eq+\rho)+
\frac12 R^p(|\nabla  \rho|^2, K'(\eq +\rho))\\
 &\label{bfgr2} \quad + \frac12 \Opbw{K'(\eq+\rho)}R^p(\nabla \rho, \nabla \rho)\\
 &\label{bfgr3} \quad 
 +\Opbw{K'(\eq+\rho)} R^{-1}(\nabla \rho, \im \xi ) \rho +  R^{-1}(K'(\eq+\rho), \im \nabla \rho\cdot \xi)\rho \, . 
  \end{align}
Collecting all the above  expansions and recalling the definition of 
the symplectic matrix $ J $ in  \eqref{def:E},  the system \eqref{modif0} can be written in the paralinearized form
\begin{equation}
\label{EKpara}
\begin{aligned}
\pa_t \vect{\rho}{\phi} & =  
J \Opbw{
\begin{bmatrix} K(\eq+\rho)|\xi|^2  &  \grad \phi \cdot \,\im \xi \\ 
-\grad \phi\cdot  \,\im \xi  &  (\eq+\rho)|\xi|^2
\end{bmatrix}}\vect{\rho}{\phi} +  R(\rho,\phi) 
 \end{aligned}
 \end{equation}
 where  we collected   in  $R(\rho, \phi)$ all the terms in lines \eqref{paraline1}--\eqref{bfgr3}.
 \\[1mm]
 {\bf Step 2: complex coordinates.} 
 We now write system \eqref{EKpara} in the 
 complex coordinates
  $U = \boldsymbol{C}^{-1}\vect{\rho}{\phi}$. Note that $ \boldsymbol{C}^{-1}$ 
conjugates the Poisson tensor  $ J $ to
$\J$ defined in \eqref{def:E}, i.e. 
$ \boldsymbol{C}^{-1} \, J  =  \J  \, \boldsymbol{C}^* $ and 
therefore system
 \eqref{EKpara} is conjugated to
  \begin{equation}
\begin{aligned}
\label{EKcf10}
\pa_t U& =
\J  
\boldsymbol{C}^{*}
\Opbw{
\begin{bmatrix} K(\eq+\rho)|\xi|^2  &  \grad  \phi \cdot  \,\im \xi \\ 
- \grad  \phi \cdot \,\im \xi &  \rho \, |\xi|^2
\end{bmatrix}}
\boldsymbol{C}U 
 + \boldsymbol{C}^{-1} R(\boldsymbol{C}U)  \, . 
 \end{aligned}
 \end{equation}
Using \eqref{scala}, system \eqref{EKcf10} reads as system \eqref{EKcf1}-\eqref{A1} with
$ R(U) := \boldsymbol{C}^{-1} R(\boldsymbol{C}U) $. 

We note also that estimates \eqref{ba1} and \eqref{la1} for $j = 2$ follow by \eqref{cont00} and \eqref{moser}, whereas in case  $j = 1$  follow by \eqref{cont2} applied with $m=1$, $\vr=1$. 
\\[1mm]
{\bf Step 3: Estimate of the remainder $ R(U) $}. 
We now prove  \eqref{restoparalin0}-\eqref{restoparalin2}.
Since $ \| \rho \|_\sigma,  \| \phi \|_\sigma \sim \| U \|_\sigma $ for any $ \sigma \in \R $
by \eqref{scala}, the estimates 
\eqref{restoparalin0}-\eqref{restoparalin2} directly follow from  
those of $ R(\rho, \phi) $ in \eqref{EKpara}. 
We now estimate each term in \eqref{paraline1}--\eqref{bfgr3}. 
In the sequel $\sigma \geq s_0 > d / 2 $.
\\[1mm]
{\sc Estimate of the terms in line \eqref{paraline1}.}  Applying first  \eqref{cont00} 
with $ m = 0 $,  and then \eqref{cont2} with $\vr=2$,  we have 
\begin{equation}\label{ess1}
\| \Opbw{\Delta \phi} \rho \|_{ \sigma}\lesssim \| \phi\|_{ {s_0+2}} \| \rho\|_{ \sigma} \,  , 
\quad \| \Opbw{\Delta \phi} \rho \|_{ \sigma}\lesssim 
\| \phi\|_{ {s_0}} \| \rho\|_{ {\sigma+2}} \, . 
\end{equation}
By  \eqref{bonyeq2}, 
the smoothing remainder in line \eqref{paraline1} satisfies the estimates
\begin{equation}\label{ess2}
\|R^p(\rho,\Delta \phi)\|_{ \sigma} \lesssim \| \phi\|_{ {s_0+2}}\| \rho\|_{ \sigma} \, , 
\quad 
\|R^p(\rho,\Delta \phi)\|_{ \sigma} \lesssim \| \phi\|_{ {s_0}}\| \rho\|_{ {\sigma+2}} \, ,
\end{equation}
and, by \eqref{compS} with $ \varrho = 2 $,
and the interpolation estimate \eqref{interp3}, 
\begin{equation}\label{ess3}
\| R^{-2}(\rho, |\xi|^2)\phi \|_{\sigma}\lesssim \| \rho\|_{{s_0+2}} \| \phi\|_{\sigma}
\lesssim  \| \phi\|_{{s_0}} \| \rho\|_{{\sigma+2}}+\| \rho\|_{{s_0}} \| \phi\|_{{\sigma+2}} \, .
\end{equation}
By \eqref{ess1}-\eqref{ess3} and $ \| \rho \|_\sigma,  \| \phi \|_\sigma \sim \| U \|_\sigma $
we deduce that the terms  in line \eqref{paraline1}, written in function of $ U  $, 
satisfy \eqref{restoparalin0}.
Next we write
$$
\Opbw{\Delta \phi_1} \rho_1- \Opbw{\Delta \phi_2} \rho_2 = 
\Opbw{\Delta \phi_1} [\rho_1-\rho_2] + 
\Opbw{\Delta \phi_1-\Delta \phi_2}\rho_2 
$$
and, applying  \eqref{cont00} with $ m = 0$, and 
\eqref{cont2} with $\vr = 2$ to $\Opbw{\Delta \phi_1-\Delta \phi_2}\rho_2$,  we get
\begin{equation}\label{ess4}
\begin{aligned}
 \| \Opbw{\Delta \phi_1} \rho_1- \Opbw{\Delta \phi_2} \rho_2\|_{ \sigma}
 &\lesssim \| \phi_1\|_{ {s_0+2}}\| \rho_1-\rho_2\|_{ \sigma}+  \| \phi_1-\phi_2\|_{ {s_0+2}}\| \rho_2\|_{ \sigma}  \\
  \| \Opbw{\Delta \phi_1} \rho_1- \Opbw{\Delta \phi_2} \rho_2\|_{ \sigma}
 &\lesssim  \| \phi_1\|_{ {s_0+2}}\| \rho_1-\rho_2\|_{ \sigma}+  \| \phi_1-\phi_2\|_{ {s_0}}\| \rho_2\|_{ {\sigma+2}} \, .
 \end{aligned}
 \end{equation}
Concerning the remainder $R^p(\rho, \Delta \phi)$,  we write $R^p(\rho_1,\Delta \phi_1)-R^p(\rho_2,\Delta \phi_2) = 
 R^p(\rho_1-\rho_2,\Delta \phi_1) +R^p(\rho_2,\Delta \phi_2- \Delta \phi_1)$ and, applying 
\eqref{bonyeq2}, we get 
\begin{equation}\label{ess5}
 \begin{aligned}
 \|R^p(\rho_1,\Delta \phi_1)-R^p(\rho_2,\Delta \phi_2)\|_{ \sigma} 
 &\lesssim  \| \phi_1\|_{ {s_0+2}} \| \rho_1-\rho_2\|_{ \sigma} + \| \rho_2\|_{ \sigma} \| \phi_1-\phi_2\|_{ {s_0+2}} \\
 \|R^p(\rho_1,\Delta \phi_1)-R^p(\rho_2,\Delta \phi_2)\|_{ \sigma} 
 &\lesssim \| \phi_1\|_{ {s_0+2}} \| \rho_1-\rho_2\|_{ \sigma} + \| \rho_2\|_{ {\sigma+2}} \| \phi_1-\phi_2\|_{ {s_0}} \, .
 \end{aligned}
 \end{equation}
 Finally  we write  
 $
 R^{-2}(  \rho_1, |\xi|^2)\phi_1- R^{-2}( \rho_2, |\xi|^2)\phi_2 = R^{-2}(  \rho_1-\rho_2, |\xi|^2)\phi_1 + 
 R^{-2}( \rho_2, |\xi|^2)[\phi_1- \phi_2] $. 
Using   \eqref{compS} we get 
\begin{equation}\label{ess50}
 \| R^{-2}(  \rho_1, |\xi|^2)\phi_1- R^{-2}( \rho_2, |\xi|^2)\phi_2\|_{\sigma}
 \lesssim \| \phi_1\|_{\sigma}\| \rho_1-\rho_2\|_{s_0+2}
 +  \| \phi_1-\phi_2\|_{\sigma}\| \rho_2\|_{s_0+2} \, .
\end{equation}
We also claim that 
\begin{equation}\label{ess7}
 \| R^{-2}(  \rho_1, |\xi|^2)\phi_1- R^{-2}( \rho_2, |\xi|^2)\phi_2\|_{\sigma}
 \lesssim
 \| \rho_1-\rho_2\|_{s_0} \| \phi_1\|_{\sigma+2}+
  \| \phi_1-\phi_2\|_{\sigma}\| \rho_2\|_{s_0+2}  \, .
\end{equation}
Indeed, we bound 
\begin{align*}
 \| R^{-2}(  \rho_1, |\xi|^2)\phi_1- R^{-2}( \rho_2, |\xi|^2)\phi_2\|_{\sigma}
 &\lesssim \| R^{-2}(  \rho_1-\rho_2, |\xi|^2)\phi_1\|_{\sigma}+  \| \phi_1-\phi_2\|_{\sigma}\| \rho_2\|_{s_0+2} \, 
\end{align*}
and, to control  $R^{-2}(  \rho_1-\rho_2, |\xi|^2)\phi_1$,  we use that, by  definition, it equals 
\begin{align*}
 \Opbw{\rho_1-\rho_2}\Opbw{|\xi|^2}\phi_1- \Opbw{ (\rho_1-\rho_2)|\xi|^2}\phi_1 -\Opbw{ \nabla(\rho_1-\rho_2)\cdot \im \xi} \phi_1 
\end{align*}
and we estimate the first two terms using   \eqref{cont2} with $\vr = 0$ and the last term   with $\vr = 1$, by 
$\| R^{-2}(  \rho_1-\rho_2, |\xi|^2)\phi_1\|_{\sigma}\lesssim 
 \| \rho_1-\rho_2\|_{s_0} \| \phi_1\|_{\sigma+2} 
 $,  
 proving \eqref{ess7}. By \eqref{ess4}-\eqref{ess7} and $ \| \rho \|_\sigma,  \| \phi \|_\sigma \sim \| U \|_\sigma $
we deduce that the terms  in line \eqref{paraline1}, written in function of $ U  $, 
satisfy \eqref{restoparalin1}-\eqref{restoparalin2}.
 \\[1mm]
The estimates \eqref{restoparalin0}-\eqref{restoparalin2}
for the terms in lines \eqref{paraline2}, \eqref{paraline3}, \eqref{paraline5} and \eqref{paraline4}, follow by similar arguments, using also \eqref{moser}. 
\\[1mm]
{\sc  Estimates of $ {\mathtt R}(\rho) $ defined in \eqref{bfgr}-\eqref{bfgr3}.}

Writing 
$ \Opbw{|\nabla \rho|^2}K'(\eq+\rho) = \Opbw{|\nabla \rho|^2}(K'(\eq+\rho) -  K'(\eq))$
(in the homogeneous spaces $ \dot H^s $), we have, 
by \eqref{cont00}, the fact that $ \rho \in \cQ $, 
Theorem \ref{thm:moser},   \eqref{bonyeq2}, \eqref{interp2},  
\eqref{compS} with $\vr=1$, 
$$
   \| {\mathtt R} (\rho)\|_{ \sigma} \leq \tC \big(\| \rho\|_{ {s_0+2}}\big) \| \rho\|_{ \sigma} \, .  
$$
  Thus $ {\mathtt R} (\rho) $, written as a function of $ U $, satisfies \eqref{restoparalin0}.
  The estimates \eqref{restoparalin1}-\eqref{restoparalin2} follow by 
  \begin{align}
 &\label{sbound}\| {\mathtt R}(\rho_1)-{\mathtt R}(\rho_2)\|_{ \sigma} \leq 
 \tC\big(\| \rho_1\|_{ {s_0+2}},\| \rho_2\|_{ {s_0+2}}\big)\| \rho_1-\rho_2\|_{ \sigma}+
 \tC \big(\| \rho_1\|_{ {\sigma}},\| \rho_2\|_{ {\sigma}}\big)\| \rho_1-\rho_2\|_{ {s_0+2}}  \\
 &\label{tbound}\| {\mathtt R}(\rho_1)-{\mathtt R}(\rho_2)\|_{ {s_0}} \leq 
 \tC \big(\| \rho_1\|_{ {s_0+2}},\| \rho_2\|_{ {s_0+2}}\big)\| \rho_1-\rho_2\|_{ {s_0}} \, .
 \end{align}
 {\sc Proof of \eqref{sbound}.}
Defining $ w := \nabla( \rho_1+\rho_2)$, $v:= \nabla (\rho_1-\rho_2)$, then we have, by
\eqref{iterp1},   
  \begin{align}
  &  \| |\nabla \rho_1|^2- |\nabla \rho_2|^2\|_{ {s_0}} = \|  w \cdot v\|_{ {s_0}}
  \lesssim \big( \| \rho_1\|_{ {s_0+1}} +\| \rho_2\|_{ {s_0+1}}\big) \| \rho_1-\rho_2\|_{ {s_0+1}}\label{nvj}
 \\
&   \| |\nabla \rho_1|^2- |\nabla \rho_2|^2\|_{ {s_0-1}} = \|  w \cdot v\|_{ {s_0-1}}
   \stackrel{\eqref{interp2}}\lesssim \big( \| \rho_1\|_{ {s_0+1}} +\| \rho_2\|_{ {s_0+1}}\big) \| \rho_1-\rho_2\|_{ {s_0}} \, .  \label{nvk}
   \end{align}
Let us prove \eqref{sbound} for the first term in \eqref{bfgr}. 
 Remind that $ \rho_1, \rho_2 $ are in $ \cQ $. 
 We have 
  \begin{align}
  \| &\Opbw{|\nabla \rho_1|^2}K'(\eq+\rho_1)-\Opbw{|\nabla \rho_2|^2}K'(\eq+\rho_2)\|_{ \sigma} \nonumber \\
  &\leq \| \Opbw{w \cdot v} \big( K'(\eq+ \rho_1)\|_{ \sigma} + \| \Opbw{|\nabla \rho_2|^2}\big[K'(\eq+\rho_1)-K'(\eq+\rho_2)\big]\|_{ \sigma} \nonumber \\
  &\stackrel{\eqref{cont00}}\lesssim \| w\cdot v\|_{ {s_0}} 
  \| K'(\eq+ \rho_1)-K'(\eq)\|_{ \sigma}+ \| \rho_2\|_{ {s_0+1}}^2 \|K'(\eq+\rho_1)-K'(\eq+\rho_2)\|_{ \sigma} \nonumber \\
  &\stackrel{\eqref{interp3},\eqref{moser},\eqref{nvj}}\lesssim 
  \| \rho_1\|_\sigma (\| \rho_1\|_{s_0+1}+ \| \rho_2\|_{s_0+1}) \| \rho_1-\rho_2\|_{s_0+1}  + \tC \big( \| \rho_1\|_{s_0+1}, \|\rho_2\|_{ {s_0+1}} \big) \| \rho_1-\rho_2\|_{ {\sigma}}
  \nonumber \\
  &\ \ \ \ \ \ \ \ \ + \| \rho_2\|_{ {s_0}}\| \rho_2\|_{s_0+2}( \| \rho_1\|_\sigma + \| \rho_2\|_\sigma ) \| \rho_1-\rho_2\|_{s_0} \nonumber \\
  &\stackrel{\eqref{interp3}}\leq \tC \big( \| \rho_1\|_\sigma, \|\rho_2\|_{ {\sigma}} \big) \| \rho_1-\rho_2\|_{ {s_0+2}}
 +
  \tC \big( \| \rho_1\|_{s_0+2}, \|\rho_2\|_{ {s_0+2}} \big) \| \rho_1-\rho_2\|_{ {\sigma}} \, . 
  \label{boundvol}
  \end{align}
  In the same way the second term in \eqref{bfgr} is bounded by \eqref{boundvol}. 
  Regarding the term in  \eqref{bfgr2}, using that $R^p(\cdot,\cdot)$ is bilinear and symmetric, we have 
  \begin{align}
  \notag \| &\Opbw{K'(\eq+\rho_1)}R^p(\nabla \rho_1, \nabla \rho_1)-\Opbw{K'(\eq+\rho_2)}R^p(\nabla \rho_2, \nabla \rho_2)\|_{ \sigma}\\
  \notag&\leq \| \Opbw{K'(\eq+\rho_1)-K'(\eq +\rho_2)}R^p(\nabla \rho_1, \nabla \rho_1)\|_{ \sigma} + \| \Opbw{K'(\eq+\rho_2)}R^p(w, v)\|_{ \sigma} 	\nonumber \\
  \notag &\stackrel{\eqref{cont00}, \eqref{bonyeq2}}\lesssim \| \rho_1\|_{ {\sigma}}\| \rho_1\|_{s_0+2} \| K'(\eq+ \rho_1)-K'(\eq+\rho_2)\|_{ {s_0}}+ \| w\|_{ {s_0+1}}\| v\|_{\sigma-1} \|K'(\eq+\rho_2)\|_{ {s_0}} \nonumber \\
 &\stackrel{\eqref{moser},\eqref{nvj}}\leq \tC \big( \| \rho_1\|_{\sigma}, \| \rho_2\|_{\sigma} \big) \| \rho_1-\rho_2\|_{s_0}+ \tC \big( \| \rho_1\|_{s_0+2}, \|\rho_2\|_{s_0+2} \big) \| \rho_1-\rho_2\|_{\sigma} \label{prontagia} \, .
  \end{align}
Also the terms in \eqref{bfgr3} are bounded by \eqref{boundvol}, 
 proving that ${\mathtt R}(\rho)$ satisfies  \eqref{sbound}.
  \\[1mm]
  {\sc Proof of \eqref{tbound}}.
Regarding the first term \eqref{bfgr},   we have 
\begin{align}
  & 
  \| \Opbw{|\nabla \rho_1|^2}K'(\eq+\rho_1)-\Opbw{|\nabla \rho_2|^2}K'(\eq+\rho_2)\|_{ s_0}
  \nonumber \\
  & \leq \| \Opbw{w \cdot v}K'(\eq+ \rho_1)\|_{ s_0} + \| \Opbw{|\nabla \rho_2|^2}\big[K'(\eq+\rho_1)-K'(\eq+\rho_2)\big]\|_{ s_0} \nonumber \\
  & \stackrel{\eqref{cont00},\eqref{cont2}}\lesssim \| w \cdot v \|_{ s_0-1}\|K'(\eq+ \rho_1)\|_{s_0+1}+ \| \rho_2\|_{ {s_0+1}}^2 \|K'(\eq+\rho_1)-K'(\eq+\rho_2)\|_{ s_0} \nonumber \\
  & \stackrel{\eqref{moser},\eqref{nvk}}\leq \tC \big( \| \rho_1\|_{s_0+1}, \|\rho_2\|_{ {s_0+1}} \big) \| \rho_1-\rho_2\|_{ {s_0}} \, . \label{329}
  \end{align}
Similarly we deduce that the second term in \eqref{bfgr} is bounded 
as in \eqref{329}. Regarding the  term in \eqref{bfgr2},   note that  the bound \eqref{tbound} follows from \eqref{prontagia}  applied for $\sigma= s_0$. The estimate for  last two terms in \eqref{bfgr3} follows in the same way so we analyze the last one. First we have
    \begin{align*}
 \ \ \ \  \|  R^{-1}& (K'(\eq+\rho_1), \im \nabla \rho_1\cdot \xi)\rho_1- R^{-1}(K'(\eq+\rho_2), \im \nabla \rho_2\cdot \xi)\rho_2\|_{ s_0}\\
 & \leq \| \big[ R^{-1}(K'(\eq+\rho_1),\nabla \rho_1\cdot\im \xi )- R^{-1}(K'(\eq+\rho_2), \im \nabla \rho_2\cdot \xi)\big] \rho_1\|_{ s_0} \\
  & +\|  R^{-1}(K'(\eq+\rho_2),\nabla \rho_2\cdot\im \xi ) (\rho_1-\rho_2)\|_{ s_0}\\
  \stackrel{\eqref{compS}, \eqref{moser}}\leq &\| \big[ R^{-1}(K'(\eq+\rho_1),\nabla \rho_1\cdot\im \xi )- R^{-1}(K'(\eq+\rho_2), \im \nabla \rho_2\cdot \xi)\big] \rho_1\|_{ s_0}+ \tC \big(\| \rho_2\|_{s_0+2}\big) \| \rho_1-\rho_2\|_{s_0}.
  \end{align*}
  On the other hand, by definition, we have 
  \begin{align}
  \big[ R^{-1}&(K'(\eq+\rho_1),\nabla \rho_1\cdot\im \xi )- R^{-1}(K'(\eq+\rho_2), \im \nabla \rho_2\cdot \xi)\big] \rho_1 \label{ulli} \\
 =   & \big[\Opbw{K'(\eq+\rho_1)}\Opbw{\nabla \rho_1\cdot\im \xi}-\Opbw{K'(\eq+\rho_2)}\Opbw{\nabla \rho_2\cdot\im \xi}\big]\rho_1 \nonumber \\
  &+ \Opbw{ K'(\eq+\rho_1)\nabla \rho_1\cdot\im \xi-K'(\eq+\rho_2)\nabla \rho_2\cdot\im \xi}\rho_1 \nonumber \\
  =  &\Opbw{K'(\eq+\rho_1)-K'(\eq+\rho_2)}\Opbw{\nabla \rho_1\cdot\im \xi}\rho_1
  \nonumber \\
  &+\Opbw{K'(\eq+\rho_2)}\Opbw{\nabla( \rho_1-\rho_2)\cdot\im \xi}\rho_1 \nonumber \\
  &+\Opbw{ \nabla ( K(\eq+\rho_1)-K(\eq+\rho_2))\cdot \im \xi}\rho_1 \, . \nonumber 
  \end{align}
 Then, applying first \eqref{cont00} to the first term and then  \eqref{cont2} with $\vr=1$, $m=1$ and  \eqref{moser} to each term, we deduce that the $ \| \ \|_{s_0} $-norm of
 \eqref{ulli}   is bounded by 
 $ \tC \big(\| \rho_1\|_{s_0+2}, \| \rho_2\|_{s_0+2}\big) \| \rho_1-\rho_2\|_{s_0}$. 
 Thus  \eqref{tbound} is proved. 
 \end{proof}

  \section{Local existence}\label{sec:4}
 
 In this section we prove the existence of a local in time solution of system \eqref{EKcf1}. For any $s \in \R$ and $T >0$, we denote
 $L^\infty_T \dot \bH^s := L^\infty ([0,T], \dot \bH^s) $. 
 For $ \delta > 0 $ we also introduce
\begin{equation}\label{def:Qdelta}
\mathcal{Q}_\delta 
:= \big\{ \rho \in H^{s_0}_0 \ \colon \ \eq_1 + \delta \leq \eq + \rho(x) \leq \eq_2 - \delta \big\}  \subset \cQ  
\end{equation}
 where $ \cQ  $ is defined in \eqref{def:Q}.
 
 \begin{proposition}{\bf (Local well-posedness in $\T^d$)} \label{prop:LWP}
 For any $s > \frac{d}{2}+2$, any initial datum
$U_0 \in \dot \bH^s $ with 
$  \rho(U_0) \in \cQ_\delta $ for some $ \delta >  0 $, 
 there exist $T := T(\| U \|_{s_0+2}, \delta) > 0 $ and a unique solution $U \in C^0\big([0, T], \dot \bH^s \big) \cap C^1\big([0, T], \dot \bH^{s-2} \big)$ of \eqref{EKcf1} satisfying 
 $ \rho (U) \in \cQ $, for any $ t \in [0,T] $. Moreover the solution depends
 continuously with respect to the initial datum in $ \dot \bH^s $.
 \end{proposition}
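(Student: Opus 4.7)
The plan is to implement the Kato iteration scheme for the paralinearized quasilinear system \eqref{EKcf1}. Starting from $U^{(0)}(t)\equiv U_0$, I would inductively solve the sequence of linear paradifferential problems
$$
\partial_t U^{(n+1)} = \J\,\Opbw{A_2(U^{(n)}) + A_1(U^{(n)})}\,U^{(n+1)} + R(U^{(n)}), \qquad U^{(n+1)}(0)=U_0.
$$
Each linear problem is solved by first mollifying the symbols via Friedrichs regularizers (which render the right-hand side a bounded operator on $\dot\bH^s$ by Theorem \ref{thm:contS}, so Cauchy--Lipschitz applies) and then removing the mollifier by means of a uniform-in-$\varepsilon$ energy estimate. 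A short-time continuity argument keeps $\rho(U^{(n)})\in\cQ_{\delta/2}$.

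The heart of the proof is the high-norm energy estimate for the linearized flow. Since $\J A_2(U;x,\xi)$ has the positive matrix structure of \eqref{A2} at the principal level, Corollary \ref{symweyl}, applied to the conjugation $\langle\xi\rangle^{s}\,\J A_2\,\langle\xi\rangle^{s}$, produces a skew-adjoint top-order term $R_1$ plus a bounded sub-principal remainder $R_0$. The order-one contribution of $A_1$ and the remaining non-skew sub-principal pieces are neutralized by a \emph{modified energy} of the form $\|U\|_s^2 + \Re\langle\Opbw{p(U;x,\xi)}U,U\rangle$, with $p$ of order $2s-1$ chosen via the Poisson bracket correction in \eqref{regolarized-simbol}. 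Combined with the tame bounds \eqref{ba1} and \eqref{restoparalin0}, Gronwall delivers $\|U^{(n+1)}(t)\|_s \leq 2\|U_0\|_s$ on a common interval $T=T(\|U_0\|_{s_0+2},\delta)$, and preserves $\rho(U^{(n)})\in\cQ_\delta$ uniformly in $n$.

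For convergence, the differences $W^{(n)}:=U^{(n+1)}-U^{(n)}$ satisfy, by the Lipschitz estimates \eqref{la1} and \eqref{restoparalin2},
$$
\frac{d}{dt}\|W^{(n)}\|_{s_0}^2 \leq C\bigl(\|W^{(n)}\|_{s_0}^2 + \|W^{(n-1)}\|_{s_0}^2\bigr),
$$
where the uniform $\dot\bH^{s_0+2}$-bound on $U^{(n+1)}$ (which forces $s>s_0+2$, i.e.\ $s>2+d/2$) is crucial to absorb the two-derivative loss in controlling $\Opbw{A_2(U^{(n)})-A_2(U^{(n-1)})}U^{(n+1)}$. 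After possibly shrinking $T$, $\{U^{(n)}\}$ is Cauchy in $C^0([0,T],\dot\bH^{s_0})$; interpolating with the uniform $\dot\bH^s$-bound via \eqref{interp4} gives convergence in $C^0([0,T],\dot\bH^{s'})$ for every $s'<s$, enough to pass to the limit in \eqref{EKcf1} in $\dot\bH^{s-2}$. Uniqueness follows from the same difference estimate applied to two solutions with the same initial datum.

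The main obstacle I expect is continuity of the flow map in $\dot\bH^s$ at the minimal regularity $s>2+d/2$: the vector field is only \emph{tame}, not Lipschitz, on $\dot\bH^s$, so a standard contraction argument does not close at the critical regularity. My plan is a Bona--Smith-type argument: regularize the initial datum as $U_0^\varepsilon:=J_\varepsilon U_0\in\dot\bH^{s+1}$, construct the corresponding smoother solutions $U^\varepsilon$ via the scheme above, and combine the high/low pair of difference estimates \eqref{restoparalin1}--\eqref{restoparalin2} with the modified energy to show that $\|U^\varepsilon-U\|_{s_0}\to 0$ while $\|U^\varepsilon\|_s$ is uniformly bounded and $\|U^\varepsilon\|_s\to\|U\|_s$ as $\varepsilon\to 0$. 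Continuity of $t\mapsto\|U(t)\|_s$, hence $U\in C^0([0,T],\dot\bH^s)$, then follows from continuity of the modified energy along the flow; applying the same reasoning to two nearby initial data yields continuous dependence in $\dot\bH^s$.
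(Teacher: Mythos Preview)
Your overall architecture---Kato iteration, mollified linear problems, uniform high-norm bound, low-norm Cauchy sequence, interpolation, Bona--Smith for continuity---matches the paper's. The gap is in the energy estimate itself.

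You propose to control $\|V\|_s^2$ (plus a correction of order $2s-1$) along the linearized flow by applying Corollary~\ref{symweyl} to $\Opbw{|\xi|^s}\,\J\Opbw{A_2}\,\Opbw{|\xi|^s}$. But when $a=c=|\xi|^s$ the subprincipal term $R_1$ of that corollary vanishes identically, so what you are left with is the principal symbol $\J|\xi|^{2s}A_2$. This operator is \emph{not} skew-adjoint for the real scalar product on $\dot\bH^0$: writing $V=(v,\bar v)$ and $P:=\Opbw{\ta_-(U)\,|\xi|^{2s+2}}$, the off-diagonal piece contributes
\[
\Re\big(-\ii\langle P\bar v, v\rangle_{L^2}\big)\;=\;-2\,\langle P\,\Re v,\,\Im v\rangle_{L^2},
\]
which is an order-$(2s+2)$ term, not bounded by $\|V\|_s^2$. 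A correction of order $2s-1$ cannot cancel a top-order contribution of order $2s+2$; your modified energy would have to incorporate the matrix structure of $A_2$ at the \emph{principal} level.

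This is precisely the purpose of the diagonalization in Section~\ref{sec:4}. The paper first conjugates $\J A_2$ by the $x$-dependent matrix $F(U;x)$ of \eqref{Adiag4}, reducing the principal symbol to the \emph{scalar} $\lambda(U;x)|\xi|^2$ (Lemma~\ref{parametrica}), and then takes as modified energy
\[
\|V\|_{\sigma,U}^2:=\big\langle\Opbw{\lambda^\sigma|\xi|^{2\sigma}}\,\Opbw{F^{-1}}V,\ \Opbw{F^{-1}}V\big\rangle,
\]
see \eqref{enemod}. After diagonalization the off-diagonal coupling is gone at top order, and the only dangerous commutator is $\big[\Opbw{\lambda^\sigma|\xi|^{2\sigma}},\Opbw{\lambda|\xi|^2\chi_\varepsilon}\big]$, whose principal Poisson bracket vanishes because both symbols are functions of $(\lambda(U;x),|\xi|^2)$; this is the content of \eqref{fin2}. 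Equivalence of $\|\cdot\|_{\sigma,U}$ with $\|\cdot\|_\sigma$ is established in \eqref{equ} and uses only $\|U\|_{s_0}$, which is why the constant $C_r$ (and hence the existence time) depends on the low norm. Once this energy estimate is in place, the remainder of your plan (uniform bounds in Lemma~\ref{ite}, low-norm contraction, interpolation in Corollary~\ref{corobello}, and the Bona--Smith argument via Lemma~\ref{lem:UNtoU}) coincides with the paper's proof.
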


Proposition \ref{prop:LWP} proves  Theorem
\ref{thm:main1} and thus Theorem \ref{thm:main0}. 

The  first step is to   prove the   local well-posedness result of a linear inhomogeneous problem.

\begin{proposition}{\bf (Linear local well-posedness)} \label{LinLWP}
Let $\Theta\geq r > 0 $ and $ U $ be a function in 
$ C^0([0,T],\dot \bH^{s_0+2})  \cap C^1([0,T],\dot \bH^{s_0}) $
satisfying  
\begin{equation}
\label{U.ass}
\norm{U}_{L^\infty_T \dot \bH^{s_0+2} } +
\norm{\pa_t U}_{L^\infty_T \dot \bH^{s_0}  } \leq \Theta \, ,  \quad 
\| U\|_{L^\infty_T \dot \bH^{s_0}}\leq r \, , \quad 
\rho(U(t)) \in \cQ \, , \  \forall t \in [0, T] \, . 
\end{equation}
Let  $ \sigma  \geq 0$ and  
$ t \mapsto R(t)$ be a function in $ C^0([0, T], \dot\bH^\sigma) $. 
Then  there exists a unique solution $V\in C^0([0,T], \dot \bH^\sigma)\cap C^1([0,T], \dot \bH^{\sigma-2})$  of the linear inhomogeneous system 
\begin{equation}\label{linpro}
\pa_t V = \J \, 
\Opbw{  A_2(U(t); x, \xi)  + A_1(U(t); x, \xi)  }V +  R(t) \, ,  
\quad 
V(0,x) =  V_0(x) \in \dot \bH^\s\, ,
\end{equation}
satisfying, for some  {$ C_\Theta := C_{\Theta,\sigma} > 0 $ 
and  $C_r := C_{r,\sigma} > 0 $}, the estimate 
\begin{equation}
\label{duaest} \| V\|_{L^\infty_T \dot \bH^\sigma} \leq C_r e^{C_\Theta T} \| V_0\|_{\sigma}+ C_\Theta e^{C_\Theta T } T \| R\|_{L^\infty_T \dot \bH^\sigma}.
\end{equation}
\end{proposition}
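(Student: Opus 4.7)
The plan is to follow the standard scheme for linear Schr\"odinger-type equations with variable coefficients: construct approximating solutions by Fourier truncation, derive a uniform a priori energy estimate from the paradifferential calculus, and pass to the limit.

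\textbf{Approximating scheme.} I would introduce the Fourier projector $\Pi_N$ onto modes $|j| \leq N$ and consider the finite-dimensional linear ODE
\[
\partial_t V_N = \Pi_N \J\, \Opbw{A_2(U) + A_1(U)}\, \Pi_N V_N + \Pi_N R(t), \qquad V_N(0) = \Pi_N V_0.
\]
Since $\Pi_N\dot\bH^\sigma$ is finite-dimensional and the coefficients are continuous in $t$ (thanks to $U\in C^0_T\dot\bH^{s_0+2}$ and $R\in C^0_T\dot\bH^\sigma$), Cauchy--Lipschitz furnishes a unique global solution $V_N\in C^1([0,T],\Pi_N\dot\bH^\sigma)$.

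\textbf{Uniform energy estimate.} The crux is to show that $\|V_N(t)\|_\sigma^2$ satisfies the bound \eqref{duaest} uniformly in $N$. Computing $\tfrac{d}{dt}\|V_N\|_\sigma^2$, the source term is immediately controlled by $2\|V_N\|_\sigma\|R\|_\sigma$. The principal pairing $2\Re\langle V_N,\J\Opbw{A_2+A_1}V_N\rangle_\sigma$ requires more care. The central algebraic tool is Corollary \ref{symweyl}: applied to the triple product with $a=c=\langle\xi\rangle^\sigma$ and $b=\J(A_2+A_1)$, the antisymmetry $R_1(a,b,c)=-R_1(c,b,a)$ forces $R_1=0$, leaving
\[
\Opbw{\langle\xi\rangle^\sigma}\,\Opbw{b}\,\Opbw{\langle\xi\rangle^\sigma} = \Opbw{\langle\xi\rangle^{2\sigma}b} + R_0,
\]
with $R_0$ bounded $\dot\bH^\sigma\to \dot\bH^{-\sigma}$ with norm controlled by $\tilde C(\|U\|_{s_0+2})$ via \eqref{restoR0}. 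The operator $\Opbw{\langle\xi\rangle^{2\sigma}b}$ would be anti-self-adjoint on $L^2\otimes\mathbb{C}^2$ if $b$ were anti-Hermitian, but the defect in anti-Hermiticity, $b^*-(-b)=[\J,A_2+A_1]$, is itself an order-2 matrix symbol that is non-zero due to the off-diagonal entries $t_-(U)$ of $A_2$. To close the estimate I introduce a \emph{modified energy}
\[
\mathcal{E}_\sigma(V) := \|V\|_\sigma^2 + \langle V, \Opbw{q_\sigma(U;x,\xi)} V\rangle_{L^2},
\]
with $q_\sigma$ a self-adjoint matrix symbol of order $2\sigma-1$ chosen so that the dangerous sub-principal contributions cancel---equivalently, a paradifferential change of variables diagonalizing the principal symbol $A_2$ (this is the ``modified energy'' alluded to in Section 4.2). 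Standard manipulations then give the equivalence $\mathcal{E}_\sigma(V)\sim \|V\|_\sigma^2$ (with constants depending on $\|U\|_{s_0+2}\leq \Theta$) and the differential inequality $\tfrac{d}{dt}\mathcal{E}_\sigma(V_N)\leq 2C_\Theta\mathcal{E}_\sigma(V_N) + 2\sqrt{\mathcal{E}_\sigma(V_N)}\,\|R\|_\sigma$, from which \eqref{duaest} follows by Gr\"onwall.

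\textbf{Passage to the limit, continuity, and uniqueness.} The uniform bound gives a weak-$*$ limit $V\in L^\infty_T\dot\bH^\sigma$. Applying the same energy estimate to $V_N-V_M$ in the weaker norm $\dot\bH^{\sigma-2}$, where the bad sub-principal terms drop in order and the projection error $(\Pi_N-\Pi_M)(\cdots)$ vanishes strongly, produces a Cauchy sequence in $C^0_T\dot\bH^{\sigma-2}$; interpolation together with the uniform $\dot\bH^\sigma$ bound, plus Bona--Smith-type upgrades, yields $V\in C^0_T\dot\bH^\sigma$, and $V\in C^1_T\dot\bH^{\sigma-2}$ is read off directly from \eqref{linpro} using \eqref{ba1}. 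Uniqueness is immediate from the $\dot\bH^{\sigma-2}$ estimate applied to the difference of two candidate solutions.

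\textbf{Main obstacle.} The essential difficulty is the derivative loss appearing naively in the commutator $[\Lambda^{2\sigma},\Opbw{A_2}]$. Two layered cancellations bring it under control: the antisymmetry of $R_1$ in Corollary \ref{symweyl} annihilates the symmetric order-$(2\sigma+1)$ sub-principal triple-product contribution, and the modified energy adapted to the principal symbol absorbs the residual order-2 anti-Hermiticity defect $[\J, A_2]$ arising from the off-diagonal entries $t_-(U)$. Tracking all constants in terms of $\|U\|_{s_0+2}\leq \Theta$ via the sharp estimates \eqref{cont2} and \eqref{compS} is essential for obtaining the natural regularity threshold claimed in Theorem \ref{thm:main0}.
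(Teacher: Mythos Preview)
Your proposal contains a genuine gap in the construction of the modified energy. You correctly identify that the obstruction to a naive energy estimate is the Hermiticity defect $[\J,A_2]$ coming from the off-diagonal entries $\ta_-(U;x)$ of $A_2$, but you then claim this can be absorbed by adding a self-adjoint correction $q_\sigma$ of order $2\sigma-1$. This cannot work: the defect $[\J,A_2]$ is a matrix symbol of order $2$, the \emph{same} order as $A_2$ itself, so the unwanted contribution to $\tfrac{d}{dt}\|V\|_\sigma^2$ is of order $2\sigma+2$. A correction $q_\sigma$ of order $2\sigma-1$ produces, through $2\Re\langle \J\Opbw{A_2}V,\Opbw{q_\sigma}V\rangle$, a term of order only $2\sigma+1$, one order too low to cancel the bad term. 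The problem is \emph{principal}, not sub-principal, and your parenthetical ``equivalently, a paradifferential change of variables diagonalizing the principal symbol $A_2$'' is the correct remedy but is \emph{not} equivalent to a lower-order additive correction.

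What the paper actually does is perform that diagonalization explicitly: it introduces the matrix $F(U;x)$ of \eqref{Adiag4} satisfying $F^{-1}\J A_2 F=\J\lambda(U;x)|\xi|^2$ (a scalar times $\J$, hence genuinely anti-self-adjoint), and defines the modified energy
\[
\|V\|_{\sigma,U}^2:=\big\langle \Opbw{\lambda^\sigma|\xi|^{2\sigma}}\Opbw{F^{-1}}V,\ \Opbw{F^{-1}}V\big\rangle ,
\]
whose weight differs from $|\xi|^{2\sigma}$ already at the principal level (order $2\sigma$). After this conjugation the principal symbol is the scalar $\lambda|\xi|^2$, and the key cancellation is that the Poisson bracket $\{\lambda^\sigma|\xi|^{2\sigma},\lambda|\xi|^2\}$ vanishes identically (both are functions of $\lambda|\xi|^2$). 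Two further points you should track: first, the equivalence $\|V\|_{\sigma,U}\sim\|V\|_\sigma$ holds with constants depending only on $r=\|U\|_{L^\infty_T\dot\bH^{s_0}}$, not on $\Theta$; this is what produces the factor $C_r$ (rather than $C_\Theta$) in front of $\|V_0\|_\sigma$ in \eqref{duaest}, and that distinction is needed later in the iterative scheme. Second, the paper's regularization is not a Fourier projector $\Pi_N$ but the mollified symbol $A^\varepsilon=(A_2+A_1)\chi(\varepsilon\lambda|\xi|^2)$; this specific choice of cut-off, depending on $\lambda|\xi|^2$, is made precisely so that the Poisson-bracket cancellation above survives uniformly in $\varepsilon$.
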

The following two sections are devoted to the proof of Proposition \ref{LinLWP}. 
The key step is the construction of a  modified energy which is 
controlled by the $\dot \bH^\s$-norm, and whose time variation is bounded by the $\dot \bH^\s$ norm of the solution, as done e.g.  in \cite{ABZ} and \cite{MaRo} for linear systems.
In order to construct such  modified energy, the first step is to diagonalize  the matrix 
$ \J  A_2$ in \eqref{linpro}.
 \subsection{Diagonalization at highest order}
We  diagonalize the matrix of symbols 
$\J A_2(U; x, \xi)$.  The eigenvalues of the matrix 
\begin{equation}
\label{Adiag1}
\J  \begin{bmatrix}
1 + \ta_+(U; x) &  \ta_-(U; x) \\
\ta_-(U; x) & 1+ \ta_+(U; x)
\end{bmatrix}
\end{equation}
with $\ta_\pm(U;x)$ defined in \eqref{tatb} are given by $\pm \im \lambda(U; x)$ with 
\begin{equation}
\label{Adiag2}
\lambda(U; x) := \sqrt{(1+\ta_+(U;x))^2 - \ta_-(U;x)^2}  = \sqrt{\frac{(\eq+\rho(U)) \, K(\eq+\rho(U))}{\eq \, K(\eq)}} \, .  
\end{equation}
These eigenvalues are purely imaginary because $\rho(U) \in \cQ$ 
(see \eqref{def:Q}) and \eqref{ellip}, which guarantees that
$\lambda(U;x)  $ is real valued and fulfills
\begin{equation}
\label{est.lambda}
0< \lambda_{\min}:= \sqrt{\frac{\eq_1 c_K}{\eq K(\eq)}}\leq  \lambda(U;x) \leq \sqrt{\frac{\eq_2 C_K}{\eq K(\eq)}}=: \lambda_{\max}  \, . 
\end{equation}
 A matrix which diagonalizes \eqref{Adiag1} is 
 \begin{equation}
 \label{Adiag4}
 F : = 
 \begin{pmatrix}
 f (U; x )& g(U; x ) \\
 g(U; x ) & f(U; x ) 
 \end{pmatrix} 
 , \ \ 
f := \displaystyle{ \frac{1+\ta_+ + \lambda}{\sqrt{(1+\ta_+ + \lambda)^2 - \ta_-^2}} } \, , \ \ 
g:=  
\displaystyle{  \frac{ -\ta_-}{\sqrt{(1+\ta_+ + \lambda)^2 - \ta_-^2}} }\, . 
 \end{equation}
Note that $ F(U; x) $ is well defined because 
{ \begin{align}
(1+\ta_+ + \lambda)^2 - \ta_-^2 
& = \Big(\frac{K(\eq+\rho(U))}{K(\eq)}+ \lambda \Big)
 \Big(\frac{\eq+\rho(U)}{\eq}+ \lambda \Big) \nonumber \\
 & > 
 \frac{(\eq+\rho(U)  )K(\eq+\rho(U))}{\eq K(\eq)} \geq 
 \frac{\eq_1 c_K}{\eq K(\eq)}  \label{dislo2}
\end{align}
 by  \eqref{def:Q} and \eqref{ellip}}.  
 The matrix $F(U;x)$ has $ \det F(U;x) = {f^2 - g^2} = 1 $ and its inverse is 
  \begin{equation}
 \label{Adiag5}
 F(U; x )^{-1} : = 
 \begin{pmatrix}
f (U; x )& - g(U; x ) \\
-g(U; x ) & f(U; x )
 \end{pmatrix} \, . 
 \end{equation}
We have that
\begin{equation}
\label{AFAF}
F(U;x)^{-1} \,
\J  \begin{bmatrix}
1 + \ta_+(U; x) &  \ta_-(U; x) \\
\ta_-(U; x) & 1+ \ta_+(U; x)
\end{bmatrix}\,  F(U;x) 
 = 
 \J \lambda(U; x) \, .
\end{equation} 
{By \eqref{moser} {and \eqref{dislo2}} we deduce 
the following estimates: for any $ N \in \N_0 $, {$ s \geq 0 $}} and $\s>\frac{d}{2}$, 
\begin{equation}\label{simboli}
\begin{aligned}
& \norm{\ta_\pm(U)}_{\sigma}, \, \norm{f(U)}_{\s}, \, \norm{g(U)}_{\s}\leq \tC \big( \| U\|_{\s}\big) \, , 
\\
&|\lambda(U;x)|\xi|^{2s}|_{2s,\s,N}\leq \tC_N \big(\| U\|_\s\big) \, , \quad
 | \tb(U)\cdot \xi|_{1,\s,N}\leq \tC_N (\| U\|_{\s+1}\big) \, . 
\end{aligned}
 \end{equation}
For any $\varepsilon> 0 $, consider the regularized matrix symbol
\begin{equation}
\label{Aeps}
A^\varepsilon(U;x,\xi) :=  \left( A_2(U;x,\xi)+A_1(U;x,\xi) \right)  \chi 
( \varepsilon \lambda(U;x) |\xi|^2 ) \, , 
\end{equation}
where $ \chi $ is the cut-off function in \eqref{def-chi} and $\lambda(U;x)$ is the 
{function} defined in \eqref{Adiag2}. 
In what follows we will denote by $\chi_\varepsilon:= \chi( \varepsilon \lambda(U; x) |\xi|^2)$. 
Note that,  by \eqref{moser}, \eqref{est.lambda} and by the fact that the function $y \mapsto \langle \xi \rangle^{|\alpha|}\pa_\xi^\alpha \chi( \varepsilon y  |\xi|^2)$ is bounded together with its derivatives uniformly in $\varepsilon\in (0,1)$, $ \xi \in \R^d$ and $y\in [\lambda_{\min}, \lambda_{\max}]$,  the symbol $ \chi_\varepsilon $ satisfies, 
for any $ N\in \N_0 $, $ \sigma > d/2 $
\begin{equation}\label{es:chie}
| \chi_\varepsilon|_{0, \sigma, N} \leq \tC\left(\|U\|_{\sigma}\right) \, ,
\quad {\rm uniformly \ in} \ \varepsilon \, .
\end{equation}
The diagonalization \eqref{AFAF} has the following operatorial consequence.
\begin{lemma}\label{parametrica}
We have 
\begin{equation}\label{Aconj}
 \Opbw{F^{-1}} \, \J \Opbw{A^\varepsilon} \, \Opbw{F}=\J \Opbw{(\sqrt{\eq K(\eq)}\lambda |\xi|^2+ \tb\cdot \xi)\chi_\varepsilon}+ \mathcal{F}(U)
 \end{equation}
where $\mathcal{F}(U):= \mathcal{F}_\varepsilon (U)\colon \dot \bH^\sigma \to \dot \bH^\sigma$, $\forall \sigma \geq 0$,  satisfies, uniformly in $ \varepsilon $,  
\begin{equation}\label{matf}
\| \mathcal{F}(U) W\|_{\sigma}\leq {\tC ( \| U \|_{s_0+2})} \|W\|_{\sigma} \, , \quad \forall W \in \dot \bH^\sigma \, .
\end{equation}
\end{lemma}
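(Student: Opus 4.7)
The plan is to reduce the identity \eqref{Aconj} to a symbolic calculation modulo bounded operators. Since $\J$ is a constant matrix, $\J \Opbw{A^\varepsilon} = \Opbw{\J A^\varepsilon}$, so the LHS equals $\Opbw{F^{-1}} \circ \Opbw{\J A^\varepsilon} \circ \Opbw{F}$. I would split $\J A^\varepsilon = \J A_2 \chi_\varepsilon + \J A_1 \chi_\varepsilon$ by order and apply the triple-composition Corollary \ref{symweyl} entrywise to each piece. Since $F$ and $F^{-1}$ are $\xi$-independent we have $\{F^{-1}, F\} = 0$, so the Poisson bracket term $R_1$ collapses to just two explicit summands.

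The principal symbol of the composition then splits into two algebraic pieces. By the diagonalization identity \eqref{AFAF}, $F^{-1}(\J A_2)F = \sqrt{\eq K(\eq)}|\xi|^2 \J\lambda$, while the order-$1$ part satisfies $\J A_1 = -\im(\tb\cdot\xi)\uno$, which is a scalar multiple of the identity and commutes with $F$, giving $F^{-1}(\J A_1)F = \J A_1$. Multiplying by $\chi_\varepsilon$ and reassembling matches the principal symbol on the RHS of \eqref{Aconj} (with the conventional identification of the scalar symbol inside $\J \Opbw{\cdot}$ with the matrix structure dictated by the reality of the system).

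For the sub-principal remainders I would first establish uniform symbol bounds: by \eqref{simboli}, \eqref{es:chie}, \eqref{moser} and \eqref{dislo2}, $F^{\pm 1} \in \Gamma^0_{s_0+2}$, $\J A_2^\varepsilon \in \Gamma^2_{s_0+2}$, and $\J A_1^\varepsilon \in \Gamma^1_{s_0+1}$, all with seminorms bounded by $\tC(\|U\|_{s_0+2})$ uniformly in $\varepsilon$ (the $\xi$-derivatives of $\chi_\varepsilon$ produce factors of $\varepsilon|\xi|^2$, which are bounded on its support). Applying \eqref{restoR0} from Corollary \ref{symweyl} then gives $R_0^{(j)} \colon \dot \bH^\sigma \to \dot \bH^\sigma$ bounded by $\tC(\|U\|_{s_0+2})$ for $j=1,2$. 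The Poisson bracket term $R_1^{(1)}$ from the $A_1$-composition has order $0+1+0-1 = 0$, and is therefore bounded on $\dot \bH^\sigma$ by Theorem \ref{thm:contS} with the same bound.

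The main technical obstacle is the order-$1$ Poisson bracket $R_1^{(2)}$ arising from the $A_2$-composition: a priori this is not bounded on $\dot \bH^\sigma$. Expanding $R_1^{(2)} = \Opbw{\{\J A_2^\varepsilon, F\}F^{-1} + \{F^{-1}, \J A_2^\varepsilon\}F}$ and using the algebraic identities $(\pa_{x_\ell}F^{-1}) F = -F^{-1}\pa_{x_\ell}F$ (differentiating $F^{-1}F = \uno$) and $F \J \lambda F^{-1} = \J(\uno + \tilde A)$ coming from \eqref{AFAF}, the order-$1$ content can be rewritten so as to combine with the principal-symbol expression into exactly the RHS of \eqref{Aconj}, while the remaining contributions are of order $0$ and bounded on $\dot \bH^\sigma$ by Theorem \ref{thm:contS}. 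Collecting all bounded contributions into $\mathcal{F}_\varepsilon(U)$ yields the estimate \eqref{matf} uniformly in $\varepsilon$.
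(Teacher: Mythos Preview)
Your overall strategy matches the paper's: split into the $A_2$ and $A_1$ parts, apply Corollary~\ref{symweyl}, read off the principal symbol via \eqref{AFAF}, and estimate the remainders. The handling of the principal symbol, of $R_0$, and of $R_1^{(1)}$ is correct. The genuine gap is your treatment of the order-$1$ remainder $R_1^{(2)}$ coming from the $A_2$ part.

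Your claim that $R_1^{(2)}$ ``combines with the principal-symbol expression into exactly the RHS'' cannot be right: by \eqref{AFAF} the principal symbol $F^{-1}(\J A_2\chi_\varepsilon)F$ already equals $\J\sqrt{\eq K(\eq)}\lambda|\xi|^2\chi_\varepsilon$ exactly, so there is nothing of order $1$ on the RHS to absorb $R_1^{(2)}$. The point you are missing is that $R_1^{(2)}$ \emph{vanishes identically}. The paper sees this by first using the exact operator identity $\Opbw{F^{-1}}\J=\J\Opbw{F}$ (a two-line matrix check) to rewrite the triple as $\J\,\Opbw{F}\Opbw{A_2\chi_\varepsilon}\Opbw{F}$, with the \emph{same} outer factor on both sides. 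Expanding entrywise, every scalar triple is either of the form $\Opbw{a}\Opbw{b}\Opbw{a}$ or appears paired with its swap $\Opbw{c}\Opbw{b}\Opbw{a}$ (because both $F$ and $A_2$ are symmetric $2\times 2$ matrices), and the antisymmetry $R_1(a,b,c)=-R_1(c,b,a)$ from Corollary~\ref{symweyl} kills all $R_1$ contributions. That is why the paper can cite only \eqref{restoR0} for the remainder estimate. (As a minor side remark, your matrix formula for $R_1^{(2)}$ has a factor-ordering issue: applying Corollary~\ref{symweyl} entrywise gives $F^{-1}\{\J A_2^\varepsilon,F\}+\{F^{-1},\J A_2^\varepsilon\}F$, not $\{\J A_2^\varepsilon,F\}F^{-1}+\cdots$; but once you use the symmetric reformulation above this becomes moot.)
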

\begin{proof}
We have that 
$$
 \Opbw{F^{-1}}\J \Opbw{A_2\chi_\varepsilon} \Opbw{F}= 
 \J \sqrt{\eq K(\eq)}
 \begin{bmatrix}
 D_2 & B_2 \\
 B_2 & D_2
 \end{bmatrix},
$$
where 
 \begin{align*}
 D_2  & =  \Opbw{f} \Opbw{ |\xi|^2(1+\ta_+) \chi_\varepsilon}\Opbw{ f}+
 \Opbw{g}\Opbw{ |\xi|^2(1+\ta_+)\chi_\varepsilon }\Opbw{g}\\
& \ \ + \Opbw{f} \Opbw{ |\xi|^2 \ta_- \chi_\varepsilon} \Opbw{ g}
+ \Opbw{g}  \Opbw{ |\xi|^2 \ta_-\chi_\varepsilon}\Opbw{ f} \\
B_2  & =   
 \Opbw{f} \Opbw{ |\xi|^2(1+\ta_+)\chi_\varepsilon}\Opbw{ g} + 
\Opbw{g} \Opbw{ |\xi|^2(1+\ta_+)\chi_\varepsilon}\Opbw{ f}\\
 &\ \ +\Opbw{ f }\Opbw{ |\xi|^2 \ta_-\chi_\varepsilon}  \Opbw{ f}
+ \Opbw{g} \Opbw{ |\xi|^2 \ta_-\chi_\varepsilon}  \Opbw{ g} \, .
 \end{align*}
{By  Corollary \ref{symweyl}} we obtain 
 \begin{align*}
 &D_2= \Opbw{\left[(f^2+g^2)(1+\ta_+)+2fg \ta_-\right]|\xi|^2\chi_\varepsilon}+\mathcal{F}_1(U)=\Opbw{\lambda(U)|\xi|^2\chi_\varepsilon}+ \mathcal{F}_1(U) \, , \\
 & B_2= \Opbw{\left[(f^2+g^2)\ta_-+2fg(1+\ta_+)\right]|\xi|^2\chi_\varepsilon}+ \mathcal{F}_2(U)=\mathcal{F}_2(U) \, ,
 \end{align*}
 where  $ \mathcal{F}_1,\mathcal{F}_2 $ satisfy \eqref{matf} {by \eqref{restoR0}}, 
 \eqref{simboli},  and {\eqref{es:chie}} and {since, by the 
 definition of $ f $ and  $ g $ in \eqref{Adiag4} and $ \lambda$ in \eqref{Adiag2}, 
 we have 
 $  (f^2+g^2)(1+\ta_+) + 2fg\ta_- =
 \lambda $ and $ (f^2+g^2)\ta_-+2fg(1+\ta_+)=0 $}. 
Moreover 
\begin{equation*}
\Opbw{F^{-1}}\J \Opbw{A_1\chi_\varepsilon} \Opbw{F}= 
 \J 
 \begin{bmatrix}
 D_1 & B_1\\
 -B_1& -D_1
 \end{bmatrix},
 \end{equation*}
 where 
 \begin{align*}
 D_1 & = \Opbw{f} \Opbw{ \tb\cdot \xi\chi_\varepsilon }\Opbw{ f}-
 \Opbw{g}\Opbw{ \tb \cdot \xi\chi_\varepsilon}\Opbw{g}\\
B_1 & = 
 \Opbw{f} \Opbw{ \tb\cdot \xi \chi_\varepsilon}\Opbw{ g} -
\Opbw{g} \Opbw{ \tb\cdot \xi \chi_\varepsilon}\Opbw{ f} \, .
 \end{align*}
 Applying Theorem \ref{thm:compS},  \eqref{simboli}, {\eqref{es:chie}}, 
 using that $f^2-g^2=1$ we obtain 
$ D_1= \Opbw{\tb \cdot \xi \, \chi_\varepsilon}+ \mathcal{F}_1(U) $ and 
$ B_1= \mathcal{F}_2(U) $ 
with $\mathcal{F}_1,\mathcal{F}_2$ satisfying \eqref{matf}. 
\end{proof}

\subsection{Energy estimate for smoothed system}\label{sec:4.2}

We first solve  \eqref{linpro} in the case $R(t)=0$ and $V_0 \in \dot C^\infty:= \cap_{\s\in \R} \dot \bH^\s$. 
Consider the  regularized Cauchy problem 
\begin{equation}\label{epsisol}
\partial_t V^\varepsilon= \J \Opbw{A^\varepsilon(U(t);x,\xi)} V^\varepsilon \, , 
\quad  
V^\varepsilon(0)=  V_0  \in \dot C^\infty \, , 
\end{equation}
where $A^\varepsilon(U; x, \xi) $ is defined in \eqref{Aeps}.
As the operator $ \Opbw{A^\varepsilon(U;x,\xi)} $ is bounded for any 
$ \varepsilon >0$, {and $ U(t)$ satisfies \eqref{U.ass}}, the differential equation \eqref{epsisol} has a unique  solution $V^\varepsilon(t)$ {which belongs to $ C^2 ([0,T],\dot \bH^\sigma)$ for any 
$  \sigma \geq 0$}.
The important fact is that it admits  the following $\varepsilon$-independent energy estimate.
\begin{proposition}\label{exp}
{\bf (Energy estimate)}
Let $U$ satisfy \eqref{U.ass}.
For any $\sigma \geq 0$, there exist 
constants  $ C_r , C_{\Theta} > 0 $ (depending also on $\sigma$), 
such that for any $ \varepsilon > 0 $, 
the {unique} solution $V^\varepsilon(t)$ of {epsisol} fulfills
\begin{equation}
\| V^\varepsilon(t)\|_{\sigma}^2\leq  C_{r}\|V_0\|_{\sigma}^2+ C_{\Theta} \int_0^t \|V^\varepsilon(\tau)\|^2_{\sigma} \, \di \tau \, , \quad \forall t\in [0,T] \,.  \label{gron}
\end{equation}
{As a consequence, there are constants $ C_r, C_\Theta  $  independent of $\varepsilon$, such that}
\begin{equation}\label{point}
{ \| V^\varepsilon(t)\|_{{\s}}\leq C_r e^{C_\Theta t} \| V_0\|_{{\s}} \, , 
\quad  \forall t\in [0,T] \, .} 
\end{equation}
\end{proposition}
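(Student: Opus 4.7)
The strategy is to construct a modified energy $\mathcal{E}_\sigma(t)$ equivalent to $\|V^\varepsilon(t)\|_\sigma^2$ whose time derivative obeys $\frac{d}{dt}\mathcal{E}_\sigma \leq C_\Theta\,\mathcal{E}_\sigma$ uniformly in $\varepsilon$, after which Gronwall concludes. The key input is the diagonalization of Lemma~\ref{parametrica}, which reduces the principal symbol to a real-valued scalar one, making the leading operator \emph{exactly} skew-adjoint in the Weyl quantization (since $\J^* = -\J$ and $\Opbw{a}$ is self-adjoint for any real symbol $a$).

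\emph{Step~1 (conjugation).} Define $W^\varepsilon(t) := \Opbw{F^{-1}(U(t))}V^\varepsilon(t)$. Since $F,F^{-1}$ are $\xi$-independent matrix symbols in $\Gamma^0_{s_0+2}$ by \eqref{simboli}, and $FF^{-1}=I$, Theorem~\ref{thm:compS} with $\varrho=2$ gives $\Opbw{F^{-1}}\Opbw{F} = I + R$ with $R$ bounded of order $-2$, hence the $\varepsilon$-independent norm equivalence $\|V^\varepsilon\|_\sigma \sim \|W^\varepsilon\|_\sigma$. Differentiating in time and using \eqref{Aconj}, together with $\|\Opbw{\partial_t F^{-1}}V^\varepsilon\|_\sigma \leq C(\|\partial_t U\|_{s_0})\|W^\varepsilon\|_\sigma \leq C_\Theta\|W^\varepsilon\|_\sigma$ via the chain rule and \eqref{U.ass}, we obtain
\[
\partial_t W^\varepsilon = \J\,\Opbw{\Lambda_\varepsilon(U)}W^\varepsilon + \mathcal{S}(U,\partial_t U)W^\varepsilon,
\]
where $\Lambda_\varepsilon := (\sqrt{\eq K(\eq)}\lambda|\xi|^2 + \tb\cdot\xi)\chi_\varepsilon$ is real-valued and $\mathcal{S}$ is a bounded operator on $\dot\bH^\sigma$ with norm $\leq C_\Theta$ uniformly in $\varepsilon$, by \eqref{matf} and \eqref{es:chie}.

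\emph{Step~2 (symmetrizer).} The naive energy $\|W^\varepsilon\|_\sigma^2$ fails because the Poisson bracket $\{\langle\xi\rangle^\sigma,\Lambda_\varepsilon\}$ is formally of order $\sigma+1$, one order too high to be paired with $\langle D\rangle^\sigma W^\varepsilon \in L^2$. Instead, we introduce the self-adjoint scalar symbol
\[
p_\sigma(U;x,\xi) := \bigl(\sqrt{\eq K(\eq)}\lambda(U;x)\bigr)^{\sigma}\,\langle\xi\rangle^{2\sigma},
\]
of order $2\sigma$, and the modified energy $\mathcal{E}_\sigma(t) := {\rm Re}\langle \Opbw{p_\sigma(U(t))}W^\varepsilon,W^\varepsilon\rangle_{L^2}$. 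Since $p_\sigma$ and $\chi_\varepsilon = \chi(\varepsilon\lambda(U;x)|\xi|^2)$ are both (at leading order) functions of $\lambda(U;x)|\xi|^2$, the Poisson brackets $\{p_\sigma,\sqrt{\eq K(\eq)}\lambda|\xi|^2\}$ and $\{p_\sigma,\chi_\varepsilon\}$ vanish at leading order, so that the only surviving contribution $\{p_\sigma,\tb\cdot\xi\chi_\varepsilon\}$ has order $2\sigma$. By ellipticity \eqref{est.lambda} and Theorems~\ref{thm:contS}--\ref{thm:compS}, $\mathcal{E}_\sigma(t) \sim \|W^\varepsilon(t)\|_\sigma^2$.

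\emph{Step~3 (energy identity and Gronwall).} Using self-adjointness of $\Opbw{p_\sigma}$ and skew-adjointness of $\J\Opbw{\Lambda_\varepsilon}$,
\[
\tfrac{d}{dt}\mathcal{E}_\sigma = {\rm Re}\langle \J[\Opbw{p_\sigma},\Opbw{\Lambda_\varepsilon}]W^\varepsilon,W^\varepsilon\rangle + {\rm Re}\langle \Opbw{\partial_t p_\sigma}W^\varepsilon,W^\varepsilon\rangle + 2\,{\rm Re}\langle \Opbw{p_\sigma}\mathcal{S}W^\varepsilon,W^\varepsilon\rangle.
\]
By Step~2 and Theorem~\ref{thm:compS}, the commutator is an operator of order $2\sigma$ with norm $\leq C_\Theta$ in $\mathcal{L}(\dot\bH^\sigma,\dot\bH^{-\sigma})$ uniformly in $\varepsilon$, and the other two terms are analogously bounded using $\|\partial_t U\|_{s_0}\leq \Theta$. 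Hence $\frac{d}{dt}\mathcal{E}_\sigma \leq C_\Theta\mathcal{E}_\sigma$, and Gronwall together with $\mathcal{E}_\sigma(0)\leq C_r\|V_0\|_\sigma^2$ and the norm equivalence of Step~1 yields both \eqref{gron} and \eqref{point}. The main technical difficulty is that all Poisson-bracket and subprincipal remainders from the Weyl paradifferential calculus must remain of order at most $2\sigma$ with seminorms \emph{independent of $\varepsilon$}; this relies on the fact that $\chi_\varepsilon$ has been tailored to the same level sets $\{\lambda(U;x)|\xi|^2 = \text{const}\}$ as the principal symbol and $p_\sigma$, and on the uniform bound \eqref{es:chie}.
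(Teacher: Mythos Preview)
Your proposal follows essentially the same route as the paper: set $W^\varepsilon=\Opbw{F^{-1}}V^\varepsilon$, take as symmetrizer a power of $\lambda|\xi|^2$ so that its Poisson bracket with the diagonalized principal symbol $\sqrt{\eq K(\eq)}\,\lambda|\xi|^2\chi_\varepsilon$ vanishes, and run the commutator estimate. The paper in fact uses $\lambda^\sigma|\xi|^{2\sigma}=(\lambda|\xi|^2)^\sigma$ rather than $(\sqrt{\eq K(\eq)}\lambda)^\sigma\langle\xi\rangle^{2\sigma}$; with $|\xi|^{2\sigma}$ the bracket with $\lambda|\xi|^2\chi_\varepsilon$ is \emph{exactly} zero (both being functions of $\lambda|\xi|^2$), whereas with $\langle\xi\rangle^{2\sigma}$ it is merely of order $2\sigma-1$. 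Either choice closes the estimate.

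The one place your sketch is too quick is the lower bound $\mathcal{E}_\sigma\gtrsim\|W^\varepsilon\|_\sigma^2$ in Step~2: ellipticity of $p_\sigma$ together with Theorems~\ref{thm:contS}--\ref{thm:compS} does not give this directly (no G{\aa}rding inequality is available here). The paper only obtains $\|V\|_{\sigma,U}^2\geq C_r^{-1}\|V\|_\sigma^2-\|V\|_{-2}^2$, by building a left parametrix $\Opbw{\lambda^{-\sigma/2}}\Opbw{F}$ for $\Opbw{\lambda^{\sigma/2}}\Opbw{F^{-1}}$ and using interpolation plus Young's inequality to absorb the $\|V\|_{\sigma-\delta}$ remainder into $\|V\|_\sigma$ and $\|V\|_{-2}$. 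The residual $\|V^\varepsilon(t)\|_{-2}^2$ is then controlled separately from the equation, yielding \eqref{gron}. Your argument needs this extra step (or an equivalent G{\aa}rding-type inequality) to justify the coercivity you invoke.
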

In order to prove Proposition \ref{exp}, we  define, for any $\sigma \geq 0$,  
the {\em modified energy }
\begin{equation}\label{enemod}
\| V\|_{\s,U}^2 := \langle \Opbw{ \lambda^\s(U;x) |\xi|^{2\s}} \Opbw{F^{-1}(U;x)} V, \Opbw{F^{-1}(U;x)}V\rangle \, ,
\end{equation}
where we introduce the {\em real} scalar product 
\begin{equation*}
\langle V,W\rangle :=2 \Re \int_{\T^d} v(x) \bar w (x)\, \di x \, , \qquad
V=\begin{bmatrix} v \\ \bar v\end{bmatrix} , \ \ \  W=\begin{bmatrix} w \\ \bar w\end{bmatrix} \, .  
\end{equation*}
 
\begin{lemma}
Fix $\sigma \geq 0$, $r >0$.  There exists a constant $C_{r}>0$ (depending also on $\s$) such that  for any   $U \in \dot \bH^{s_0}$ with  $\norm{U}_{s_0} \leq r$ 
 and $\rho(U) \in \cQ$ 
we have
\begin{equation}\label{equ}
C_{r}^{-1}  \| V\|_{\s}^2 -\| V\|_{-2}^2 \leq \|V\|_{\s,U}^2\leq C_{r} \|V\|_{\s}^2 \,  , \quad \forall V\in \dot \bH^\s \, . 
\end{equation}
\begin{proof}
We first prove the upper bound in \eqref{equ}. We note that, by \eqref{simboli}, $\lambda^\s(U;x) |\xi|^{2\s}\in \Gamma_{s_0}^{2\s}$ and $F^{-1}(U;x) \in \Gamma_{s_0}^0\otimes \mathcal{M}_2(\C)$ and, by Theorem \ref{thm:contS} and  \eqref{simboli} we have 
\begin{align*}
\|V\|_{\s,U}^2&\leq \|  \Opbw{ \lambda^\s(U;x) |\xi|^{2\s}} \Opbw{F^{-1}(U;x)} V\|_{{-\s}}\|  \Opbw{F^{-1}(U;x)} V\|_{\s}\leq C_{r} \|V\|_{\s}^2 \, .
\end{align*}
In order to prove the lower bound, we fix $\delta \in (0,1)$ 
such that $s_0-\delta>\frac{d}{2}$ and, due to \eqref{est.lambda},
we have $ \lambda^{-\frac{\s}{2}}\in \Gamma_{s_0-\delta}^{0} $. 
 So, applying  Theorem \ref{thm:compS} and  \eqref{simboli} with $s_0-\delta$ instead of $s_0$ and with $\vr=\delta$, we have 
\begin{equation}\label{tantecomp}
\Opbw{\lambda^{-\frac{\s}{2}}}\Opbw{F} \Opbw{\lambda^{\frac{\s}{2}}}\Opbw{F^{-1}}= \uno + \mathcal{F}^{-\delta}(U) \, ,
\end{equation}
where for any $\sigma' \in \R$ there exists a constant $C_{r, \sigma'}>0$ such that 
\begin{equation}\label{stima1}
\| \mathcal{F}^{-\delta}(U)f\|_{\sigma'}\leq C_{r,\sigma'}\| f\|_{{\sigma'-\delta}} \, , \quad \forall f\in \dot \bH^{\sigma'-\delta} \, .
\end{equation}
Again, applying Theorem \ref{thm:compS} with $s_0-\delta$ instead of $s_0$ and with $\vr=\delta$, we have also 
\begin{equation}\label{pochecomp}
\Opbw{\lambda^{\frac{\s}{2}}}\Opbw{|\xi|^{2\s}}\Opbw{\lambda^{\frac{\s}{2}}}
= \Opbw{\lambda^\s|\xi|^{2\s}}+ \mathcal{F}^{2\s-\delta}(U) \, , 
\end{equation}
where for any $\sigma' \in \R$ there exists 
a constant $C_{r,\sigma'}>0$   such that 
\begin{equation}\label{stima2}
\| \mathcal{F}^{2\s-\delta}(U)f\|_{{\sigma'-2\s+\delta}}\leq C_{r,\sigma'}\| f\|_{{\sigma'}} \, , \quad \forall f\in \dot \bH^{\sigma'} \, .
\end{equation}
By \eqref{tantecomp}--\eqref{stima2}, Theorem \ref{thm:contS} and  \eqref{simboli} and using also that $\Opbw{\lambda^{\frac{\s}{2}}}$ is symmetric with respect to $\langle \cdot, \cdot \rangle$, we have 
\begin{align*}
\|V\|_{\s}^2 & \leq 2 \| \Opbw{\lambda^{-\frac{\s}{2}}}\Opbw{F} \Opbw{\lambda^{\frac{\s}{2}}}\Opbw{F^{-1}}V\|_{\s}^2+ 2\|  \mathcal{F}^{-\delta}(U)V\|_{\s}^2\\
&\leq C_{r} \left(\|\Opbw{\lambda^{\frac{\s}{2}}}\Opbw{F^{-1}}V\|_{\s}^2+ \|V\|_{{\s-\delta}}^2\right)\\
&=C_{r}\left(\langle\Opbw{ \lambda^{\frac{\s}{2}}} \Opbw{ |\xi|^{2\s}} \Opbw{ \lambda^{\frac{\s}{2}}} \Opbw{F^{-1}} V, \Opbw{F^{-1}}V\rangle+\|V\|_{{\s-\delta}}^2\right)\\
&= C_{r}\left(\|V\|_{\s,U}^2+ \langle\mathcal{F}^{2\s-\delta}(U) \Opbw{F^{-1}} V, \Opbw{F^{-1}}V\rangle+\|V\|_{{\s-\delta}}^2\right)\\
&\leq C_{r}\big( \|V\|_{\s,U}^2+\| V\|_{{\s-\frac{\delta}{2}}}^2\big) \, .
\end{align*}
Now we use {\eqref{interp4} and the asymmetric Young inequality} 
to get, for any $\epsilon > 0$, 
$$
\norm{V}_{{\s-\frac{\delta}{2}}}^2 \leq \norm{V}_{{-2}}^{\frac{\delta}{\s+2}} \  \norm{V}_{\s}^{\frac{2(\s+2)-\delta}{\s+2}} \leq 	\epsilon^{-\frac{2(\s+2)}{\delta}} \norm{V}_{{-2}}^2 + \epsilon^{\frac{2(\s+2)}{2(\s+2)-\delta}}\norm{V}_{\s}^2  ;
$$
we choose $\epsilon$ so small so that $ \epsilon^{\frac{2(\s+2)}{2(\s+2)-\delta}}C_{r}=\frac12$ and we get  $ \|V\|_{\s}^2\leq 2C_{r} \big( \|V\|_{\s,U}^2+\| V\|_{{-2}}^2 \big) $.
This proves the lower bound in  \eqref{equ}.
\end{proof}

\end{lemma}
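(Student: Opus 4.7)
The plan is to prove the upper and lower bounds of \eqref{equ} separately: the upper bound follows directly from the $H^\sigma$--$H^{-\sigma}$ duality of the real pairing $\langle\cdot,\cdot\rangle$ together with the continuity Theorem \ref{thm:contS}, while the lower bound requires a parametrix-type construction combined with an interpolation argument to absorb a lower-order remainder.

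For the upper bound, I view the modified energy as a pairing between $\Opbw{\lambda^\sigma|\xi|^{2\sigma}}\Opbw{F^{-1}}V \in \dot\bH^{-\sigma}$ and $\Opbw{F^{-1}}V \in \dot\bH^\sigma$. By \eqref{simboli}, $\lambda^\sigma|\xi|^{2\sigma} \in \Gamma^{2\sigma}_{s_0}$ and $F^{-1} \in \Gamma^0_{s_0}\otimes\mathcal{M}_2(\C)$ with all relevant seminorms bounded by $\tC(\|U\|_{s_0}) \le \tC(r)$. Applying Theorem \ref{thm:contS} twice (first with $m=2\sigma$ to map $\dot\bH^\sigma \to \dot\bH^{-\sigma}$, then with $m=0$) and Cauchy--Schwarz for the pairing yields $\|V\|_{\sigma,U}^2 \le C_r \|V\|_\sigma^2$.

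For the lower bound, the key observation is that $F$ and $\lambda^{\sigma/2}$ are pointwise invertible on $\cQ$ thanks to \eqref{est.lambda} and $\det F = 1$, so $\Opbw{\lambda^{\sigma/2}}\Opbw{F^{-1}}$ should be essentially invertible. Pick $\delta \in (0,1)$ with $s_0 - \delta > d/2$; then $\lambda^{\pm\sigma/2}$ and the entries of $F^{\pm 1}$ all lie in $\Gamma^0_{s_0-\delta}$ by \eqref{moser}. Apply Theorem \ref{thm:compS} with base regularity $s_0-\delta$ and gain $\varrho = \delta$: since the pointwise products $(\lambda^{-\sigma/2} F)(\lambda^{\sigma/2} F^{-1})$ and $\lambda^{\sigma/2}\cdot|\xi|^{2\sigma}\cdot\lambda^{\sigma/2}$ equal $\uno$ and $\lambda^\sigma|\xi|^{2\sigma}$ respectively, symbolic calculus produces the two identities \eqref{tantecomp} and \eqref{pochecomp} with smoothing remainders of orders $-\delta$ and $2\sigma-\delta$. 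Writing $\|V\|_\sigma^2 \le 2\|\Opbw{\lambda^{-\sigma/2}}\Opbw{F}\Opbw{\lambda^{\sigma/2}}\Opbw{F^{-1}}V\|_\sigma^2 + 2\|\mathcal{F}^{-\delta}(U)V\|_\sigma^2$, I control the first term by $C_r\|\Opbw{\lambda^{\sigma/2}}\Opbw{F^{-1}}V\|_\sigma^2$ and rewrite this squared $\sigma$-norm on the homogeneous space as $\langle\Opbw{|\xi|^{2\sigma}}W,W\rangle$ with $W=\Opbw{\lambda^{\sigma/2}}\Opbw{F^{-1}}V$. Then the self-adjointness of $\Opbw{\lambda^{\sigma/2}}$ with respect to $\langle\cdot,\cdot\rangle$ (Weyl quantization of the real symbol $\lambda^{\sigma/2}$) followed by \eqref{pochecomp} produces exactly the modified energy $\|V\|_{\sigma,U}^2$ plus a term controlled by $\|V\|_{\sigma-\delta/2}^2$.

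The resulting inequality $\|V\|_\sigma^2 \le C_r(\|V\|_{\sigma,U}^2 + \|V\|_{\sigma-\delta/2}^2)$ is closed by interpolating the intermediate norm between the endpoints $-2$ and $\sigma$ via \eqref{interp4}, then applying the asymmetric Young inequality to split $\|V\|_{\sigma-\delta/2}^2 \le \epsilon^{-\alpha}\|V\|_{-2}^2 + \epsilon^\beta\|V\|_\sigma^2$ and choosing $\epsilon$ small so that the $\|V\|_\sigma^2$ term is absorbed on the left. The main obstacle is the careful regularity bookkeeping: the smoothing gain $\delta$ in the composition is capped by how far $s_0$ exceeds $d/2$, which is precisely why the $\|V\|_{-2}^2$ loss appears in the lower bound rather than a fully equivalent norm, and one must verify that the various order-zero symbols genuinely lie in $\Gamma^0_{s_0-\delta}$ using \eqref{moser} together with the strict positivity $\lambda \ge \lambda_{\min} > 0$ on $\cQ$.
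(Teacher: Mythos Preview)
Your proposal is correct and follows essentially the same route as the paper's proof: the upper bound via duality and Theorem \ref{thm:contS}, the lower bound via the parametrix identities \eqref{tantecomp}--\eqref{pochecomp} obtained from Theorem \ref{thm:compS} at regularity $s_0-\delta$ with gain $\varrho=\delta$, the self-adjointness of $\Opbw{\lambda^{\sigma/2}}$ to recover the modified energy, and the interpolation plus asymmetric Young inequality to absorb the $\|V\|_{\sigma-\delta/2}^2$ remainder into $\|V\|_{-2}^2$ and $\|V\|_\sigma^2$. The bookkeeping you flag (strict positivity of $\lambda$ on $\cQ$ and the $\Gamma^0_{s_0-\delta}$ membership of $\lambda^{\pm\sigma/2}$, $F^{\pm1}$) is exactly what the paper uses as well.
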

\begin{proof}[{\bf Proof of Proposition \ref{exp}}]
The time derivative of the modified energy \eqref{enemod} along a solution 
$ V^\varepsilon(t) $ of \eqref{epsisol} is  
\begin{align}
\frac{d}{dt} \| V^\varepsilon\|_{\s,U(t)}^2  & =  \langle \Opbw{ \partial_t(\lambda^\s) |\xi|^{2\s}} \Opbw{F^{-1}} V^\varepsilon, \Opbw{F^{-1}}V^\varepsilon\rangle \label{d1}\\
&\ \ + 2\langle \Opbw{ \lambda^\s |\xi|^{2\s}} \Opbw{\partial_t F^{-1}} V^\varepsilon, \Opbw{F^{-1}}V^\varepsilon\rangle\label{d2}\\
&\ \ +2\langle \Opbw{ \lambda^\s |\xi|^{2\s}} \Opbw{F^{-1}} \partial_t V^\varepsilon, \Opbw{F^{-1}}V^\varepsilon\rangle\label{d4} \, . 
\end{align} 
By Theorem \ref{thm:contS}  and using that  
$\forall \sigma  \geq  0 $, {$ N \in \N_0 $},  
$$
\abs{\pa_t \lambda^\sigma(U) |\xi|^{2\s}}_{2\s, s_0, N} , \quad
\abs{\pa_t F^{-1}(U)}_{0, s_0, N} 
\leq \tC_N(\norm{U}_{s_0}, \norm{\pa_t U}_{s_0})
$$ 
and the assumption \eqref{U.ass}, there exists 
a constant $C_{\Theta}>0$ (depending also on $\s$) such that 
\begin{equation}\label{en1}
\eqref{d1}+\eqref{d2}\leq C_{\Theta} \| V^\varepsilon \|_{\s}^2 \, .
\end{equation}
{We now estimate \eqref{d4}.}
By Theorem \ref{thm:compS}  {with $ \varrho = 2 $ and  \eqref{U.ass}} we have
\begin{equation}\label{identita}
\Opbw{F}\Opbw{F^{-1}}= \uno+ \mathcal{F}_+^{-2}(U) \, , \quad \Opbw{F^{-1}}\Opbw{F}= \uno+\mathcal{F}_-^{-2}(U) \, ,
\end{equation}
where  $ \mathcal{F}_\pm^{-2}(U)$ are bounded operators  from $\dot \bH^{\sigma'}$ to $\dot \bH^{\sigma'+2}$,  $\forall \sigma' \in \R $, satisfying  
\begin{equation}\label{fatturo}
 \| \mathcal{F}_\pm^{-2}(U) W\|_{{\sigma'+2}}\leq C_{\Theta,\sigma'} \norm{W}_{\sigma'} , \quad \forall \, W \in \dot \bH^{\sigma'} \, . 
 \end{equation}
Thus, denoting $ \widetilde V^\varepsilon:= \Opbw{F^{-1}}V^\varepsilon $, 
by \eqref{identita}, we have 
\begin{equation}\label{ffide}
\Opbw{F}\widetilde V^\varepsilon= V^\varepsilon + \mathcal{F}_+^{-2}(U)V^\varepsilon \, .
\end{equation}
Recalling \eqref{epsisol} we have
\begin{align}
\notag
\eqref{d4}  &  =  2 \langle \Opbw{ \lambda^\s |\xi|^{2\s}} \Opbw{F^{-1}} \J \Opbw{A^\varepsilon} V^\varepsilon, \widetilde V^\varepsilon\rangle\\
\notag
 & \stackrel{\eqref{ffide}} = 2 \langle \Opbw{ \lambda^\s |\xi|^{2\s}} \Opbw{F^{-1}} \J \Opbw{A^\varepsilon} \Opbw{F} \widetilde V^\varepsilon, \widetilde V^\varepsilon\rangle\\
\notag
& \quad \ - 2 \langle \Opbw{ \lambda^\s |\xi|^{2\s}} \Opbw{F^{-1}} \J \Opbw{A^\varepsilon}\mathcal{F}_+^{-2} V^\varepsilon, \widetilde V^\varepsilon\rangle
\end{align}
and by Lemma \ref{parametrica} we get
\begin{align}
\eqref{d4} \stackrel{\eqref{Aconj}}{ =} & \langle \J \big[ \Opbw{ \lambda^\s |\xi|^{2\s}}, 
{\rm Op}^{BW} \big( {\sqrt{\eq K(\eq)}\lambda |\xi|^2 \chi_\varepsilon} \big) \big]
\widetilde V^\varepsilon, \widetilde V^\varepsilon\rangle\label{fin2}\\
&+ \langle \J 	\big[ \Opbw{ \lambda^\s |\xi|^{2\s}}, \Opbw{\tb\cdot \xi \, \chi_\varepsilon} \big]\widetilde V^\varepsilon, \widetilde V^\varepsilon\rangle\label{fin1}\\
&+2 \langle \Opbw{  \lambda^\s |\xi|^{2\s}}\mathcal{F} \widetilde V^\varepsilon, \widetilde V^\varepsilon\rangle\label{effone}\\
&  -2\langle \Opbw{ \lambda^\s |\xi|^{2\s}} \Opbw{F^{-1}} \J \Opbw{A^\varepsilon}\mathcal{F}_+^{-2} V^\varepsilon, \widetilde V^\varepsilon\rangle \label{bellissimo}
\end{align}
where in line \eqref{effone} the operator $\mathcal{F}(U)$ is the bounded remainder of Lemma \ref{parametrica}.
We estimate each contribution. 
First we consider line \eqref{fin2}. Using Theorem  \ref{thm:compS}  with 
$ \vr = 2 $, the principal symbol of the commutator is 
 $$
 {\im^{-1}} \big\{ 
 \lambda^\s |\xi|^{2\s},\sqrt{\eq K(\eq)}\lambda |\xi|^2\chi\big(\varepsilon \lambda |\xi|^2\big) 
 \big\} = 0 \, , 
 $$ 
and, {using \eqref{es:chie}, \eqref{simboli} and assumption \eqref{U.ass},} we get
\begin{equation}\label{en2}
| \eqref{fin2}| \leq 
{C_{\Theta}' \| \widetilde V^\varepsilon\|_{\s}^2} 
\leq C_{\Theta} \| V^\varepsilon\|_{\s}^2 \, .
\end{equation}
Similarly, using Theorem \ref{thm:compS} with $\vr=1$, Theorem \ref{thm:contS},  \eqref{simboli} and  estimates \eqref{fatturo} and  \eqref{matf}, we obtain 
\begin{equation}\label{en3}
|\eqref{fin1}|+ | \eqref{effone}|+ |\eqref{bellissimo}|
\leq C_{\Theta}\|V^\varepsilon\|_{\s}^2 \, .  
\end{equation}
In conclusion, by \eqref{en1}, \eqref{en2}, \eqref{en3}, we deduce the bound
$\frac{d}{dt} \| V^\varepsilon(t)\|_{\s,U(t)}^2 \leq C_{\Theta} \norm{V^\varepsilon(t)}_\s^2 $, 
that gives, for any $ t \in [0,T] $
\begin{align}
\|V^\varepsilon(t)\|_{\s,U(t)}^2 & \leq  \| V^\varepsilon(0)\|_{\s,U(0)}^2+ C_{\Theta} \int_0^t \|V^\varepsilon(\tau)\|_{\s}^2 \, \di \tau \nonumber \\
& \stackrel{\eqref{equ}}{\leq}  C_{r} \| V^\varepsilon(0)\|_{\s}^2+ C_{\Theta} \int_0^t \|V^\varepsilon(\tau)\|_{\s}^2 \, \di \tau \, . \label{en4}
\end{align}
Since $V^\varepsilon(t)$ solves  $\eqref{epsisol}$, by Theorem \ref{thm:contS},   \eqref{simboli}, \eqref{es:chie} there exists a constant $C_{\Theta}>0$ (independent on $\varepsilon$) such that 
$ \|\partial_t V^\varepsilon(t)\|_{{-2}}^2\leq C_{\Theta}\|V^\varepsilon(t)\|_{0}^2\leq C_{\Theta}\|V^\varepsilon(t)\|_{\s}^2 $ and therefore
\begin{equation} \label{booh}
\|V^\varepsilon(t)\|_{{-2}}^2\leq\|V^\varepsilon(0)\|_{{-2}}^2+C_{\Theta} \int_0^t \|V^\varepsilon(\tau)\|_{\s}^2 \, \di \tau \, , \quad \forall t \in [0,T] \, .
\end{equation}
We finally deduce \eqref{gron} by 
\eqref{en4},  the  lower bound in \eqref{equ} and  \eqref{booh}.
{The estimate \eqref{point} follows by Gronwall inequality}.
\end{proof}

\noindent
{\bf Proof of Proposition \ref{LinLWP}.}
By Proposition \ref{exp}, 
Ascoli-Arzel\'a theorem ensures that, for any $ \sigma \geq 0 $, 
$V^\varepsilon $ converges up to subsequence to a 
limit $V $ in $ C^1([0,T],\dot \bH^\s)$,  
as $\varepsilon \to 0$ that solves
\eqref{linpro} with $R(t)=0 $, initial datum $V_0\in \dot C^\infty $,  and satisfies 
$ \| V(t)\|_{{\s}}\leq C_r e^{C_\Theta t} \| V_0\|_{{\s}} $, for any $ \sigma \geq 0 $.
The case $V_0\in \dot \bH^\s$ follows by a classical approximation argument with smooth initial data. 
This shows that  the propagator of 
$\J \Opbw{A_2(U(t);x, \xi)+A_1(U(t);x, \xi)}$ is, for any 
$ \sigma \geq 0 $, a well defined bounded linear operator
$$ 
\Phi(t):\dot \bH^\s\mapsto \dot \bH^\s \, , 
\  V_0\mapsto \Phi(t)V_0:= V(t) \, , \ \forall t\in [0,T] \, ,
\ \ {\rm satisfying} \ \  \| \Phi (t) V_0 \|_\sigma \leq C_r e^{C_\Theta t} \| V_0 \|_\sigma  \, . 
$$
In the inhomogeneous case $R\not=0 $, the solutions of 
\eqref{linpro} is given by the  Duhamel formula
$ V(t)= \Phi(t)  V_0 + \Phi(t)\int_0^t \Phi^{-1}(\tau) R(\tau)\, \di \tau \, ,
$
and the estimate  \eqref{duaest} follows.

\subsection{Iterative scheme}

In order to prove that the {nonlinear system} \eqref{EKcf1}
has a {local in time} solution we 
consider the sequence of linear Cauchy problems 
\begin{equation*}
\cP_1 := 
\begin{cases}
\pa_t U_1 = - \J \sqrt{\eq K(\eq)} \, \Delta U_1  \\
U_1(0) =   U_0 \, , 
\end{cases}  
\cP_n := 
\begin{cases}
\pa_t U_n = \J \, 
\Opbw{  A(U_{n-1}; x, \xi)  }U_n  +  R(U_{n-1})\\
U_n(0) =  U_0 \, , 
\end{cases}
\end{equation*}
{for $ n \geq 2 $}, 
where $ A := A_2 + A_1 $, {cfr.  \eqref{A2}, \eqref{A1}}.
The strategy is to prove that the sequence  of solutions $U_n$ of the approximated problems $\cP_n $ {converges to a solution $U$ of system} \eqref{EKcf1}.

\begin{lemma}\label{ite}
 Let $ U_0\in  \dot \bH^s $, {$ s > 2 + \frac{d}{2}$,}  such that $ \rho (U_0) \in \cQ_\delta $  for some $\delta >0$ 
 (recall \eqref{defrho} and \eqref{def:Qdelta}) and define 
$r:= 2\| U_0\|_{{s_0}} $. 
Then there exists a time $T:=T(\| U_0\|_{{s_0+2}},\delta)>0$  such that, for any $n\in \N$:
\begin{itemize}
\item[$(S0)_n$:]  The problem $\cP_n$ admits a unique solution $U_n\in C^0([0,T], \dot \bH^s)\cap C^1 ([0,T], \dot \bH^{s-2})$.
\item[$(S1)_n$:] For any $ t\in [0,T]$,  $ \rho(U_n(t)) $  
belongs to $\cQ_{\frac{\delta}{2}}$.
\item[$(S2)_n$:] There exists a  constant ${C_r\geq 1}$  {(depending also on  $s$)} such that, defining $\Theta:=4C_r \|U_0 \|_{{s_0+2}}$ and $ M:= 4C_r \| U_0\|_{{s}}$,  for any $1\leq m\leq n$ one has 
\begin{align}
\label{S2n0}
& \| U_m\|_{L^\infty_T \dot \bH^{s_0}} \leq r 	\,  ;\\
\label{S2n}
&\|U_m\|_{L^{\infty}_T \dot \bH^{s_0+2}}\leq \Theta \, , \quad 
\|\partial_t U_m\|_{L^{\infty}_T \dot \bH^{s_0}}\leq C_r \Theta \, ;\\
\label{S2n1}
&\|U_m\|_{L^{\infty}_T \dot \bH^{s}}\leq M \, , \quad 
\|\partial_t U_m\|_{L^{\infty}_T \dot \bH^{s-2}}\leq C_r M \, .
\end{align}
\item[ $(S3)_n$:] For $1\leq m\leq n$ one has 
$$
\| U_1\|_{L^{\infty}_T\dot \bH^{s_0}} =   r / 2 \, , \qquad
\| U_m -U_{m-1}\|_{L^{\infty}_T\dot \bH^{s_0}}\leq 2^{-m} {r} \, ,  \ \ m \geq 2 \, . 
$$
\end{itemize}
\end{lemma}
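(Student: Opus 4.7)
The plan is to prove the four statements $(S0)_n$--$(S3)_n$ simultaneously by induction on $n$, fixing the time $T=T(\|U_0\|_{s_0+2},\delta)$ once and for all small enough so that every estimate closes at every level.

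For the base case $n=1$, the equation $\cP_1$ is a constant coefficient Schr\"odinger equation whose semigroup is an isometry on every $\dot \bH^\sigma$; hence $\|U_1(t)\|_\sigma = \|U_0\|_\sigma$ and $\|\partial_t U_1\|_{\sigma-2} \lesssim \|U_0\|_\sigma$, and fixing $C_r\geq 1$ larger than the implicit constant above yields $(S0)_1$--$(S2)_1$, while $(S3)_1$ is immediate since $r/2=\|U_0\|_{s_0}$. Property $(S1)_1$ follows from the fundamental theorem of calculus: $\|\rho(U_1(t))-\rho(U_0)\|_{L^\infty}\lesssim \|U_1(t)-U_0\|_{s_0} \leq T\|\partial_t U_1\|_{L^\infty_T\dot \bH^{s_0}}\lesssim T\|U_0\|_{s_0+2}$, which is at most $\delta/2$ once $T$ is small with respect to $\|U_0\|_{s_0+2}/\delta$.

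For the inductive step, suppose $(S0)_n$--$(S3)_n$ hold. By $(S1)_n$, $(S2)_n$ the function $U_n$ satisfies the hypotheses \eqref{U.ass} of Proposition \ref{LinLWP} with the prescribed $r$ and $\Theta$, while \eqref{restoparalin0} gives $R(U_n)\in C^0([0,T],\dot \bH^\sigma)$ with $\|R(U_n)\|_{\dot \bH^\sigma}\leq \tC(\Theta)\|U_n\|_{\dot \bH^\sigma}$ for $\sigma\in\{s_0,s_0+2,s\}$. Proposition \ref{LinLWP} then produces the unique $U_{n+1}\in C^0([0,T],\dot \bH^s)\cap C^1([0,T],\dot \bH^{s-2})$, giving $(S0)_{n+1}$. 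Applying \eqref{duaest} at the three levels $\sigma\in\{s_0,s_0+2,s\}$, and recovering the time derivative from the equation together with \eqref{ba1}, yields
\begin{equation*}
\|U_{n+1}\|_{L^\infty_T\dot \bH^\sigma}\leq C_r e^{C_\Theta T}\|U_0\|_\sigma + T\,C_\Theta\, e^{C_\Theta T}\,\tC(\Theta)\,\|U_n\|_{L^\infty_T\dot \bH^\sigma},
\end{equation*}
and analogous bounds on $\|\partial_t U_{n+1}\|_{\dot \bH^{\sigma-2}}$. Using $(S2)_n$ to bound the $\|U_n\|_{L^\infty_T\dot \bH^\sigma}$ factor by $r$, $\Theta$ or $M$, one closes $(S2)_{n+1}$ by choosing $T$ so that $C_r e^{C_\Theta T}\leq 2C_r$ and $T\,C_\Theta\, e^{C_\Theta T}\,\tC(\Theta)\cdot 4C_r\leq 2C_r$. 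The control on $\|\partial_t U_{n+1}\|_{s_0}\leq C_r\Theta$ then gives $(S1)_{n+1}$ by the same fundamental-theorem-of-calculus argument as in the base case, shrinking $T$ if needed.

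For $(S3)_{n+1}$, set $W_{n+1}:=U_{n+1}-U_n$, which solves $W_{n+1}(0)=0$ and
\begin{equation*}
\partial_t W_{n+1}=\J\,\Opbw{A(U_n)}W_{n+1}+\J\,\Opbw{A(U_n)-A(U_{n-1})}U_n+R(U_n)-R(U_{n-1}).
\end{equation*}
Applying Proposition \ref{LinLWP} at $\sigma=s_0$ with $U=U_n$, zero initial datum and source given by the last two contributions, and bounding the source via \eqref{la1} and \eqref{restoparalin2} by $\tC(\Theta)\|U_n\|_{s_0+2}\|W_n\|_{s_0}+\tC(\Theta)\|W_n\|_{s_0}\leq \tC(\Theta)\|W_n\|_{s_0}$, yields $\|W_{n+1}\|_{L^\infty_T\dot \bH^{s_0}}\leq T\,C_\Theta\, e^{C_\Theta T}\,\tC(\Theta)\,\|W_n\|_{L^\infty_T\dot \bH^{s_0}}$. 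A final shrinking of $T$ to make this prefactor at most $1/2$, combined with the inductive hypothesis $\|W_n\|_{s_0}\leq 2^{-n}r$, gives $(S3)_{n+1}$. The main obstacle is that a single $T=T(\|U_0\|_{s_0+2},\delta)$ must be found that works at every level of the induction: this is secured by fixing $\Theta$, $M$ and $r$ in terms of $U_0$ at the outset, so that the constants $C_\Theta$, $\tC(\Theta)$ that appear on every level depend only on $\|U_0\|_{s_0+2}$, and then $T$ is determined by the finitely many smallness conditions collected above.
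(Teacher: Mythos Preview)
Your overall scheme matches the paper's induction, and the proofs of $(S0)_{n+1}$, $(S1)_{n+1}$, $(S3)_{n+1}$ and the $\dot\bH^{s_0+2}$, $\dot\bH^{s}$ parts of $(S2)_{n+1}$ are correct. However, your closing of the $\dot\bH^{s_0}$ bound \eqref{S2n0} is wrong. Applying \eqref{duaest} at $\sigma=s_0$ gives
\[
\|U_{n+1}\|_{L^\infty_T\dot\bH^{s_0}}\leq C_r e^{C_\Theta T}\|U_0\|_{s_0}+T\,C_\Theta e^{C_\Theta T}\tC(\Theta)\,r,
\]
and your smallness conditions $C_r e^{C_\Theta T}\leq 2C_r$, $T\,C_\Theta e^{C_\Theta T}\tC(\Theta)\leq 1/2$ yield at best $C_r\cdot(r/2)+r/2$, which exceeds $r$ as soon as $C_r>1$. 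The point is that, unlike $\Theta=4C_r\|U_0\|_{s_0+2}$ and $M=4C_r\|U_0\|_s$, the target $r=2\|U_0\|_{s_0}$ does \emph{not} carry the extra factor $C_r$ needed to absorb the constant $C_r$ appearing in front of the initial datum in \eqref{duaest}; no choice of small $T$ repairs this, since $C_r e^{C_\Theta T}\geq C_r$.

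The paper does not attempt to close \eqref{S2n0} through \eqref{duaest} at all. Instead it first proves $(S3)_n$ and then obtains \eqref{S2n0} by telescoping:
\[
\|U_n\|_{L^\infty_T\dot\bH^{s_0}}\leq \|U_1\|_{L^\infty_T\dot\bH^{s_0}}+\sum_{m=2}^n\|U_m-U_{m-1}\|_{L^\infty_T\dot\bH^{s_0}}\leq \frac{r}{2}+\sum_{m\geq 2}2^{-m}r=r.
\]
Since your proof of $(S3)_{n+1}$ uses only inductive information at levels $n$ and $n-1$, there is no circularity: simply reorder your argument so that \eqref{S2n0} at step $n+1$ is deduced from $(S3)_{n+1}$ rather than from \eqref{duaest}. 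A minor related point: when you invoke Proposition~\ref{LinLWP}, note that \eqref{U.ass} asks for $\|U_n\|_{s_0+2}+\|\partial_t U_n\|_{s_0}\leq \Theta$, whereas $(S2)_n$ gives this sum $\leq (1+C_r)\Theta$; this only shifts $C_\Theta$ and does not affect the argument, but it is worth stating correctly.
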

\begin{proof}

We prove the statement by induction on $n\in \N$.
Given $r >0$,  we define  
$$
C_r := \max  \left\{ 1, C_{r, s_0}, \ C_{r, s_0+2}, \ C_{r,s}, \ 2 \, \tC(r) \right\} , 
$$
where $C_{r, \sigma}$ is the constant in Proposition \ref{LinLWP} (where we stress that it depends also on $\s$)  and $\tC(\cdot)$ is the function in \eqref{ba1} and \eqref{restoparalin0}. In the following  we shall denote by $ C_\Theta $ all the constants depending on $\Theta$, which can vary from line to line. \\
\noindent{\bf Proof of $(S0)_1$:} The problem  
$\cP_1$ admits a unique global solution which 
preserves Sobolev norms.
\\[1mm]
\noindent{\bf Proof of $(S1)_1$:}  
We have $\rho(U_0) \in \cQ_{\delta}$. In addition 
$$ 
 \| \rho(U_1(t) - U_0) \|_{L^\infty(\T^d)}
\lesssim  \| U_1(t)- U_0\|_{{s_0}}\lesssim T \|  U_0\|_{s_0+2} \leq \delta/2 
$$
for {$T := T(\|  U_0\|_{s_0+2}, \delta)>0$} sufficiently small, which implies $\rho(U_1(t)) \in \cQ_{\frac{\delta}{2}}$, for any $ t \in [0, T]$.

\noindent{\bf Proof of $(S2)_1$ and $(S3)_1$:} 
{The flow of $\cP_1$ 
is an isometry} and $M\geq \| U_0\|_{ s}$, $ \Theta \geq \|  U_0\|_{{s_0+2}}$.

Suppose that $(S0)_{n-1}$--$(S3)_{n-1}$ hold true. We prove $(S0)_{n}$--$(S3)_{n}$.

\noindent{\bf Proof of $(S0)_n$:} 
We apply Proposition \ref{LinLWP} with $\sigma = s $, 
$ U \leadsto U_{n-1} $ and
$R(t) := R(U_{n-1}(t))$.
By $(S1)_{n-1}$ and  $(S2)_{n-1}$, 
the function $U_{n-1}$  satisfies assumption 
 \eqref{U.ass} with $\Theta \leadsto (1+C_r)\Theta$. In addition
$ R(U_{n-1}(t))$ belongs to $C^0([0, T], \dot \bH^{s})$ thanks to  
\eqref{restoparalin1} and  $ U_{n-1} \in C^0
([0,T]; \dot \bH^s)$.  
Thus Proposition \ref{LinLWP} with $\sigma = s$ implies  $(S0)_n $.
  In particular $ U_n $ satisfies the estimate \eqref{duaest}.\\
\noindent{\bf Proof of $(S2)_n$:}   
We first prove \eqref{S2n}. The  estimate  \eqref{duaest} with $\s = s_0+2$, 
the bound \eqref{restoparalin0} of $R(U_{n-1}(t))$ 
 and \eqref{S2n} at the  step $  n - 1 $, imply
\begin{equation}\label{Unup} 
\| U_n\|_{L^\infty_T \dot \bH^{s_0+2}}\leq C_{r} e^{C_{\Theta}T} \|  U_0\|_{{s_0+2}} + T C_{\Theta}  e^{C_\Theta T}  \Theta \, .
\end{equation}
As $\Theta= 4C_r \|U_0\|_{{s_0+2}} $, we take  $T>0$  small such that 
\begin{equation}\label{tione}
C_{\Theta}T\leq 1 \, , \quad 
T C_{\Theta} e^{C_\Theta T}  \leq 1 / 4  \, , 
\end{equation}
which, by \eqref{Unup}, gives 
$\| U_n\|_{L^\infty_T\dot \bH^{s_0+2}}\leq \Theta $. 
This proves the first estimate of \eqref{S2n}. Regarding the control of $\partial_t U_n$, we use the equation $\mathcal{P}_n$,
the second estimate in \eqref{restoparalin0}  and \eqref{ba1}  with $\s=s_0$
to obtain 
\begin{equation}\label{megliodiprima}
\| \partial_t U_n(t)\|_{{s_0}}\leq \tC\left(\| U_{n-1}(t)\|_{{s_0}}\right) \| U_n(t)\|_{{s_0+2}} 
+  \tC\left(\| U_{n-1}(t)\|_{{s_0}}\right) \| U_{n-1}(t)\|_{{s_0+2}}
\leq C_r\Theta \end{equation}
which proves  the second estimate of \eqref{S2n}.
\\
Next we prove \eqref{S2n1}. 
Applying  estimate \eqref{duaest}  with $\s = s$,  we have 
\begin{equation*} 
\| U_n\|_{L^\infty_T \dot \bH^{s}}\leq C_{r} e^{C_{\Theta}T} \|  U_0\|_{s} + T C_{\Theta}  e^{C_\Theta T}  M \leq M 
\end{equation*}
for  $ M= 4C_r \| U_0\|_{s} $ and since $T>0$ is chosen as in \eqref{tione}. The estimate for $\norm{\pa_t U_n}_{s-2}$ is similar to \eqref{megliodiprima}, and we omit it.
Estimate \eqref{S2n0} is a consequence of $(S3)_n$, which we prove below. 
\\
\noindent{\bf Proof of $(S1)_n$:} We use   estimate \eqref{megliodiprima} to get 
\begin{align*}
\| \rho(U_n(t) - U_0) \|_{L^\infty(\T^d)}  \leq C  \| U_n(t)-  U_0\|_{{s_0}}&\leq C \int_0^T \| \partial_t U_n(t)\|_{{s_0}}\, \di t  \leq C\, C_r\, T\, \Theta\leq \delta / 2 
 \end{align*}
provided that $T< \delta/{(2 C C_r \Theta)}$. This shows that $\rho(U_n(t)) \in \cQ_{\frac{\delta}{2}}$.

\noindent{\bf Proof of $(S3)_n$:}  Define $V_n:= U_n-U_{n-1}$ {if $ n \geq 2 $ 
and $ V_1 := U_1 $}. 
 Note that $ V_n$,  $ n \geq 2 $, solves 
\begin{equation}\label{Vneq} 
\partial_t V_n= \J \Opbw{A(U_{n-1})} V_n + f_n \, , \quad V_n(0)=0 \, , 
\end{equation}
where $A:= A_2+A_1$ and 
$$ 
\begin{aligned}
f_n & := \J \Opbw{A(U_{n-1})- A(U_{n-2})} U_{n-1}+ R(U_{n-1}) - R(U_{n-2}) \, ,    \
{\rm for}  \ n > 2 \, , \\
f_2 & := 
\J {\rm Op}^{BW}\big({A(U_{1})-  \sqrt{\eq K(\eq)}|\xi|^2} \big) U_{1}+ R(U_{1})  \, .
\end{aligned} 
$$  
 Applying estimates \eqref{restoparalin2}, \eqref{la1}, \eqref{restoparalin0} and \eqref{S2n}
 we obtain, for $ n \geq 2 $,  
  \begin{equation}\label{fns0}
   \| f_n \|_{{s_0}}  \leq C_\Theta    \| V_{n-1}\|_{s_0} \, , \quad \forall t \in [0,T] \, . 
   \end{equation}
  We apply Proposition \ref{LinLWP} to \eqref{Vneq} 
with $\s=s_0$. Thus by \eqref{duaest} and \eqref{fns0} we get
 $$ 
 \begin{aligned}
 \| V_n\|_{L^\infty_T \dot \bH^{s_0}} 
& \leq 
 C_\Theta e^{C_\Theta T } T 
 \| f_n\|_{L^\infty_T \dot \bH^{s_0}}
\leq  
 C_\Theta e^{C_\Theta T } T   \| V_{n-1}\|_{L^\infty_T \dot \bH^{s_0}} \leq \frac{1}{2} \| V_{n-1}\|_{L^\infty_T \dot \bH^{s_0}} 
 \end{aligned}
 $$
provided  $ C_\Theta e^{C_\Theta T} T\leq \frac{1}{2}$.
The proof of Lemma \ref{ite} is complete. 
  \end{proof}
  
  \begin{corollary} \label{corobello}
  With the same assumptions of Lemma \ref{ite}, for any $s_0+2 \leq  s'<s$:
  \begin{itemize}
  \item[(i)]
  $(U_n)_{n \geq 1}$ is a Cauchy sequence in  $C^0([0,T],\dot \bH^{s'})\cap C^1([0,T],\dot \bH^{s'-2})$ with $ T = T(\| U_0\|_{{s_0+2}},\delta) $  given by 
  Lemma \ref{ite}. It converges
  to the unique solution $U(t) $  of  \eqref{EKcf1} with initial datum $U_0$, 
$ U(t) $ is in  $  C^0([0,T],\dot \bH^{s'})\cap C^1([0,T],\dot \bH^{s'-2}) $.
  Moreover $\rho(U(t)) \in \cQ$, $ \forall t \in [0, T]$.
  \item[(ii)] For any $t\in [0,T]$, $U(t) \in \dot \bH^s $ and $\norm{U(t)}_s \leq 4 C_r \norm{U_0}_s$ where $C_r$ is the constant of $(S2)_n$.
  \end{itemize}
    \end{corollary}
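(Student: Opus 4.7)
My plan is to upgrade the $\dot \bH^{s_0}$--Cauchy estimate from $(S3)_n$ to a $\dot \bH^{s'}$--Cauchy estimate by interpolating with the uniform $\dot \bH^s$--bound from $(S2)_n$, and then pass to the limit in $\mathcal{P}_n$. Setting $V_n := U_n - U_{n-1}$ and writing $s' = \theta s_0 + (1-\theta) s$ with $\theta \in (0,1]$, the interpolation inequality \eqref{interp4} combined with $(S3)_n$ and $(S2)_n$ yields
\[
\|V_n\|_{L^\infty_T \dot \bH^{s'}} \leq \|V_n\|_{L^\infty_T \dot \bH^{s_0}}^{\theta} \, \|V_n\|_{L^\infty_T \dot \bH^{s}}^{1-\theta} \leq (2^{-n} r)^{\theta} (2M)^{1-\theta},
\]
which is geometrically summable in $n$. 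Hence $(U_n)$ is Cauchy in $C^0([0,T], \dot \bH^{s'})$ and converges to a limit $U$.

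To upgrade to $C^1([0,T], \dot \bH^{s'-2})$, I would subtract $\mathcal{P}_{n-1}$ from $\mathcal{P}_n$, obtaining
\[
\partial_t V_n = \J \Opbw{A(U_{n-1})} V_n + \J \Opbw{A(U_{n-1}) - A(U_{n-2})} U_{n-1} + R(U_{n-1}) - R(U_{n-2}),
\]
and estimate each summand in $\dot \bH^{s'-2}$: the first by \eqref{ba1} at $\sigma = s'-2$, the second by the Lipschitz bound \eqref{la1} combined with the uniform $\dot \bH^s$--bound on $U_{n-1}$, and the third by \eqref{restoparalin1}. Each piece is majorized by a summable sequence once one controls $\|V_{n-1}\|_{s'-2}$ and $\|V_{n-1}\|_{s_0+2}$ by interpolation between $\|V_{n-1}\|_{s_0}$ (geometrically decaying) and the uniform $\dot \bH^s$--bound. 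Consequently $(\partial_t U_n)$ is Cauchy in $C^0([0,T], \dot \bH^{s'-2})$, and passing to the limit in $\mathcal{P}_n$ using \eqref{ba1}, \eqref{la1} and \eqref{restoparalin1} shows that $U$ belongs to $C^0([0,T], \dot \bH^{s'})\cap C^1([0,T], \dot \bH^{s'-2})$ and solves \eqref{EKcf1} with $U(0) = U_0$.

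The persistence $\rho(U(t)) \in \cQ$ follows from the Sobolev embedding $\dot \bH^{s'} \hookrightarrow L^\infty(\T^d)$ (valid since $s' > d/2$): for each $t$, $\rho(U_n(t)) \to \rho(U(t))$ uniformly in $x$, while $(S1)_n$ places each $\rho(U_n(t))$ in the closed set $\cQ_{\delta/2} \subset \cQ$, so the uniform limit lies in $\cQ$. For uniqueness, given two solutions $U, \widetilde U$ in the claimed regularity class, the difference $W := U - \widetilde U$ solves a linear inhomogeneous system of the form \eqref{linpro} with reference function $U$ and source $f = \J \Opbw{A(U) - A(\widetilde U)} \widetilde U + R(U) - R(\widetilde U)$; the bounds \eqref{la1} and \eqref{restoparalin2} give $\|f(t)\|_{s_0} \lesssim \|W(t)\|_{s_0}$, so Proposition \ref{LinLWP} at $\sigma = s_0$ combined with Gronwall forces $W \equiv 0$.

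Part $(ii)$ I would obtain via a weak-compactness/Fatou argument: by $(S2)_n$, the sequence $(U_n(t))$ is uniformly bounded in $\dot \bH^s$ by $M = 4 C_r \|U_0\|_s$; at each fixed $t$, this bound combined with the strong $\dot \bH^{s'}$--convergence proved above yields weak $\dot \bH^s$--convergence to $U(t)$ along a subsequence, and weak lower semicontinuity of $\|\cdot\|_s$ gives $\|U(t)\|_s \leq 4 C_r \|U_0\|_s$. I expect the main technical obstacle to be the $C^1$ Cauchy step: one must carefully track which norm of $V_{n-1}$ or $U_{n-1}$ absorbs each Lipschitz factor in \eqref{la1} and \eqref{restoparalin1}, and interpolate twice so that every resulting bound is geometrically summable rather than merely uniformly bounded. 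All remaining steps are direct assemblies of Lemma \ref{ite}, Proposition \ref{LinLWP} and the Bony--Weyl calculus.
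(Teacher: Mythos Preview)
Your proposal is correct and follows essentially the same route as the paper. The paper interpolates $(S3)_n$ against $(S2)_n$ exactly as you do, dispatches the $C^1$ Cauchy step in one line (``Similarly using that $\partial_t U_n$ solves $\cP_n$\ldots''), passes to the limit in $\cP_n$ via \eqref{la1}, \eqref{restoparalin1}, proves uniqueness by applying Proposition~\ref{LinLWP} at $\sigma=s_0$ with the smallness of $t$ (rather than Gronwall, but the mechanism is the same), and obtains (ii) by the same weak lower semicontinuity you describe. Your concern about the $C^1$ step is slightly overcautious: once $(U_n)$ is Cauchy in $C^0([0,T],\dot\bH^{s'})$ with $s'\geq s_0+2$, each of the three terms you list is controlled directly by \eqref{ba1}, \eqref{la1}, \eqref{restoparalin1} without any delicate bookkeeping.
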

  \begin{proof}
$(i)$ If $s' = s_0$ it is the content of $(S3)_n$. For $ s'  \in (s_0,  s)$, we use interpolation estimate \eqref{interp4},  \eqref{S2n1} and $(S3)_n$ to get, for $ n \geq 2 $, 
  $$ 
  \| U_n-U_{n-1}\|_{L^\infty_T \dot \bH^{s'}} \leq 
  \| U_n-U_{n-1}\|_{L^\infty_T \dot \bH^{s_0}}^{\theta}
  \| U_n-U_{n-1}\|_{L^\infty_T \dot\bH^{s}}^{1-\theta} \leq 2^{-n\theta} C_M \, ,
  $$
where $\theta \in (0,1) $ is chosen so that  $s' = \theta s_0 + (1-\theta)s$.  Thus  $(U_n)_{n \geq 1}$ is a Cauchy sequence in $C^0([0,T], \dot \bH^{s'})$;
 we denote by  $U(t)\in C^0([0,T],\dot \bH^{s'})$ its  limit.
  Similarly using that $ \partial_t U_n $ solves $ \cP_n $, one proves that 
  $\partial_tU_n$ is a Cauchy sequence in $C^0([0,T],\dot \bH^{s'-2})$ that converges to $\partial_t U$ in $C^0([0,T],\dot \bH^{s'-2})$.  
  In order to prove that $ U ( t ) $ solves \eqref{EKcf1},  it is enough to show that 
  \begin{align*}
 & \cR(U, U_{n-1}, U_n) := \J\Opbw{A(U_{n-1})}U_n -  \J\Opbw{A(U)}U
 + R(U_{n-1}) - R(U) 
  \end{align*}
 converges to $ 0 $ in $ L^\infty_T \dot \bH^{s'-2} $. 
This holds true because by estimates \eqref{cont00},  \eqref{la1}, \eqref{restoparalin1}, 
 $ (S2)_n $,  and the fact that $U(t) \in C^0([0,T],\dot \bH^{s'})$,   we have 
 $$
  \|  \cR(U, U_{n-1}, U_n) \|_{L^\infty_T \dot \bH^{s'-2}}\leq C_M \left( \| U-U_{n-1}\|_{L^\infty_T \dot \bH^{s'}}+ \norm{U-U_{n-1}}_{L^\infty_T \dot \bH^{s_0+2}} + \| U-U_n\|_{L^\infty_T \dot \bH^{s'}} \right)
  $$
which converges to $0$ as $n \to \infty$.

Let us now prove the uniqueness.
 Suppose that $V_1,V_2\in C^0([0,T],\dot \bH^{s'})\cap C^1([0,T],\dot \bH^{s'-2})$ are solutions of \eqref{EKcf1} with initial datum $U_0$. 
 Then $W:= V_1-V_2$ solves 
 \begin{align*}
 \partial_tW  = \J \Opbw{A(V_1)}W + \bold{R}(t) \, ,  \quad W(0)=0 \, ,
 \end{align*}
 where
 $
 \bold R(t):=\J\Opbw{A(V_1)-A(V_2)}V_2+ R(V_1)-R(V_2) .
 $ Applying Proposition \ref{LinLWP} with $\sigma = s_0$ and $\Theta, r$ defined by 
  $$
  \Theta := \max_{j=1,2}\big( \| V_j\|_{L^\infty_T \dot \bH^{s_0+2}} +
  \|\partial_t V_j\|_{L^\infty_T \dot \bH^{s_0}}
  \big) \, ,  \quad r := \max_{j = 1,2} 
   \| V_j\|_{L^\infty_T \dot \bH^{s_0}} \, , 
 $$
 together with estimates \eqref{restoparalin2}  and \eqref{la1}   we have, 
 for any $ t \in [0,T] $, 
 \begin{align*}
  \| W\|_{{L^\infty_t \dot \bH^{s_0}}} & \leq  C_\Theta e^{C_\Theta t }t \| \bold R\|_{L^\infty_t \dot \bH^{s_0}} 
  \leq  C_\Theta e^{C_\Theta t }t  \| W\|_{L^\infty_t \dot \bH^{s_0}} \, .
 \end{align*}
Therefore, provided $t$ is so small that  $C_\Theta e^{C_\Theta t }t< 1$, we get  $V_1(\tau)=V_2(\tau)$ $\, \forall \tau \in [0,t]$. As \eqref{EKcf1} is autonomous, actually one has   $V_1(t)=V_2(t)$ for all $t\in [0,T]$. This proves the uniqueness.\\
 Finally, as $\rho(U_n(t)) \in \cQ_{\frac{\delta}{2}}$ and  $U_n(t) \to U(t)$ in $\dot \bH^{s_0}$, then $\rho(U(t)) \in  \cQ_{\frac{\delta}{2}} \subset \cQ$.
 
 $(ii)$   
Since   $ \| U_n(t)\|_s \leq  4 C_r \norm{U_0}_s $ and 
  $U_n(t) \rightarrow U(t)$ 
   in $\dot \bH^{s'}$ 
   then 
  $\| U (t)\|_s \leq  4 C_r \norm{U_0}_s  $.
  \end{proof}

\noindent
Let
  $\displaystyle{
  \Pi_{N} U := \Big({\sum_{1\leq |j|\leq N} {u}_j e^{\im j\cdot  x}},{ \sum_{1\leq |j|\leq N} \bar{{ u}_{j}} e^{-\im j \cdot x}}\Big)}
   $.
    We need below the following technical lemma.
\begin{lemma}
\label{lem:UNtoU}
Let  $ U_0\in \dot \bH^s$, $ s > 2 + \frac{d}{2} $, with $\rho(U_0) \in \cQ_\delta$ for some $\delta >0$. 
Then there exists a time $\tilde T:= \tilde T( \| U_0\|_{s_0+2}, \delta)>0$ and $N_0 >0$ such that for any $N > N_0$:
\begin{itemize}
\item[(i)] system \eqref{EKcf1} with initial datum $ 
\Pi_{N}  U_0$ has a unique solution $U_N \in C^0([0, \tilde T], \dot \bH^{s+2})$.
\item[(ii)] Let 
$U$ be the unique solution of  \eqref{EKcf1} with initial datum $U_{0}$ defined in the 
time interval $
[0,T] $ (which exists by Corollary \ref{corobello}). Then there is
$ \breve T < \min\{T, \tilde T \} $, depending on $ \| U_0 \|_s $, independent of $ N $,  such that
\begin{equation}
\label{over0}
\norm{U - U_N}_{ L^\infty_{\breve T} \dot \bH^s} \leq \tC (\norm{U_0}_s) \,  \left( \norm{U_0 -  \Pi_{N} U_{0}}_s + N^{s_0 +2 -s} \right).
\end{equation}
In particular $U_N \to U$ in $C^0([0, \breve T], \dot \bH^{s})$ when $N \to \infty$.
\end{itemize}
\end{lemma}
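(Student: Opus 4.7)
The plan is to prove (i) by applying Corollary \ref{corobello} to the smoothed initial datum $\Pi_N U_0$, and to prove (ii) by deriving a linear inhomogeneous equation for $W_N := U - U_N$ and invoking Proposition \ref{LinLWP} at two regularity levels. For (i), the truncated datum $\Pi_N U_0$ is a trigonometric polynomial, hence belongs to $\dot \bH^{s+2}$ for every $N$; moreover $\|\Pi_N U_0\|_{s_0+2} \leq \|U_0\|_{s_0+2}$ and $\Pi_N U_0 \to U_0$ in $\dot \bH^{s_0} \hookrightarrow L^\infty$ (recall $s_0 > d/2$), so for $N$ larger than some $N_0 = N_0(U_0, \delta)$ we have $\rho(\Pi_N U_0) \in \cQ_{\delta/2}$. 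Applying Corollary \ref{corobello}(i) with regularity $s' = s+2$ yields a unique $U_N \in C^0([0, \tilde T], \dot \bH^{s+2})$ with $\tilde T = \tilde T(\|U_0\|_{s_0+2}, \delta)$ uniform in $N$, and Corollary \ref{corobello}(ii) at that regularity provides the key a priori bound
\begin{equation*}
\|U_N(t)\|_{s+2} \leq 4 C_r \|\Pi_N U_0\|_{s+2} \leq C N^2 \|U_0\|_s,
\end{equation*}
the polynomial growth in $N$ that will be balanced below.

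For (ii), setting $W_N := U - U_N$ one has
\begin{equation*}
\pa_t W_N = \J \Opbw{A(U(t))} W_N + \mathcal R(t), \qquad W_N(0) = U_0 - \Pi_N U_0,
\end{equation*}
with $\mathcal R(t) := \J \Opbw{A(U) - A(U_N)} U_N + R(U) - R(U_N)$. Since $U$ satisfies \eqref{U.ass} with constants depending only on $\|U_0\|_{s_0+2}$ and $\delta$, I apply Proposition \ref{LinLWP} first at $\sigma = s_0$: by \eqref{la1} and \eqref{restoparalin2}, $\|\mathcal R(t)\|_{s_0} \leq C \|W_N(t)\|_{s_0}$, so absorption of the remainder term on a sufficiently small, $N$-independent time interval gives
\begin{equation*}
\|W_N\|_{L^\infty_t \dot \bH^{s_0}} \leq C \|U_0 - \Pi_N U_0\|_{s_0} \leq C N^{s_0-s} \|U_0\|_s.
\end{equation*}

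Then at $\sigma = s$, using \eqref{la1} together with the bound from (i),
\begin{equation*}
\|\J \Opbw{A(U) - A(U_N)} U_N\|_s \leq \tC \|U_N\|_{s+2} \|W_N\|_{s_0} \leq C\, N^2 \cdot N^{s_0-s} = C N^{s_0+2-s},
\end{equation*}
while \eqref{restoparalin1} yields $\|R(U) - R(U_N)\|_s \leq \tC (\|W_N\|_s + \|W_N\|_{s_0+2})$. The intermediate-regularity term is controlled by interpolation \eqref{interp4} with $\theta = (s-s_0-2)/(s-s_0)$,
\begin{equation*}
\|W_N\|_{s_0+2} \leq \|W_N\|_{s_0}^{\theta} \|W_N\|_s^{1-\theta} \leq C (N^{s_0-s})^{\theta}\, M^{1-\theta} = C N^{s_0+2-s},
\end{equation*}
using the uniform bound $\|W_N\|_s \leq 2M$ that follows from Corollary \ref{corobello}(ii) applied to both $U$ and $U_N$. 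Combining, $\|\mathcal R(t)\|_s \leq C \|W_N(t)\|_s + C N^{s_0+2-s}$, and Proposition \ref{LinLWP} with absorption on a sufficiently small time $\breve T$ (which now depends on $\|U_0\|_s$, but is still independent of $N$) delivers \eqref{over0}; the last claim of the lemma then follows from $\|U_0 - \Pi_N U_0\|_s \to 0$ as $N \to \infty$.

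The main obstacle is to reconcile the apparently harmful growth $\|U_N\|_{s+2} \sim N^2$, forced by the fact that the initial datum has only $s$ square-integrable derivatives, with the requirement of an estimate that tends to zero as $N \to \infty$. The resolution, in the spirit of Bona--Smith-type arguments for continuous dependence in quasilinear problems, is to pair this high-regularity growth with the low-regularity decay $\|W_N\|_{s_0} \lesssim N^{s_0-s}$: the strict gap $s > s_0 + 2$ enforced in \eqref{s0} is exactly what makes the product $N^2 \cdot N^{s_0-s} = N^{s_0+2-s}$ tend to zero, and the same balancing through interpolation controls the intermediate-norm term appearing in the tame estimate \eqref{restoparalin1}.
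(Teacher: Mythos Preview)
Your proof is correct and follows essentially the same Bona--Smith argument as the paper: apply Corollary \ref{corobello} to the truncated datum at regularity $s+2$, derive the linear equation for $W_N = U - U_N$, and invoke Proposition \ref{LinLWP} at levels $\sigma = s_0$ and $\sigma = s$ to balance the $N^2$ growth of $\|U_N\|_{s+2}$ against the $N^{s_0-s}$ decay of $\|W_N\|_{s_0}$. The only minor deviation is your interpolation of $\|W_N\|_{s_0+2}$ between $\|W_N\|_{s_0}$ and $\|W_N\|_s$; the paper more directly absorbs this term into $\|W_N\|_s$ (using $s_0+2 < s$), but both routes are valid and lead to the same estimate \eqref{over0}.
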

\begin{proof}
Clearly $ 
\Pi_{N}  U_0  \in \dot C^\infty$. Moreover,  as
$ \| \rho(U_0 - \Pi_{N}  U_0) \|_{L^\infty(\T^d)} \to 0 $ when  $N \to \infty$,  one has  
$\rho(\Pi_{N}  U_0) \in \cQ_{\frac{\delta}{2}}$ provided 
  $N \geq N_0 $ is sufficiently large. 
So we can apply  Corollary \ref{corobello} and obtain 
a time $\tilde T > 0 $, independent on $N$, and  a unique solution   $U_N \in C^0([0, \tilde T], \dot \bH^{s+2}) $ of \eqref{EKcf1} with initial datum $ \Pi_{N}  U_0 $. Moreover, by item $(ii)$ of that corollary, setting $ r = 2 \norm{\Pi_{N}  U_0}_{s_0} $,
  \begin{align}
  \label{over00}
&    \| U_N\|_{{L^\infty_{\tilde T}} \dot  \bH^{s}}\leq 4
{C_r   \| \Pi_{N}  U_0 \|_{{s}}} \leq \tC(\norm{U_{0}}_{s_0})  \, \| U_0\|_{{s}} \, ,
\\
\label{over}
 &  \| U_N\|_{{L^\infty_{\tilde T}} \dot  \bH^{s+2}}
 \leq 4 {C_r \| \Pi_{N}  U_0 \|_{{s+2}}}
\leq 
\tC(\norm{U_{0}}_{s_0}) \, N^2 \, \| U_0\|_{{s}} \, .
   \end{align}
  This proves item $(i)$. 
  In the following let  $ \Tb \leq \min \{ \tilde T, T \} $. \\
Let us prove $(ii)$.
Let  $ \Theta :=  \norm{U}_{L^\infty_\Tb \dot \bH^{s_0+2} } +
\norm{\pa_t U}_{L^\infty_\Tb \dot \bH^{s_0}  }$ 
 and $ r := \| U\|_{L^\infty_\Tb \dot \bH^{s_0}} $.
The function $W_N(t):=U(t)-U_N(t)$ satisfies
$
\norm{W_N(t)}_s \leq 
\norm{U(t)}_s + \norm{ U_N(t)}_s 
\leq  \tC(\norm{U_{0}}_{s}) 
$, 
$\, \forall t \in [0,  \Tb]$, {by Corollary \ref{corobello}-($ii$)}.
Moreover,  $W_N$ solves 
 \begin{align*}
 \partial_t W_N & = \J \Opbw{A(U)} W_N  
 +\bold R(t) \, ,  \quad W_N(0) = U_0- \Pi_{N}  U_0 
 \end{align*}
 where
 $
 \bold R(t):= \J \Opbw{A(U)-A(U_N)} U_N + R(U)-R(U_N).
 $
Applying Proposition \ref{LinLWP} {with $ \sigma = s_0 $} and  
  estimates  \eqref{la1}, \eqref{restoparalin2}, \eqref{over00}
one obtains 
 \begin{align*}
 \| W_N\|_{L^\infty_{ \Tb} \dot \bH^{s_0 }}
 &\leq  C_r e^{C_\Theta \Tb} \| U_0 - \Pi_{N}  U_0 \|_{{s_0}}
 +
 \Tb  C_\Theta e^{C_\Theta \Tb}\, \tC(\| {U_0} \|_{s_0+2}) \norm{W_N}_{L^\infty_\Tb \dot \bH^{s_0}},
 \end{align*}
which,  provided  $\Tb $  is so small that $ \Tb  C_\Theta e^{C_\Theta \Tb}\, \tC(\| U_0\|_{s_0+2}) \leq \frac12$ (eventually shrinking it),  gives 
 \begin{equation}
  \label{over1}
 \| W_N\|_{{L^\infty_\Tb \dot \bH^{s_0}}} \leq  C_r \| U_0- \Pi_{N}  U_0 \|_{{s_0}} \leq  C_r \,  N^{s_0 - s} \norm{U_0}_s \, . 
  \end{equation} 
Similarly one  estimates $\norm{W_N(t)}_{\dot \bH^s}$,  getting 
 \begin{align*}
&  \| W_N\|_{{L^\infty_\Tb \dot \bH^s }} \\
 &\leq  C_r e^{C_\Theta \Tb}\| U_0- \Pi_{N}  U_0 \|_s
 +
  C_\Theta
   e^{C_\Theta \Tb} \Tb  \tC\big( \| U_0\|_{s} \big) \, 
   \left(\| W_N\|_{L^\infty_\Tb \dot \bH^{s_0}} \| {U_N}\|_{L^\infty_\Tb \dot \bH^{s+2}} + 
 \| W_N\|_{L^\infty_\Tb \dot \bH^s}\right)\\
& \stackrel{ \eqref{over}, \eqref{over1}}{\leq}
  C_r e^{C_\Theta \Tb} \| U_0- \Pi_{N}  U_0\|_{{s}}
  +
   C_\Theta
   e^{C_\Theta \Tb} \Tb  \tC(\| U_0\|_{s})   \big( N^{s_0-s +2}  +  \| W_N\|_{L^\infty_\Tb \dot \bH^s}\big)
 \end{align*}
from which  \eqref{over0} follows provided $ \Tb $ 
(depending on $\| U_0\|_{s}$) is sufficiently small.
\end{proof}

  \begin{proof}[{\bf Proof of Proposition \ref{prop:LWP}:}] Given an initial datum $ U_0\in \dot \bH^s$ with $\rho(U_0) \in \cQ$, choose $\delta >0$ so small that  
  $ \rho(U_0) \in \cQ_\delta$. Then 
Corollary   \ref{corobello} gives us a time $T = T(\norm{U_0}_{s_0+2}, \delta ) > 0 $ and a unique solution   $U \in C^0([0,T],\dot \bH^{s'})\cap C^1([0,T],\dot \bH^{s'-2})$, $\forall s_0+2 \leq s' < s$,  of \eqref{EKcf1}  with initial datum $U_0$.
Now take an  open neighborhood $\cU\subset \dot \bH^{s}$ of $U_0$ such that 
$\forall V \in \cU$ one has $\rho(V) \in \cQ_{\frac{\delta}{2}} $ and $\norm{V}_s \leq 2 \norm{U_0}_s$. Then  there exists $\tilde T \in (0, T] $ such that 
 the flow map  of \eqref{EKcf1}, 
 $$ 
 \Omega^t:   \cU \to \dot \bH^s \cap \big\{ 
 U \in \dot \bH^s:  \, \rho(U)\in \cQ_{\frac{\delta}{4}} \big\}  \, , \quad
 U_0 \mapsto \Omega^t(U_0):= U(t) \, , 
 $$
 is well defined for any   $t \in [0, \tilde T]$, it satisfies the group property
  \begin{equation}\label{auto}
 \Omega^{t+\tau}=\Omega^t\circ \Omega^\tau \, , \quad \forall t,\tau, t+\tau \in [0,\tilde T] \, , 
 \end{equation}
  and 
$  \norm{\Omega^t(U_0)}_s \leq  \tC(\norm{U_0}_s) $ for all $ U_0 \in \cU $, 
$  t\in [0, \tilde T] $. 
For simplicity of notation in the sequel we denote by $ T $ a time, independent of $ N $,
smaller than $ \tilde T $.

\noindent{\bf Continuity of $t \mapsto U(t)$:} We show that  $ U	\in C^0([0,T], \dot \bH^s )$.
By \eqref{auto}, it is enough to prove that $t\mapsto  U(t)$ is continuous in a neighborhood of $t=0$.
This follows by Lemma \ref{lem:UNtoU}, as $U$ is the uniform limit of continuous functions.\\
\noindent {\bf Continuity of the flow map:} 
We shall follow the method by \cite{BS,BDD}.
Let $ U^n_0 \to   U_0\in \dot \bH^s$ and pick $\delta >0$ such that  $\rho(U^n_0)$, $\rho(U_0) $,
 $\rho(\Pi_{N} U^n_0)$, $\rho(\Pi_{N} U_0) \in \cQ_{\delta}$,
for any $  n \geq n_0 $, 
$ N \geq N_0  $ sufficiently large. 
 Denote by $U^n,U \in C^0([0,T], \dot \bH^s)$ the  solutions of
 \eqref{EKcf1} with initial data $ U^n_0 $, respectively $ U_0 $, and
$ U_N(t):= \Omega^t(\Pi_{ N} U_0)$, $   U^n_N(t):= \Omega^t(\Pi_{ N} U^n_0)$. 
Note that these solutions  are well defined in $\dot \bH^s$ up to a common time $T' \in (0,T]$,
depending on $ \| U_0 \|_s $,  thanks to  Lemma \ref{lem:UNtoU}.
By triangular inequality we have,  by \eqref{over0},  for any $ n \geq n_0 $, $ N \geq N_0 $,  
\begin{align}
& \| U^n-U\|_{L^\infty_T \dot \bH^s} \leq  \| U^n- U^n_N\|_{L^\infty_T \dot \bH^s}+ \| U^n_N - U_N\|_{L^\infty_T \dot \bH^s}+ \| U_N-U\|_{L^\infty_T \dot \bH^s} \label{disun1} \\
& \leq \tC (\norm{U_0}_s) \, 
 \big(\| (\uno - \Pi_{ N}) U_0^n\|_s + \| (\uno - \Pi_{ N}) U_0 \|_s+  N^{s_0+2-s}\big)+ \| U^n_N - U_N\|_{L^\infty_T \dot \bH^s} \, \nonumber \\ 
& \leq \tC (\norm{U_0}_s) \, 
 \big( \| (\uno - \Pi_{ N}) U_0 \|_s+  N^{s_0+2-s}\big) +
 \tC (\norm{U_0}_s) \, \|  U_0^n - U_0\|_s 
 + \| U^n_N - U_N\|_{L^\infty_T \dot \bH^s} 
 \,  . \nonumber
\end{align}
 For any $\varepsilon > 0 $, 
since 
$s>s_0+2$,  there exists $N_\varepsilon \in \N$  (independent of $ n$) such that 
\begin{equation} \label{disun2}
 \tC (\norm{U_0}_s)
\big(  \| (\uno - \Pi_{ N_\varepsilon}) U_0 \|_s+  N_\varepsilon^{s_0+2-s} \big) \leq \varepsilon / 2  \, . 
\end{equation}
Consider now  the term  $ \| U^n_N - U_N\|_{L^\infty_T \dot \bH^s} $.  
As $\Pi_{ N} U_0, \Pi_{ N} U_0^n \in \dot C^\infty$, the solutions $U_N(t)$, $U_N^n(t) $ belong actually to $ \dot \bH^{s+2}$. 
 By interpolation and by item $(ii)$ of Corollary \ref{corobello} applied with $ s \leadsto s+2$ one has, for $ s+ 2 = \theta s_0 + (1-\theta) (s+2) $, 
\begin{align}
\| U^n_{N_\varepsilon} - U_{N_\varepsilon}\|_{L^\infty_T \dot \bH^s}
& \leq \tC \big( \| \Pi_{N_\varepsilon}U_{0}\|_{s+2},\| \Pi_{N_\varepsilon} U^n_{0} 
\|_{s+2}\big) \| U^n_{N_\varepsilon} - U_{N_\varepsilon}\|_{L^\infty_T \dot 
\bH^{s_0}}^{\theta} \nonumber \\
& \leq\tC \big( N_\varepsilon^2\| U_{0}\|_{s}\big) \| U^n_{N_\varepsilon} - U_{N_\varepsilon}\|_{L^\infty_T \dot \bH^{s_0}}^{\theta} \, .  \label{disun3}
\end{align}
Arguing in the same way of the proof of \eqref{over1} one obtains 
\begin{equation} \label{disun4}
\| U^n_{N_\varepsilon} - U_{N_\varepsilon}\|_{L^\infty_T \dot \bH^{s_0}}\leq 
\tC \big( \| U_0\|_{s_0+2}\big) \| \Pi_{ N_\varepsilon} (U^n_{ 0}- U_{ 0})\|_{s_0} \, .
\end{equation}
By \eqref{disun1}-\eqref{disun4}, we have 
$ \limsup_{n\to \infty} \| U^n-U\|_{L^\infty_T \dot \bH^s}\leq \varepsilon $, 
$ \forall \varepsilon\in (0,1).$
\end{proof}

\appendix

\section{Bony-Weyl calculus in periodic H\"older spaces}
\label{app:par}

In this Appendix we develop in a self-contained manner paradifferential calculus for  
space periodic symbols $ a(x, \xi) $ which 
belong to the Banach scale of H\"older  spaces $ W^{\varrho,\infty} (\T^d) $.
The main results are the continuity Theorem \ref{thm:cont2} and the 
composition Theorem \ref{thm:comp2},
which require mild regularity assumptions of the symbols 
in the space variable,
and imply 
Theorems \ref{thm:contS} and \ref{thm:compS}. 
We first  provide some preliminary technical results.  

\paragraph{Technical lemmas.}
In the following we  denote by
$\pa_m$, $m= 1, \ldots, d$ the {\em discrete derivative}, defined for functions $f \colon \Z^d \to \C$ as
\begin{equation}\label{discrete-der}
(\pa_m f)(n) := f(n) - f(n - \vec{e}_m) \, , \quad n \in \Z^d \, ,
\end{equation}
where $ \vec{e}_m $ denotes the usual unit basis vector of $ \N_0^d $ 
with $ 0 $ components expect the
$ m$-th one. 
Given  a multi-index $\beta \in \N^d_0 $, we set
$ \pa^\beta f := \pa_1^{\beta_1} \cdots \pa_d^{\beta_d} f  $. 

We shall use the Leibniz rule for finite differences in the following form: 
given $ k \in \N $, $ m = 1, \ldots, d $, there exist constants 
$ C_{k_1,k_2} $ (binomial coefficients) such that  
\begin{equation}\label{finitediff}
(\pa_m^k) (f g) (n) = \sum_{k_1+k_2= k} 
C_{k_1,k_2}
(\pa_m^{k_1} f) (n- k_2) (\pa_m^{k_2} g) (n) \, . 
\end{equation}
Moreover, when using discrete derivatives, the analogous of the integration by parts formula is given by the 
{\em Abel resummation formula}:
\begin{align}
\label{Abeld}
\sum_{n \in \Z^d} e^{\ii n \cdot z} \beta (x,n) = 
- \frac{1}{e^{\ii \vec e_m \cdot z}-1} 
\sum_{n \in \Z^d} e^{\ii n \cdot z} (\pa_m \beta) (x,n)  \, 
, \quad \forall \, m = 1, \ldots, d \, . 
\end{align}

\begin{lemma}
\label{lem:K}
 Let $K \colon \T^d \to \C$ be a function satisfying, 
for  constants $A $ and $ B   $, the estimate   
\begin{equation}\label{ipok}
\abs{K(y)} \lesssim A^d B \min\left( 1, \min_{1\leq m \leq d}	\frac{1}{ |A \,  2 \sin \frac{y_m}{2}|^{(d+1)}}\right) \, , \quad \forall y \in \T^d \, .  
\end{equation}
Then   
\begin{equation}\label{okok}
\int_{\T^d} \abs{K(y)} \di y \lesssim  B \, .
\end{equation}
\end{lemma}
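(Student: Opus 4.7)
The plan is to reduce the torus integral to a universal (in $A$) integral over $\R^d$ by rescaling, and then show that the decay rate $d+1$ in the hypothesis beats the $d$-dimensional volume growth by exactly one power, which is what gives integrability.

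First I would dispose of the case $A \lesssim 1$: the plain bound $|K(y)| \lesssim A^d B$ combined with $|\T^d|=(2\pi)^d$ already gives $\int_{\T^d} |K| \lesssim B$. So from now on I may assume $A \geq 1$.

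For $A \geq 1$, I would use the elementary inequality $|2\sin(t/2)| \geq (2/\pi)|t|$ for $t \in [-\pi,\pi]$ to replace $|2\sin(y_m/2)|$ by $|y_m|$ (up to a dimensional constant) inside the hypothesis \eqref{ipok}. Then the change of variables $z = A y$ sends $\T^d$ to the cube $(-A\pi, A\pi)^d$, cancels the prefactor $A^d$ against the Jacobian $A^{-d}$, and leads to
\begin{equation*}
\int_{\T^d} |K(y)|\, dy \lesssim B \int_{(-A\pi,A\pi)^d} \min\!\left(1,\, \min_{1\leq m \leq d} \frac{1}{|z_m|^{d+1}}\right) dz \leq B \int_{\R^d} \min\!\left(1,\, \min_{1\leq m \leq d} \frac{1}{|z_m|^{d+1}}\right) dz.
\end{equation*}
The task is thus reduced to showing that the rightmost integral, which no longer involves $A$ or $B$, is finite.

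To estimate the $\R^d$ integral I would split $\R^d$ into the unit cube $Q=(-1,1)^d$, on which the integrand is $\leq 1$ and the volume is bounded, and its complement, which I further decompose into $d$ sectors $E_m := \{|z_m| = \max_k |z_k|,\ |z_m|>1\}$ according to which coordinate is largest in absolute value. On $E_m$ one has $\min_k |z_k|^{-(d+1)} = |z_m|^{-(d+1)}$, while the remaining $d-1$ coordinates are confined to an interval of length $2|z_m|$. Integrating out the transverse variables gives a one-dimensional tail integral
\begin{equation*}
\int_{E_m} \frac{dz}{|z_m|^{d+1}} \lesssim \int_1^{\infty} \frac{u^{d-1}}{u^{d+1}}\, du = \int_1^\infty \frac{du}{u^2} < \infty,
\end{equation*}
and summing over $m$ yields the required uniform bound.

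There is no serious obstacle here: the proof is essentially a scaling argument together with a polar-type tail estimate. The only point where one must be a little careful is making sure the sinusoidal lower bound is applied \emph{coordinatewise} so that the full decay exponent $d+1$ survives the rescaling, since it is precisely the gap between $d+1$ (decay) and $d-1$ (coordinate-wise volume growth on the sectors $E_m$) that makes the $\R^d$ integral converge.
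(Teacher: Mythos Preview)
Your proof is correct and essentially mirrors the paper's: both dispose of $A \lesssim 1$ trivially, then for $A \geq 1$ rescale by $A$ and bound the resulting $A$-independent tail integral. The only cosmetic difference is that the paper passes to the Euclidean norm via $\max_m |\sin(y_m/2)| \gtrsim |y|$ and reduces to a radial integral $\int_{|z|>1} |z|^{-(d+1)}\,dz$, whereas you keep the $\ell^\infty$ structure and handle the tail by your sector decomposition.
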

\begin{proof}
If $ A \leq 1 $ the bound \eqref{okok} follows trivially integrating the first inequality in
\eqref{ipok}. Then we suppose $ A > 1 $. 
We split the integral in \eqref{okok} as  
\begin{equation}
\label{okok1}
\int_{\T^d} \abs{K(y)} \di y= \int\limits_{\T^d\cap \{|y|\leq \frac{1}{A}\}} \abs{K(y)} \di y+\int\limits_{\T^d\cap \{ |y| > \frac{1}{A}\}} \abs{K(y)} \di y \, .
\end{equation}
We bound the first integral using the first inequality in \eqref{ipok}, getting 
\begin{equation}\label{pe1}
\int\limits_{\T^d\cap \{|y|\leq \frac{1}{A}\}} \abs{K(y)} \di y\lesssim A^dB \, 
{\rm meas}\left(y \in [-\pi,\pi]^d \colon \ {|y|\leq \frac{1}{A}} \right) \lesssim B \, .
\end{equation}
To bound the second integral in \eqref{okok1} we use that, for some $c >0$,  
$ \max_{1\leq m \leq d} \big| \sin \big(\frac{y_m}{2}\big) \big| \geq c|y| $, 
$ \forall y\in [-\pi,\pi]^d $,  
and therefore the second inequality in \eqref{ipok} implies 
\begin{equation}\label{pe2} 
\int\limits_{\T^d\cap \{ |y| > \frac{1}{A}\}} \abs{K(y)} \di y\lesssim 
A^dB\int\limits_{\{y \in \R^d \, : \,  |y|>\frac{1}{A}\}}  \frac{\di y}{|Ay|^{d+1}}\stackrel{z=Ay}
{\lesssim} B\int\limits_{ \{|z|>1\}} 
\frac{\di z}{|z|^{d+1}}  \lesssim B \, .
\end{equation}
The bounds \eqref{pe1}-\eqref{pe2} and \eqref{okok1} imply \eqref{okok}. 
\end{proof}

The next lemma  represents a Fourier multiplier operator 
acting on periodic functions as a convolution integral on $ \R^d $. 
The key step is the use of Poisson summation formula. 

\begin{lemma}\label{lem:FC}
Let $ \chi \in {\cal S} (\R^d) $. 
Then the Fourier multiplier
$ \chi_\theta ( D)  :=  \chi ( \theta^{-1} D) $, $ \theta \geq 1 $,
 acting on a periodic function 
$ u \in L^1 (\T^d, \C) $ can represented by
\begin{align}
 \chi_\theta ( D) u    
& =
 \int_{\R^d} u(y) \psi_{\theta} ( x- y  ) \di y = 
  \int_{\R^d} u(x-y) \psi_{\theta} ( y  ) \di y  \label{laco}
\end{align}
where 
$ \psi_{\theta} (z) := \theta^{d} \, \psi ( \theta z) $ and 
 $ \psi $ denotes the anti-Fourier transform of $ \chi $ on $\R^d $. 
\end{lemma}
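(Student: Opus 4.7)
\medskip

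\noindent\textbf{Proof plan for Lemma \ref{lem:FC}.}

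The strategy is to start from the Fourier series definition of $\chi_\theta(D)u$, exchange sum and integral, identify the resulting kernel as a periodization of $\psi_\theta$ via Poisson summation, and then use the periodicity of $u$ to unfold the integral over $\T^d$ into an integral over $\R^d$.

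First I would write, using the Fourier coefficients $u_j = (2\pi)^{-d}\int_{\T^d}u(y)e^{-\ii j\cdot y}\di y$,
\[
\chi_\theta(D)u(x) \;=\; \sum_{j\in\Z^d}\chi(\theta^{-1}j)\,u_j\,e^{\ii j\cdot x}
\;=\; \frac{1}{(2\pi)^d}\int_{\T^d} u(y)\,K_\theta(x-y)\,\di y,
\]
where $K_\theta(z):=\sum_{j\in\Z^d}\chi(\theta^{-1}j)\,e^{\ii j\cdot z}$; the exchange of sum and integral is legitimate because $\chi\in\mathcal{S}(\R^d)$ forces absolute convergence and $u\in L^1(\T^d)$.

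The key step is to apply the Poisson summation formula to rewrite $K_\theta$. With the convention $\psi(x)=(2\pi)^{-d}\int_{\R^d}\chi(\xi)e^{\ii x\cdot\xi}\di\xi$, a direct change of variables gives that $\psi_\theta(x)=\theta^d\psi(\theta x)$ is precisely the inverse Fourier transform of $\xi\mapsto\chi(\theta^{-1}\xi)$. The standard periodization identity (proved by computing the Fourier coefficients of the $2\pi$-periodic function $z\mapsto\sum_{k\in\Z^d}\psi_\theta(z+2\pi k)$) then yields
\[
K_\theta(z) \;=\; (2\pi)^d\sum_{k\in\Z^d}\psi_\theta(z+2\pi k).
\]
Substituting this into the previous display produces
\[
\chi_\theta(D)u(x) \;=\; \int_{\T^d}u(y)\sum_{k\in\Z^d}\psi_\theta(x-y+2\pi k)\,\di y.
\]

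Finally, since $u$ is $2\pi$-periodic, for each $k\in\Z^d$ the change of variables $y\mapsto y+2\pi k$ converts $\int_{\T^d}u(y)\psi_\theta(x-y+2\pi k)\,\di y$ into $\int_{\T^d-2\pi k}u(y)\psi_\theta(x-y)\,\di y$. The cells $\T^d-2\pi k$ tile $\R^d$, so summing over $k$ produces $\int_{\R^d}u(y)\psi_\theta(x-y)\,\di y$, which is the first identity in \eqref{laco}; the second follows from the substitution $y\leadsto x-y$. The main (mild) obstacle is simply keeping track of the $(2\pi)^d$ normalization constants in the Poisson summation formula and of the anti-Fourier transform convention used to define $\psi$; once these are fixed consistently, the computation is mechanical, and the absolute convergence needed to justify all the interchanges is automatic from $\chi\in\mathcal{S}(\R^d)$ and $u\in L^1(\T^d)$.
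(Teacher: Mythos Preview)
Your argument is correct and follows essentially the same route as the paper: write $\chi_\theta(D)u$ as convolution on $\T^d$ with the kernel $\sum_j\chi(\theta^{-1}j)e^{\ii j\cdot z}$, apply Poisson summation to identify this kernel with the periodization of $\psi_\theta$, and then unfold the torus integral to $\R^d$ using the periodicity of $u$. The only differences are cosmetic (where the $(2\pi)^d$ factor sits) and your slightly more explicit justification of the sum--integral interchange.
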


\begin{proof}
For $ \theta \geq 1  $ we  write
\begin{equation}\label{opev}
\chi \left( \frac{D}{\theta}\right)   u  = 
\int_{\T^d} u(y) \,  h_{\theta} (x-y) \di y \qquad {\rm where} \qquad 
h_\theta (z) :=\frac{1}{(2\pi)^d} \sum_{j \in \Z^d} \chi \left( \frac{j}{\theta}\right) e^{\ii j \cdot z} \, . 
\end{equation} 
Then the Fourier transform 
$ \wh {\psi_\theta} (\xi) =  
\int_{\R^d} \, \theta^d \, \psi (\theta z) \, e^{- \ii z \cdot\xi} \di z = 
\int_{\R^d}  \psi (y)\,  e^{- \ii y \cdot \frac{\xi}{\theta}} \di y  =
\wh {\psi} \left(\frac{\xi}{\theta} \right) =  \chi \left(\frac{\xi}{\theta} \right) $, 
and, using {\it Poisson summation formula}, we 
write the periodic function $ h_\theta (z) $ in \eqref{opev} as 
$$
h_\theta (z)  = \frac{1}{(2\pi)^d}\sum_{j \in \Z^d} \wh {\psi_\theta} (j)  e^{\ii j \cdot z} = \sum_{j \in \Z^d}
\psi_\theta ( z + 2 \pi j ) \, . 
$$
Therefore the integral \eqref{opev} is 
\begin{align*}
\chi (\theta^{-1} D) u   &  = 
\sum_{j \in \Z^d} \int_{\T^d} u(y) \, \psi_\theta ( x- y + 2 \pi j ) \di y \nonumber 
 = \sum_{j \in \Z^d} \int_{[0, 2\pi]^d +  2 \pi j} u(y) \psi_\theta ( x- y  ) \di y \nonumber	\\	
& =
 \int_{\R^d} u(y) \psi_\theta ( x- y  ) \di y = 
  \int_{\R^d} u(x-y) \psi_\theta ( y  ) \di y  
\end{align*}
proving \eqref{laco}.
\end{proof}

We now give the definition and basic properties of the H\"older spaces
$ W^{\vr, \infty}  (\T^d )$.

\begin{definition}{\bf (Periodic H\"older spaces)}\label{sec:holder}
Given $\vr \in \N_0$, we denote by $ W^{\vr, \infty}  (\T^d ) $ the space of 
continuous functions $ u : \T^d \to \C $, $ 2 \pi$-periodic in each variable
$ (x_1, \ldots, x_d) $, 
whose derivatives of order  $\vr$ are in $L^\infty$, equipped with the norm 
$ \| u \|_{W^{\varrho,\infty}} := 
\sum_{|\alpha| \leq \vr} \| \pa_x^\alpha u \|_{L^\infty} $,  
$ \alpha \in \N_0^d $.
In case  $ \varrho >0$,  $\vr \notin \N$,  we denote  $ \floor{\varrho} $ the integer part of 
$ \varrho $, and 
we define $  W^{\vr, \infty}  (\T^d ) $ as the space of  functions 
$ u $ in $  C^{\floor{\vr}}(\T^d,\C)  $
whose derivatives of order  $ \floor{\vr} $ 
are $ (\varrho - \floor{\varrho}) $-H\"older-continuous, that is  
$$
[\pa_x^\alpha u]_{\vr} := \sup_{x\neq y}\frac{|\pa_x^\alpha u(x)- \pa_x^\alpha u(y)|}{|x-y|^{\varrho - \floor{\vr}}} < + \infty \, , \ \ \  \forall |\alpha| = \floor{\vr} \, , 
$$
equipped with the norm 
$$
\| u \|_{W^{\varrho,\infty}}  := 
{\sum_{|\alpha| \leq \floor{\vr}} \| \pa_x^\alpha u \|_{L^\infty} + \sum_{|\alpha| = \floor{\vr}} [\pa_x^\alpha u]_{\vr}}  \, . 
$$
For $ \varrho = 0 $ the norm $ \| \ \|_{W^{\varrho,\infty}} = \| \ \|_{L^{\infty}}  $.
\end{definition}

The H\"older spaces $ W^{\vr, \infty} (\T^d) $ can be described by 
the Paley-Littlewood decomposition of a function.
Consider the locally finite partition on unity 
\begin{equation}\label{PUN}
1 =  \chi (\xi ) + \sum_{k \geq 1} \varphi(2^{-k}\xi) \, , \quad 
\varphi (z) := \chi (z) -  \chi (2z) \, , 
\end{equation}
where $ \chi : \R^d \to \R  $ is the
cut-off function defined in \eqref{def-chi}. It induces the decomposition
of a distribution $ u \in {\cal S}' (\T^d) $ as
\begin{equation}
\label{LP}
u = \sum_{k \geq 0} \Delta_k u \quad {\rm where}
\quad
\Delta_0:= \chi(D) \, , 
\  \Delta_k := \varphi (2^{-k} D) = \chi_{2^k}(D)- \chi_{2^{k-1}}(D) \, , \ k \geq 1 \, . 
\end{equation}
We also set
\begin{equation}\label{S_k}
S_k  := \sum_{0 \leq j \leq k} \Delta_j   = \chi_{2^k}(D) \, . 
\end{equation}
The Paley-Littlewood theory of the  H\"older spaces $ W^{\varrho,\infty} (\T^d)$
follows as in $ \R^d $, see e.g. \cite{Met}, 
once we represent the Fourier multipliers $ \Delta_k  $ 
as  integral convolution operators on $ \R^d $,  by Lemma \ref{lem:FC}. 
In particular the following 
smoothing estimates hold: 
for any $ \alpha \in \N^d_0 $, $ \varrho \geq 0 $, 
\begin{equation}\label{sm3}
\norm{\pa_x^\alpha S_k u}_{L^\infty} \lesssim 2^{k(| \alpha| -\varrho)} \norm{ u }_{W^{\varrho,\infty}} \, , 
\end{equation}
and, for any $ \varrho  >0  $, 
\begin{equation}\label{smo1}
\| u - \chi_\theta (D) u \|_{L^\infty} \lesssim \theta^{-\varrho} \| u \|_{W^{\varrho,\infty}}  \, . 
\end{equation}
In this way it results as in $ \R^d $ 
that the H\"older norms $ \norm{ \ }_{W^{\vr, \infty}} $ satisfy   interpolation estimates.
In particular we shall use that, 
given $\vr, \vr_1, \vr_2 \geq 0 $,  
\begin{equation}\label{inter}
\begin{aligned}
&\| u v \|_{W^{\vr,\infty}} \lesssim
\| u \|_{W^{\vr,\infty}} \| v \|_{L^{\infty}}
 + \| u \|_{L^{\infty}} \| v \|_{W^{\vr, \infty}} \\
& \norm{u }_{W^{\vr, \infty}} \lesssim \norm{u}_{W^{\vr_1, \infty}}^\theta \, \norm{u}_{W^{\vr_2,\infty}}^{1-\theta} \, , \ \ \ \  \vr = \theta \vr_1 + (1-\theta) \vr_2  \, , \ 
\theta \in (0,1) \, . 
\end{aligned}
\end{equation}

\paragraph{H\"older estimates of regularized symbols.}

In order to prove 
estimates of the regularized symbol $ a_\chi $ defined in 
\eqref{achi} in H\"older spaces (Lemma \ref{lem:achi})
we represent it  
as a convolution integral on $ \R^d $, by Lemma \ref{lem:FC}, 
\begin{equation}
\label{achi.int}
a_\chi(x, \xi) = 
\int_{\R^d} a(x-y, \xi ) \, \psi_{\epsilon \la \xi \ra}(y) \, \di y 
\end{equation}
where $\psi_\theta(z) = \theta^d \psi ( \theta z ) $ 
and $ \psi $ is the anti-Fourier transform of $ \chi $.

In the proof of Lemma \ref{lem:achi} we shall use the following estimate.

\begin{lemma} 
For any $ \beta \in \N_0^d $, $ u \in L^\infty (\T^d) $, we have 
\begin{equation}\label{bound-pro}
\| \partial_\xi^\beta \chi_{\epsilon \langle \xi \rangle} (D) u \|_{L^\infty} 
\lesssim  \la \xi \ra^{-|\beta|} \|u \|_{L^\infty} \, . 
\end{equation}
\end{lemma}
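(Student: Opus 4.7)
The plan is to represent the Fourier multiplier as a convolution and reduce the bound to an $L^1$ estimate on the convolution kernel. By Lemma \ref{lem:FC}, for any $u \in L^\infty(\T^d)$ and $\xi \in \R^d$,
\[
\chi_{\epsilon \la \xi \ra}(D) u (x) = \int_{\R^d} u(x-y)\, \psi_{\epsilon \la \xi \ra}(y)\, \di y \, , \qquad \psi_\theta(y) := \theta^d \psi(\theta y) \, ,
\]
where $\psi \in \mathcal{S}(\R^d)$ is the anti-Fourier transform of $\chi$. Since $u$ is $\xi$-independent, I would differentiate under the integral sign to get
\[
\partial_\xi^\beta \big[\chi_{\epsilon \la \xi \ra}(D) u\big](x) = \int_{\R^d} u(x-y)\, \partial_\xi^\beta \psi_{\epsilon \la \xi \ra}(y)\, \di y \, ,
\]
so that by H\"older's inequality $\| \partial_\xi^\beta \chi_{\epsilon \la \xi \ra}(D) u \|_{L^\infty} \leq \| u \|_{L^\infty}\, \| \partial_\xi^\beta \psi_{\epsilon \la \xi \ra} \|_{L^1(\R^d)}$. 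The entire problem thus collapses to proving that
\[
\big\| \partial_\xi^\beta \psi_{\epsilon \la \xi \ra} \big\|_{L^1(\R^d)} \lesssim \la \xi \ra^{-|\beta|} \, .
\]

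Next I would exploit the fact that $\psi_{\epsilon \la \xi \ra}(y)$ depends on $\xi$ only through the scalar $\theta := \epsilon \la \xi \ra$. Setting $F(\theta,y) := \theta^d \psi(\theta y)$, Fa\`a di Bruno's formula gives
\[
\partial_\xi^\beta \big[F(\theta(\xi),y)\big] = \sum_{k, \vec m, \vec \gamma} c_{\beta,\vec m, \vec \gamma}\, \big(\partial_\theta^{k} F\big)(\theta,y)\, \prod_{j} \big(\partial_\xi^{\gamma_j}\theta\big)^{m_j} \, ,
\]
with $k=\sum_j m_j$ and $\sum_j m_j |\gamma_j| = |\beta|$. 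For the radial factors I would use the standard pointwise bound $|\partial_\xi^{\gamma_j}\theta| \lesssim \la \xi \ra^{1-|\gamma_j|}$ for $|\gamma_j|\geq 1$, so that $\prod_j|\partial_\xi^{\gamma_j}\theta|^{m_j} \lesssim \la \xi \ra^{k - |\beta|}$. For the kernel factor I would verify by direct computation and the change of variable $z = \theta y$ that $\partial_\theta^k F(\theta,y)$ is a finite linear combination of expressions of the form $\theta^{d-j}\, y^{\alpha}\, (\partial_y^{\alpha'}\psi)(\theta y)$ with $|\alpha|\leq k$, each having $L^1_y$-norm $\lesssim \theta^{-k}$ thanks to the Schwartz decay of $\psi$.

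Combining the two bounds, every term in the Fa\`a di Bruno expansion contributes at most $\theta^{-k} \la \xi \ra^{k-|\beta|} \simeq \la \xi \ra^{-|\beta|}$, which yields the desired $L^1$-estimate on $\partial_\xi^\beta \psi_{\epsilon \la \xi \ra}$; the case $\beta = 0$ is trivial since $\|\psi_\theta\|_{L^1(\R^d)} = \|\psi\|_{L^1(\R^d)}$ is independent of $\theta$. I do not expect any genuine obstacle here: the only mildly technical point is the scaling identity $\|\partial_\theta^k F(\theta,\cdot)\|_{L^1} \lesssim \theta^{-k}$, which is a one-line computation after the change of variable $z=\theta y$, and the bookkeeping in Fa\`a di Bruno, which is purely combinatorial.
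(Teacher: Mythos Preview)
Your proposal is correct and follows essentially the same route as the paper: represent $\chi_{\epsilon\la\xi\ra}(D)u$ as convolution with $\psi_{\epsilon\la\xi\ra}$ via Lemma~\ref{lem:FC}, differentiate under the integral, and then bound $\|\pa_\xi^\beta\psi_{\epsilon\la\xi\ra}\|_{L^1(\R^d)}\lesssim\la\xi\ra^{-|\beta|}$ using Fa\`a di Bruno together with the scaling of $\psi_\theta$. The paper simply records the $L^1$-bound \eqref{psi.der} and attributes it to Fa\`a di Bruno without spelling out the computation you sketch, so your version is in fact slightly more detailed but otherwise identical.
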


\begin{proof}
By \eqref{achi.int} we have, for all $ \beta \in \N^d_0 $, 
\begin{equation}\label{der-pezzi}
\pa_\xi^\beta \chi_{\epsilon \la \xi \ra}(D) u   = 
\int_{\R^d} u(x-y) \, \pa_\xi^\beta \psi_{\epsilon \la \xi \ra}(y) \, \di y \, . 
\end{equation}
By  the definition $\psi_{\epsilon \la \xi \ra}(y) = (\epsilon \la \xi \ra)^d \, \psi(\epsilon \la \xi \ra y)$ and Fa\`a di Bruno formula, we have that
\begin{equation}
\label{psi.der}
\int_{\R^d} \big| \pa_\xi^\beta \psi_{\epsilon \la \xi \ra}(y) \big| \, \di y  \lesssim  \la \xi \ra^{-|\beta|} , \quad \forall \xi \in \R^d \, . 
\end{equation}
Then \eqref{bound-pro} follows by \eqref{der-pezzi} and \eqref{psi.der}.
\end{proof}

The next  lemma provides estimates of the regularized 
symbol $ a_\chi $ in terms of the  symbol $ a $.

\begin{lemma}
\label{lem:achi}
{\bf (Estimates on regularized symbols)}
Let $ m \in \R $, $ N \in \N_0 $. 
\begin{enumerate}
\item
If $a \in \Gamma^m_{L^\infty}$, $ m \in \R $, then $ a_\chi $ 
defined in \eqref{achi}
 belongs to $\Sigma^m_{L^\infty}$ and
\begin{equation}
\label{achi.est}
\abs{a_\chi}_{m, L^{\infty}, N} \lesssim
\abs{a}_{m, L^{\infty}, N}  \, . 
\end{equation}
\item
 If   $a \in \Gamma_{H^{s_0-\vr}}^m$, $\varrho\geq 0$, $s_0 > \frac{d}{2}$,  then
 $a_\chi$ belongs to $\Gamma^{m+\vr}_{L^\infty}$ and 
\begin{equation}\label{simbolocutrec}
| a_\chi|_{m+\varrho, L^\infty, N} \lesssim | a|_{m, {H^{s_0-\varrho}},N} \, .
\end{equation}
\item If $a \in \Gamma^m_\hol$, $\vr > 0 $, then,
for any $\beta \in \N_0^d $, $\pa_\xi^\beta a_\chi - (\pa_{\xi}^\beta a)_\chi  \in \Sigma^{m-|\beta| -\vr}_{L^\infty}$ and 
\begin{equation}
\label{achi.est1}
\big| \pa_\xi^\beta a_\chi - (\pa_{\xi}^\beta a)_\chi \big|_{m-|\beta|-\vr, L^{\infty}, N} \lesssim 
\abs{a}_{m, \hol, N+ |\beta|}  \, . 
\end{equation}
\item   If $a \in \Gamma^m_\hol$, $\vr \geq 0 $, then, for any  $\alpha \in \N_0^d$ with $|\alpha| \geq \vr $,
$ \pa_x^\alpha a_\chi  = (\pa_x^\alpha a)_\chi \in \Sigma^{m+|\alpha| -\vr}_{L^\infty}$ and
\begin{equation}
\label{achi.est3}
\abs{\pa_x^\alpha a_\chi}_{m+|\alpha| -\vr, L^{\infty}, N} \lesssim 
\abs{a}_{m, \hol, N}  \, . 
\end{equation}
\item  If $a \in \Gamma^m_\hol$, $\vr > 0 $, then, $a- a_\chi \in \Gamma^{m-\vr}_{L^\infty}$ and 
\begin{equation}
\label{achi.est2}
\abs{a- a_\chi}_{m-\vr, L^{\infty}, N} \lesssim 
\abs{a}_{m, \hol, N}  \, . 
\end{equation}
\end{enumerate}
\end{lemma}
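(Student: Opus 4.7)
\medskip

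\noindent\textbf{Proof plan.} The unifying tool is the integral representation $a_\chi(x,\xi) = \int_{\R^d} a(x-y,\xi)\,\psi_{\epsilon\la\xi\ra}(y)\,\di y$ provided by Lemma \ref{lem:FC} (together with Poisson summation), combined with the bound $\int |\pa_\xi^\beta \psi_{\epsilon\la\xi\ra}(y)|\,\di y \lesssim \la\xi\ra^{-|\beta|}$ in \eqref{psi.der}. In every item, $\xi$-derivatives are handled by Leibniz, writing
\[
\pa_\xi^\beta a_\chi(x,\xi) = \sum_{\beta_1+\beta_2=\beta} c_{\beta_1,\beta_2}\,\big(\pa_\xi^{\beta_1}\chi_{\epsilon\la\xi\ra}\big)(D)\,\pa_\xi^{\beta_2} a(\cdot,\xi)(x),
\]
so the game is to estimate each piece, the $\beta_1=0$ term giving $(\pa_\xi^\beta a)_\chi$ and the $\beta_1\neq 0$ terms carrying the gain coming from the spectral localization of $\pa_\xi^{\beta_1}\chi_{\epsilon\la\xi\ra}(\xi')$ to the annulus $|\xi'|\sim \la\xi\ra$. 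The required spectral localization \eqref{spectr-loca} for $a_\chi$ is automatic for $\epsilon$ small, so only the seminorm bounds \eqref{seminorm} must be checked.

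For item $(1)$ the case $\beta=0$ is just $|a_\chi(x,\xi)| \leq \|a(\cdot,\xi)\|_{L^\infty} \|\psi\|_{L^1} \lesssim \la\xi\ra^m |a|_{m,L^\infty,0}$, and \eqref{psi.der} together with Leibniz and $\|\pa_\xi^{\beta_2}a(\cdot,\xi)\|_{L^\infty} \lesssim \la\xi\ra^{m-|\beta_2|}$ closes the general case, yielding \eqref{achi.est}. For item $(2)$, since $\chi_{\epsilon\la\xi\ra}(D) u$ has Fourier support in $\{|j|\lesssim \la\xi\ra\}$, Cauchy--Schwarz gives the Bernstein-type bound
\[
\|\chi_{\epsilon\la\xi\ra}(D) u\|_{L^\infty} \leq \sum_{|j|\lesssim \la\xi\ra} |\widehat u_j| \leq \Big(\sum_{|j|\lesssim \la\xi\ra} \la j\ra^{-2(s_0-\vr)}\Big)^{\!1/2}\!\|u\|_{H^{s_0-\vr}} \lesssim \la\xi\ra^{\vr}\|u\|_{H^{s_0-\vr}}
\]
since $s_0 > d/2$; applying this with $u = a(\cdot,\xi)$, and again combining with \eqref{psi.der} via Leibniz for the $\xi$-derivatives, produces \eqref{simbolocutrec}.

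The key step, and where I expect the main obstacle, is item $(3)$. In the Leibniz decomposition above, the terms with $\beta_1\neq 0$ are $\big(\pa_\xi^{\beta_1}\chi_{\epsilon\la\xi\ra}\big)(D)\,\pa_\xi^{\beta_2} a(\cdot,\xi)$, and one must quantify two separate gains: from the computation of $\pa_\xi^{\beta_1}\chi(\epsilon\xi'/\la\xi\ra)$ (via the chain rule and the homogeneity of $\chi$ under the rescaling $\xi'/\la\xi\ra$) the symbol has order $-|\beta_1|$; and since the cut-off derivatives $\pa_\xi^{\beta_1}\chi$ for $\beta_1\neq 0$ are supported in $\{1.1\leq |\cdot|\leq 1.9\}$, the multiplier is frequency-localized at $|\xi'|\sim \la\xi\ra/\epsilon$, so applied to a function in $W^{\vr,\infty}$ it gains an extra factor $\la\xi\ra^{-\vr}$. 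The cleanest way to rigorously extract this second gain is to split $\pa_\xi^{\beta_2}a(\cdot,\xi)$ along the Paley--Littlewood pieces \eqref{LP}, use the localization to pick out only the dyadic blocks $\Delta_k$ with $2^k\sim \la\xi\ra$, and bound each by \eqref{sm3}, i.e.\ $\|\Delta_k u\|_{L^\infty} \lesssim 2^{-k\vr}\|u\|_{W^{\vr,\infty}}$. Combining with $\|\pa_\xi^{\beta_2} a(\cdot,\xi)\|_{W^{\vr,\infty}} \lesssim \la\xi\ra^{m-|\beta_2|}$ and $|\beta_1|+|\beta_2|=|\beta|$, one obtains the total order $\la\xi\ra^{m-|\beta|-\vr}$ asserted in \eqref{achi.est1}.

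Items $(4)$ and $(5)$ follow relatively easily from the tools developed. For $(4)$, $\pa_x$ commutes with $\chi_{\epsilon\la\xi\ra}(D)$, so $\pa_x^\alpha a_\chi = (\pa_x^\alpha a)_\chi$; writing the right-hand side via $S_k$ with $2^k\sim\la\xi\ra/\epsilon$ and invoking \eqref{sm3} gives $\|\pa_x^\alpha S_k u\|_{L^\infty} \lesssim 2^{k(|\alpha|-\vr)}\|u\|_{W^{\vr,\infty}}$; summing over the relevant dyadic scales (which converges because $|\alpha|\geq \vr$, with the borderline integer case handled by the fact that $S_k$ is a single projector not an infinite sum) yields \eqref{achi.est3}, and Leibniz on $\xi$ closes the general seminorm estimate. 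For $(5)$, the case $\beta=0$ is a direct application of the smoothing bound \eqref{smo1} with $\theta = \epsilon\la\xi\ra$ to $a(\cdot,\xi)$; the general case, after Leibniz on $\pa_\xi^\beta(a-a_\chi)$, reduces either to \eqref{smo1} applied to $\pa_\xi^{\beta_2}a(\cdot,\xi)$, or to commutators of the form handled in item $(3)$, giving \eqref{achi.est2}.
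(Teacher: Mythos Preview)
Your proposal is correct and follows essentially the same route as the paper: Leibniz on $\pa_\xi^\beta a_\chi$ together with the $L^1$-bound \eqref{psi.der} for items~(1) and~(2); for item~(3), the annular support of $\pa_\xi^{\beta_1}\chi_{\epsilon\la\xi\ra}$ ($\beta_1\neq 0$) localizing to Littlewood--Paley blocks at scale $\sim\la\xi\ra$ to extract the $\la\xi\ra^{-\vr}$ gain; for item~(4), insertion of a single $S_k$ with $2^k\sim\epsilon\la\xi\ra$ and \eqref{sm3} (no dyadic sum is needed there, just the one projector); and for item~(5), the splitting $\pa_\xi^\beta(a-a_\chi)=[\pa_\xi^\beta a-(\pa_\xi^\beta a)_\chi]+[(\pa_\xi^\beta a)_\chi-\pa_\xi^\beta a_\chi]$ handled by \eqref{smo1} and item~(3). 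The only cosmetic discrepancy is that the paper writes the frequency localization in item~(3) via $S_k$ rather than $\Delta_k$, but since the multiplier $\pa_\xi^{\beta_1}\chi_{\epsilon\la\xi\ra}$ is annularly supported the two formulations are equivalent, and your phrasing is arguably cleaner.
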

\begin{proof}

{\sc Proof of \eqref{achi.est}.} Differentiating  
\eqref{achi} for any $ \beta \in \N_0^d $, we have
$$
\pa_\xi^\beta a_\chi (x,\xi) = \sum_{\beta_1+\beta_2 = \beta}
C_{\beta_1, \beta_2}  
\pa_{\xi}^{\beta_1} \chi_{\epsilon \langle \xi \rangle}(D) \, 
\pa_{\xi}^{\beta_2} a(\cdot, \xi )  \, .
$$
Then
\eqref{seminorm} and  \eqref{bound-pro}  directly 
imply \eqref{achi.est}.
\\[1mm]
{\sc Proof of \eqref{simbolocutrec}}
By the Cauchy-Schwartz inequality 
\begin{align*}
 \abs{a_\chi(x,\xi)} & = \Big| \sum_{n\in \Z^d} \chi_\epsilon \left(\frac{n}{\langle \xi\rangle}\right) \ \wh a(n,\xi) \, e^{\im n\cdot x} \Big|
  \leq 
  \sum_{n\in \Z^d} 
  \chi_\epsilon \Big(\frac{n}{\langle \xi\rangle}\Big) \, \frac{\la n \ra^{\vr}}{\la n \ra^{s_0 }} \, 
  \la n\ra^{s_0-\vr} \, \abs{\wh a(n,\xi)}  \\
  & \lesssim 
\Big(   \sum_{n \in \Z^d}  \chi_\epsilon^2 \Big(\frac{n}{\langle \xi\rangle} \Big)\frac{\la n\ra^{2\vr}}{\la n \ra^{2s_0}} \Big)^{1/2} \norm{a(\cdot, \xi)}_{H^{s_0-\vr}} 
  \lesssim \la \xi \ra^{m+\vr}  \abs{a}_{m, H^{s_0-\vr}, 0} \, .
 \end{align*}
  The case $N\geq 1$ follows in the same way.
\\[1mm]
{\sc Proof of \eqref{achi.est1}.} 
First, for any $ \xi \in \R^d $,  we define $ k \in \N $ such that 
$ 2^{k-1} \leq 2 \epsilon \langle \xi \rangle \leq 2^k $. Then, by the properties of 
the cut-off function $ \chi $ in \eqref{def-chi} and the projector $ S_k $ in \eqref{S_k} 
we have 
\begin{equation}
\label{chiS_k}
\pa_\xi^\beta \chi_\epsilon\left( \frac{\eta}{\langle \xi \rangle} \right)  =
\Big(\pa_\xi^\beta \chi_\epsilon \Big( \frac{\eta}{\langle \xi \rangle} \Big)\Big) \, S_k  \, , \quad  \  \forall \eta \in \R^d \ , 
\quad \forall \beta \in \N^d_0 \, . 
\end{equation}
Differentiating \eqref{achi} and using \eqref{chiS_k} we get
$$
\pa_\xi^\beta 
a_\chi - (\pa_\xi^\beta a)_\chi 
 = \sum_{\beta_1 + \beta_2 = \beta, \beta_1 \neq 0}  
C_{\beta_1 \beta_2} \,  \pa_\xi^{\beta_1} \chi_{\epsilon\la \xi \ra}(D) \, S_k \, \pa_\xi^{\beta_2} a(\cdot, \xi) \, , 
$$
and, using \eqref{bound-pro} and \eqref{sm3}
\begin{align*}
\big\| \big(\pa_\xi^\beta a_\chi - (\pa_\xi^\beta a)_\chi \big)(\cdot, \xi) \big\|_{L^\infty}  
&
\stackrel{} \lesssim
\sum_{\beta_1 + \beta_2 = \beta, \beta_1 \neq 0} 
\la \xi \ra^{-|\beta_1|}\, 2^{-k \vr}
 \big\| \pa_\xi^{\beta_2} a(\cdot, \xi) \big\|_{W^{\vr, \infty}} \\
& \lesssim \la \xi \ra^{m-|\beta|}\, 2^{-k \vr} \abs{a}_{m, W^{\vr, \infty}, |\beta|}
\lesssim \la \xi \ra^{m-|\beta|-\vr}\,  \abs{a}_{m, W^{\vr, \infty}, |\beta|}
\end{align*}
because $\la \xi \ra \lesssim 2^k$. This proves \eqref{achi.est1} for $N = 0$. For $N \geq 1$ the estimate is similar.
\\[1mm]
{\sc Proof of \eqref{achi.est3}}.
For any $ \xi \in \R^d $,  we define $ k \in \N $ such that 
$ 2^{k-1} \leq 2 \epsilon \langle \xi \rangle \leq 2^k $.
By \eqref{achi} and \eqref{chiS_k} with $\beta = 0$, we write
$ a_\chi (\cdot, \xi) = \chi_{\epsilon \langle \xi \rangle} (D) a(\cdot, \xi) =
\chi_{\epsilon \langle \xi \rangle} (D)  S_k  a(\cdot, \xi) $, 
and then 
$$
\begin{aligned}
\| \pa_x^\alpha a_\chi (\cdot, \xi) \|_{L^\infty} 
& =  
\|  \chi_{\epsilon \langle \xi \rangle} (D) \pa_x^\alpha S_k  a(\cdot, \xi)  \|_{L^\infty}  
\stackrel{\eqref{bound-pro}} \lesssim
\| \pa_x^\alpha S_k a(\cdot, \xi)  \|_{L^\infty} \\ 
& \stackrel{\eqref{sm3}} \lesssim
2^{k (|\alpha|-\varrho)} \|  a(\cdot, \xi)  \|_{W^{\varrho,\infty}} 
\stackrel{ \langle \xi \rangle \sim  2^k }\lesssim
\langle \xi \rangle^{|\alpha|-\varrho} \|  a(\cdot, \xi)  \|_{W^{\varrho,\infty}} \lesssim \langle \xi \rangle^{m+ |\alpha|-\varrho} |a|_{m,W^{\varrho,\infty},0}  
\end{aligned}
$$
by \eqref{seminorm}. 
This proves \eqref{achi.est3} with $N = 0$.  For $N \geq 1$ the estimate is similar.
\\[1mm]
{\sc Proof of \eqref{achi.est2}}. 
For any $ \beta \in \N_0^d $ we write
$ \pa_\xi^\beta (a - a_\chi) = 
\big[ \pa_\xi^\beta a - (\pa_\xi^\beta a)_\chi \big]  + \big[ (\pa_\xi^\beta a)_\chi - 
\pa_\xi^\beta a_\chi \big] $.
The first term is bounded, using \eqref{smo1} with 
$ \theta = \epsilon \langle \xi \rangle $, as 
$$
\big\| \big( \pa_\xi^\beta a - (\pa_\xi^\beta a)_\chi \big)(\cdot , \xi) \big\|_{L^\infty} 
\lesssim \langle \xi \rangle^{-\varrho}
\| \pa_\xi^\beta a (\cdot, \xi) \|_{W^{\varrho,\infty}} \lesssim
|a|_{m,W^{\varrho,\infty},|\beta|} \langle \xi \rangle^{m-\varrho-|\beta|}
$$
The second term satisfies the same bound by \eqref{achi.est1}.
This proves  \eqref{achi.est2}. 
\end{proof}

\paragraph{Change of quantization.}
In order to prove the boundedness Theorem \ref{thm:cont2}
and the composition Theorem \ref{thm:comp2},  it is convenient 
to pass from the Weyl quantization 
of a symbol $ a(x, \xi)$, defined in \eqref{opW}, 
to the standard quantization which is defined, given a symbol $ b(x, \xi) $, as 
\begin{align} 
\label{B} {\rm Op} {(b)}[u] & := \sum_{j \in \Z^d} \Big( \sum_{k \in \Z^d}
\widehat b \left(j-k, k\right)  \, u_k \Big) e^{\im j \cdot x } =
\sum_{k \in \Z^d} b (x, k)  \, u_k e^{\im k \cdot x } 
\, . 
\end{align}
We have the  change of quantization formula
\begin{equation}
\label{cambio}
\Opw{a} = \Op{b} \qquad \Leftrightarrow \qquad \wh  b(n, \xi ) := \wh a 
\big( n, \xi + \frac{n}{2} \big)  \, . 
\end{equation}
In the next lemma we estimate the norms of $ b $ in terms of those of $ a $. 
We remind that $ \Sigma^m_\mathscr{W}$ denotes the set of
{\em spectrally localized} symbols, i.e. satisfying \eqref{spectr-loca}. 

\begin{lemma}{\bf (Change of quantization)}
\label{thm:change}
Let $a \in \Sigma^m_{L^\infty}$,  $m \in \R $. 
If $ \delta > 0 $ in  \eqref{spectr-loca} is small enough, then 
 (cfr. \eqref{cambio})
\begin{equation}\label{defb}
 b(x,\xi) := \sum_{n\in\Z^d} \wh a \big( n,\xi + \frac{n}{2} \big)  \, e^{\im n\cdot x} 
\end{equation}
is a symbol in $  \Sigma^m_{L^\infty}$ satisfying 
\begin{equation}
\label{cambio2}
| b|_{m,L^\infty, N}\lesssim | a |_{m,L^\infty, N+d+1} \, , \quad \forall N\in \N_0 \, .
\end{equation}
\end{lemma}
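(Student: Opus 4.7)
My plan is to prove the pointwise $L^\infty$-bound $\|b(\cdot, \xi)\|_{L^\infty} \lesssim |a|_{m, L^\infty, d+1} \langle \xi \rangle^m$ (the $N = 0$ case of \eqref{cambio2}) by representing $b$ as a kernel integral on $\T^d$ via Poisson summation and then applying Lemma~\ref{lem:K}. The general $N \geq 1$ estimate then follows by observing that $\partial_\xi^\beta b(x, \xi) = \sum_n \widehat{\partial_\xi^\beta a}(n, \xi + n/2) e^{\im n \cdot x}$ has exactly the same form as $b$ with $a$ replaced by $\partial_\xi^\beta a \in \Gamma^{m - |\beta|}_{L^\infty}$, so applying the $N = 0$ bound to each $\partial_\xi^\beta a$ for $|\beta| \leq N$ yields \eqref{cambio2} with $N + d + 1$ seminorms of $a$. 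The spectral localization $b \in \Sigma^m_{L^\infty}$ follows directly from $\widehat b(n, \xi) = \widehat a(n, \xi + n/2) = 0$ whenever $|n| \geq \delta \langle \xi + n/2 \rangle$: for $\delta$ small enough this forces $|n| \geq \delta' \langle \xi \rangle$ with $\delta' = 2\delta/(2-\delta) < 1$, and the same inequality shows $\langle \xi + n/2 \rangle \sim \langle \xi \rangle$ on the effective support of the Fourier coefficients.

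For the $N = 0$ estimate, I would start from $\widehat a(n, \xi + n/2) = (2\pi)^{-d} \int_{\T^d} a(y, \xi + n/2) e^{-\im n \cdot y} \, dy$, insert a smooth cutoff $\varphi \in C^\infty_c(\R^d)$ equal to $1$ on the set where these coefficients can be nonzero, and interchange the (now finitely supported) sum over $n$ with the $y$-integral. Then I apply Poisson summation in $n$ with continuous interpolant $F_y(\eta) := \varphi(\eta/\langle \xi \rangle) \, a(y, \xi + \eta/2)$, which rewrites
\[
b(x, \xi) = \int_{\T^d} K_{y, \xi}(x - y) \, dy, \qquad K_{y, \xi}(w) := \sum_{k \in \Z^d} \check F_y(w + 2\pi k),
\]
where $\check F_y$ denotes the continuous inverse Fourier transform on $\R^d$ and $K_{y, \xi}$ is the $2\pi$-periodic kernel to be estimated in $L^1(\T^d)$.

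The core calculation is a pointwise bound for $\check F_y$ obtained by integration by parts $d+1$ times in $\eta$. The support of $\varphi(\cdot/\langle\xi\rangle)$ has volume $\lesssim \langle \xi \rangle^d$, and each $\partial_\eta$ applied to the integrand either lowers the order of $a$ by one or produces a factor $\langle \xi \rangle^{-1}$ from the cutoff. Integrating by parts $d+1$ times in the direction of largest $|w_j|$ one obtains
\[
|\check F_y(w)| \lesssim |a|_{m, L^\infty, d+1} \, \langle \xi \rangle^{m+d} \min\!\left(1, \min_{1 \leq j \leq d} (\langle \xi \rangle |w_j|)^{-(d+1)}\right).
\]
Summing over lattice translates $w + 2\pi k$, the $k = 0$ term dominates near $w \equiv 0 \pmod{2\pi}$ and matches $|w_j| \sim |2 \sin(w_j/2)|$ on $(-\pi, \pi]^d$, while the $k \neq 0$ contributions stay away from zero and add up to a uniformly bounded amount. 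Hence $K_{y, \xi}$ satisfies the hypothesis of Lemma~\ref{lem:K} with $A = \langle \xi \rangle$ and $B = |a|_{m, L^\infty, d+1} \langle \xi \rangle^m$, which gives $\int_{\T^d} |K_{y, \xi}(w)| \, dw \lesssim B$ uniformly in $y$, and the desired bound for $\|b(\cdot, \xi)\|_{L^\infty}$ follows by the triangle inequality. The main technical obstacle will be precisely this final kernel comparison: one must verify that summing the sharp $\R^d$-estimate for $\check F_y$ over $2\pi\Z^d$ reproduces the exact form required by Lemma~\ref{lem:K}, bridging Euclidean $|w_j|$ near the origin with periodic $|2\sin(w_j/2)|$ and bookkeeping the IBP terms involving derivatives of $\varphi$.
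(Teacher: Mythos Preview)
Your proposal is correct and follows the same overall architecture as the paper: write $b$ (or $\partial_\xi^\beta b$) as a kernel integral over $\T^d$, bound the kernel so that Lemma~\ref{lem:K} applies with $A=\langle\xi\rangle$, $B=|a|_{m,L^\infty,|\beta|+d+1}\langle\xi\rangle^{m-|\beta|}$, and conclude. The difference lies in how the kernel decay is obtained. The paper never leaves the periodic setting: it writes
\[
K(x,y)=\frac{1}{(2\pi)^d}\sum_{n\in\Z^d}(\partial_\xi^\beta a)(x-y,\xi+\tfrac{n}{2})\,\chi_\epsilon\Big(\frac{n}{\langle\xi\rangle}\Big)e^{\im n\cdot y}
\]
and applies the Abel resummation formula \eqref{Abeld} (discrete summation by parts) $d+1$ times in a single index $n_h$, which directly produces the factor $|2\sin(y_h/2)|^{-(d+1)}$ appearing in the hypothesis of Lemma~\ref{lem:K}. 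Your route instead passes through $\R^d$ via Poisson summation, performs continuous integration by parts on $\check F_y$, and then periodizes; this is valid but introduces exactly the bookkeeping you flagged as the ``main technical obstacle'' (matching $|w_j|$ with $|2\sin(w_j/2)|$ and controlling the lattice sum over $k\neq 0$). The paper's discrete approach avoids that detour entirely, since Abel resummation already outputs the periodic weight $|e^{\im y_h}-1|^{-1}=|2\sin(y_h/2)|^{-1}$ in the right form.
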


\begin{proof}
Since $ a $  satisfies \eqref{spectr-loca} with $\delta  $ small enough, it follows that $b$ satisfies \eqref{spectr-loca}.
In order to prove \eqref{cambio2} we differentiate \eqref{defb} obtaining that, 
for any $ \beta \in \N_0^d $, 
$$
\partial_\xi^\beta  b(x,\xi)
= \sum_{n\in\Z^d} \wh{ \partial_\xi^\beta a} (n,\xi + \frac{n}{2}) \, e^{\im n\cdot x} 
 = 
 \sum_{n\in\Z^d} \wh{ \partial_\xi^\beta a} (n,\xi + \frac{n}{2})  
 \ \chi_\epsilon \left( \frac{n}{\la \xi\ra}\right)
\, e^{\im n\cdot x}  
$$
for some $ \epsilon = \epsilon(\delta') > 0 $, 
where in the last equality we used that the sum  is actually restricted over the indexes for which   $|n|\leq \delta' \langle \xi \rangle$, $\delta' \in (0,1)$.
Then we represent $\pa_\xi^\beta b$ as 
the integral 
\begin{equation} \label{defK1}
\pa_\xi^\beta b(x,\xi)=\int_{\T^d}  K(x, y)\, \di y \, , \quad
K(x,y) :=  \frac{1}{(2\pi)^d} \sum_{n \in \Z^d}  (\partial_\xi^\beta a) \big( x-y,\xi + \frac{n}{2} \big) 
\,
\chi_\epsilon \left(\frac{n}{\la \xi \ra }\right)
\, e^{\im n\cdot y}  \, . 
\end{equation}
We are going to estimate the $ L^1 $-norm of $ K(x, \cdot ) $ using 
Lemma \ref{lem:K}. First note that,  
since  $ a \in \Sigma^m_{L^\infty}$, 
we have 
$\la \xi + \frac{n}{2}\ra \sim \la \xi \ra$
on the support of  $ \wh {\partial_\xi^\beta a}  (n,\xi + (n/2)) $,   and then we bound \eqref{defK1} as
\begin{equation}
\label{cambio3}
|K(x,y)| \lesssim
 \sum_{|n| \leq \delta' \la \xi \ra  } 
 | a|_{m, L^\infty, |\beta|} \la \xi \ra^{m-|\beta| } \lesssim 
 | a |_{m, L^\infty, |\beta| }\,   \langle \xi\rangle^{d + m - |\beta|} \  ,
\end{equation}
 uniformly in $ x $. 
Moreover, using Abel resummation formula \eqref{Abeld} and the Leibniz rule \eqref{finitediff}
for finite differences, we get, for any $ h =  1, \ldots, d $, 
$$
K(x,y) = \frac{1}{\left( e^{\im y_h}-1 \right)^{d+1}}
\sum_{ k_1 +k_2   = d+1} C_{k_1, k_2} \sum_{|n| \leq \delta' \la \xi \ra } 
\pa_h^{k_1}  (\partial_\xi^\beta a) \big( x-y,\xi + \frac{n}{2} \big) 
\, \pa_h^{k_2} \chi_\epsilon \left(\frac{n}{\la \xi  \ra }\right)
\, e^{\im n\cdot y} \, .
$$
Then, using \eqref{seminorm} and that 
$
\displaystyle{ \big| \pa_h^k  \chi_\epsilon \big( \frac{n}{\la \xi\ra} \big) \big| \lesssim  
\la \xi \ra^{-k}} $, $\forall h = 1, \ldots, d$,
 we estimate
\begin{align}\label{cambio4}
| K(x,y)| \lesssim \frac{\la \xi \ra^{m - (d+1) - |\beta|}}{|2\sin(y_h/2)|^{d+1}} \  |a|_{m, L^\infty, |\beta| + d+1} \sum_{|n| \leq \delta' \la \xi \ra } 1 
\lesssim \, 
\frac{\la \xi \ra^{d+m  - |\beta|} \,  |a|_{m, L^\infty, |\beta| + d+1}}{|\la \xi \ra \, 2\sin(y_h/2)|^{d+1}} \ 
\end{align}
uniformly in $x$. 
In view of \eqref{cambio3}-\eqref{cambio4}
we apply Lemma \ref{lem:K} with 
$A = \langle \xi\rangle$ and 
$B= \la \xi \ra^{m  - |\beta|} \,  |a|_{m, L^\infty, |\beta| + d+1}$ obtaining 
$$
\big| \pa_\xi^\beta b(x,\xi) \big| \leq \int_{\T^d} |K(x,y)| \di y 
\lesssim\la \xi \ra^{m  - |\beta|} \,  |a|_{m, L^\infty, |\beta| + d+1} \, , \quad \forall (x,\xi)\in \T^d \times \R^d,$$
that proves \eqref{cambio2}.
\end{proof}

\paragraph{Continuity.}

We now prove boundedness estimates  in Sobolev spaces
of  operators with spectrally localized symbols, 
requiring derivatives in $\xi$ of the symbol and 
no derivatives in $x$. 

\begin{theorem}{\bf (Continuity)}
\label{thm:cont2}
Let $a \in \Sigma^m_{L^\infty}$ with $m \in \R$. Then 
$\Op{a}$ defined in \eqref{B} extends to a bounded operator from 
$  H^s \to  H^{s-m}$, for any $ s \in \R $,  satisfying
\begin{equation}
\label{cont}
\norm{\Op{a}u}_{{s-m}} \lesssim  \, \abs{a}_{m, L^\infty, d+1} \, \norm{u}_{{s}} .
\end{equation}
Moreover, if $a$  fulfills \eqref{spectr-loca} 
with $ \delta > 0 $ small enough, then the operator  $\Opw{a} $ defined in \eqref{opW} satisfies 
\begin{equation}
\label{cont0}
\norm{\Opw{a}u}_{s-m} \lesssim  \, \abs{a}_{m, L^{\infty}, 2(d+1)} \, 
\norm{u}_{s} \, . 
\end{equation}

\end{theorem}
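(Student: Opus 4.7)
The plan is to first prove \eqref{cont} by a Paley--Littlewood decomposition in $\xi$, reducing to a kernel estimate that fits the hypotheses of Lemma \ref{lem:K}; the Weyl bound \eqref{cont0} will then follow by combining \eqref{cont} with Lemma \ref{thm:change}.

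I would first reduce \eqref{cont} to the case $m=s=0$. On the Fourier side, $\langle D\rangle^{s-m}\Op{a}\langle D\rangle^{-s}$ coincides with $\Op{\tilde a}$ for the symbol
\[
\tilde a(x,\xi) := \sum_{n\in\Z^d}\langle n+\xi\rangle^{s-m}\langle\xi\rangle^{-s}\,\widehat a(n,\xi)\,e^{\im n\cdot x},
\]
and since spectral localization forces $\langle n+\xi\rangle \sim \langle\xi\rangle$ on the support of $\widehat a(n,\xi)$, the factor $\langle n+\xi\rangle^{s-m}\langle\xi\rangle^{-s}$ behaves as a smooth $\xi$-multiplier of order $-m$, yielding $\tilde a \in \Sigma^0_{L^\infty}$ with $|\tilde a|_{0,L^\infty,N} \lesssim_{s,m} |a|_{m,L^\infty,N}$. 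Hence it suffices to show $\|\Op{a}u\|_{L^2} \lesssim |a|_{0,L^\infty,d+1}\|u\|_{L^2}$ for $a \in \Sigma^0_{L^\infty}$.

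For this, decompose $a = \sum_{\ell \geq 0} a_\ell$ with $a_0 := \chi(\xi) a$ and $a_\ell(x,\xi) := \varphi(2^{-\ell}\xi)a(x,\xi)$ for $\ell \geq 1$, using \eqref{PUN}. Each $a_\ell$ has $\xi$-support in $\langle\xi\rangle \sim 2^\ell$ and inherits spectral localization. The associated kernel
\[
\kappa_\ell(x,y) := (2\pi)^{-d}\sum_{k\in\Z^d} a_\ell(x,k)\,e^{\im k\cdot(x-y)}
\]
is a finite sum over $\sim 2^{d\ell}$ indices. I would estimate $\kappa_\ell$ in two complementary ways, mimicking the proof of Lemma \ref{thm:change}: the trivial bound gives $|\kappa_\ell|\lesssim 2^{d\ell}|a|_{0,L^\infty,0}$, and applying the Abel resummation formula \eqref{Abeld} exactly $d+1$ times in some direction $h \in \{1,\ldots,d\}$ -- distributing via the Leibniz rule \eqref{finitediff} the discrete derivatives onto $a(\cdot,k)$ and $\varphi(2^{-\ell}\cdot)$, each producing a factor $\sim 2^{-\ell}$ -- yields
\[
|\kappa_\ell(x,y)| \lesssim \frac{2^{-\ell}\,|a|_{0,L^\infty,d+1}}{|2\sin((x_h-y_h)/2)|^{d+1}}.
\]
These two bounds match precisely the hypothesis \eqref{ipok} of Lemma \ref{lem:K} with $A = 2^\ell$ and $B = |a|_{0,L^\infty,d+1}$, so that lemma gives $\sup_x\int_{\T^d}|\kappa_\ell(x,y)|\di y \lesssim |a|_{0,L^\infty,d+1}$ and, by symmetry, the dual bound, whence Schur's test delivers $\|\Op{a_\ell}\|_{L^2\to L^2} \lesssim |a|_{0,L^\infty,d+1}$ uniformly in $\ell$.

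Since $\Op{a_\ell}u$ has Fourier support in a thickened dyadic shell $|j|\sim 2^\ell$ (by spectral localization of $a_\ell$) and coincides with $\Op{a_\ell}\tilde P_\ell u$ for a smooth spectral projection $\tilde P_\ell$ onto $|k| \sim 2^\ell$, the family $\{\Op{a_\ell}u\}_\ell$ is almost orthogonal in $L^2$, giving
\[
\|\Op{a}u\|_{L^2}^2 \lesssim \sum_\ell \|\Op{a_\ell}u\|_{L^2}^2 \lesssim |a|_{0,L^\infty,d+1}^2 \sum_\ell \|\tilde P_\ell u\|_{L^2}^2 \lesssim |a|_{0,L^\infty,d+1}^2 \|u\|_{L^2}^2,
\]
which proves \eqref{cont}. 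For \eqref{cont0}, I would apply Lemma \ref{thm:change} to write $\Opw{a}=\Op{b}$ with $|b|_{m,L^\infty,d+1}\lesssim|a|_{m,L^\infty,2(d+1)}$, then invoke \eqref{cont} for $\Op{b}$. The main technical obstacle is carrying out the $(d+1)$-fold Abel resummation estimate for $\kappa_\ell$ rigorously, since one must track the translations introduced by the discrete Leibniz rule \eqref{finitediff} and absorb them into the symbol seminorms; the number $d+1$ of $\xi$-derivatives is dictated exactly by the exponent appearing in Lemma \ref{lem:K}.
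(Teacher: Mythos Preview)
Your argument is correct and follows the same core strategy as the paper: dyadic decomposition of the symbol in $\xi$, kernel estimates via Abel resummation \eqref{Abeld} feeding into Lemma~\ref{lem:K}, Schur's test for the $L^2$ bound on each piece, almost orthogonality from spectral localization to sum, and finally Lemma~\ref{thm:change} for \eqref{cont0}. The one structural difference is your preliminary reduction to $m=s=0$ by conjugation with $\langle D\rangle^{s-m}$, $\langle D\rangle^{-s}$. The paper does not do this; it keeps general $m,s$ throughout, proves directly the dyadic bound $\|\Op{a_k}v\|_{0}\lesssim 2^{km}|a|_{m,L^\infty,d+1}\|v\|_{0}$ (the $2^{km}$ factor simply reflecting the symbol order on the shell $|\xi|\sim 2^k$), and then sums using the Littlewood--Paley equivalence $\|u\|_s^2\sim\sum_k 2^{2ks}\|\Delta_k u\|_0^2$.

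Your reduction works but is less free than you make it sound. The symbol $\tilde a$ is obtained from $a$ by applying, for each fixed $\xi$, the Fourier multiplier in $x$ with symbol $\eta\mapsto\langle\eta+\xi\rangle^{s-m}\langle\xi\rangle^{-s}$ (restricted to $|\eta|\lesssim\langle\xi\rangle$ by spectral localization). Controlling $\|\partial_\xi^\beta\tilde a(\cdot,\xi)\|_{L^\infty}$ in terms of the $L^\infty$ seminorms of $a$ therefore requires these cut-off multipliers to act boundedly on $L^\infty$ with norm $\lesssim\langle\xi\rangle^{-m}$, which in turn needs an $L^1$ bound on their kernels --- essentially another instance of the Lemma~\ref{lem:K}/Lemma~\ref{lem:FC} machinery you then invoke again for the dyadic pieces. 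So the reduction duplicates effort rather than saving it; the paper's direct route is slightly more economical.
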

\begin{proof}
We first recall the Littlewood-Paley characterization 
of  the Sobolev norm 
\begin{equation}\label{Soboeq}
\| u \|_s^2 \sim \sum_{k \geq 0} 2^{2ks} \| \Delta_k u \|_0^2 
\end{equation}
where $ \Delta_k $ are defined in \eqref{LP}. The norm $ \| \ \|_0 = \| \ \|_{L^2} $. 
We first prove \eqref{cont}.  
\\[1mm]
\underline{Step 1:} 
according to \eqref{PUN}, we perform the Littlewood-Paley decomposition of 
$\Op{a} $, 
\begin{equation}\label{PLBW}
\Op{a}v = \sum_{k \geq 0} \Op{a_k}v \, , 
\end{equation}
where
\begin{equation}
\label{lock}
a_0 (x,\xi) := a(x,\xi) \chi(\xi) \, , 
\quad  a_k(x,\xi) := a(x,\xi) \varphi(2^{-k}\xi) \, , \quad k \geq 1 \, . 
\end{equation}
In order to  prove \eqref{cont}, it is sufficient to prove that 
\begin{equation}
\label{eq:L2L2}
\norm{\Op{a_k}v}_{0} \lesssim  |a|_{m, L^\infty, d+1} \, 2^{km} \, \norm{v}_{0} , \quad \forall k \in \N_0 \, , \quad \forall v \in L^2 \, . 
\end{equation}
Indeed, decomposing $ v$ in Paley-Littlewood  packets as in \eqref{LP}, 
\begin{equation}\label{PLv}
v = \sum_{j \geq 0} \Delta_j v \, , \quad  
\Delta_0 = \chi (D) \, , \ \Delta_j = \varphi(2^{-j} D ) \, , 
\end{equation}
which are {\it almost orthogonal} in $L^2$ (namely 
$ \Delta_k \Delta_j = 0 $  for any $ |j-k| \geq 3 $),   
using the fact that $ \Op{a_k}v = \Op{a}  \Delta_k v $,  
and 
{since the action of $\Op{a_k}$ does not spread much the Fourier support of functions being $a$ spectrally localized,} according to  \eqref{rela:para},  
we have 
\begin{align*}
\| \Op{a}v \|_{s-m}^2 & 
\stackrel{\eqref{PLBW}} = \Big\| \sum_{k \geq 0} \Op{a_k}v \Big\|_{s-m}^2 
\stackrel{\eqref{PLv},\eqref{lock}} = 
\Big\| \sum_{|j-k| < 3}  \Op{a_k} \Delta_j v \Big\|_{s-m}^2 \\
& \sim \sum_{|j-k| < 3} 2^{2k(s-m)} \,  \| \Op{a_k} \Delta_j v \|_{0}^2 
 \stackrel{\eqref{eq:L2L2}}  \lesssim  |a|_{m, L^\infty, d+1}^2 \, \sum_{|j-k| < 3}  \, 2^{2ks} \,  \| \Delta_j v \|_{0}^2 \\ 
& \lesssim   |a|_{m, L^\infty, d+1}^2 \, \sum_{k \geq 0} 2^{2ks}  \norm{\Delta_k v}_{0}^2  
\stackrel{\eqref{Soboeq}} \sim 
|a|_{m, L^\infty, d+1}^2 \| v \|_{s}^2 \, . 
\end{align*}

\noindent{\underline{Step 2:}} By 
 \eqref{lock} and \eqref{B} we  write $\Op{a_k}$ as the integral operator 
\begin{equation}
\label{opak}
(\Op{a_k} v)(x) = \int_{\T^d} K_k(x, x-y) \, v(y) \, \di y
\end{equation}
with kernel 
\begin{equation}
 \label{defK1pd} 
K_k (x,z) := \frac{1}{(2 \pi)^d} \sum_{\ell  \in \Z^d} e^{\ii \ell  \cdot z} \,  a(x,\ell ) \, 
\varphi(2^{-k}\ell ) \ .  
\end{equation}
We shall deduce \eqref{eq:L2L2} by applying the Schur lemma: if 
\begin{equation}\label{c1c2}
\sup_{x \in \T^d} \int_{\T^d} |K(x,x-y)| \di y =: C_1 < + \infty \, , \quad 
\sup_{y \in \T^d} \int_{\T^d} |K(x,x-y)| \di x =: C_2 < + \infty
\end{equation}
then Schur lemma guarantees that the   integral operator  \eqref{opak} 
 is bounded on  $ L^2 (\T^d) $ and
\begin{equation}
\label{estSchur}
\| \Op{a_k} v \|_0 \leq (C_1 C_2)^{1/2} \| v \|_0 \, . 
\end{equation}
Let us prove \eqref{c1c2} and estimate the constants $C_1, C_2$. 
By \eqref{defK1pd}  we have that
\begin{equation}
\abs{K_k(x,z)} \lesssim   \sum_{\ell \in \Z^d}  |a(x,\ell)| \varphi(2^{-k}\ell)  
\stackrel{\eqref{seminorm}} \lesssim  | a |_{m, L^\infty, 0} \sum_{\ell \in \Z^d} \la \ell \ra^m  \varphi(2^{-k}\ell ) \lesssim 
  2^{k(d+m)}    | a |_{m, L^\infty, 0} \, . \label{eq:smallerpd}
\end{equation}
Then,   applying $ (d+1)$-times  Abel resummation formula \eqref{Abeld} 
to \eqref{defK1pd}, we obtain, for any $ h = 1, \ldots, d $, 
$$
K_k (x,z) = 
\frac{1}{(2 \pi)^d} \frac{1}{(e^{\ii z_h}-1)^{d+1}} 
\sum_{\ell \in \Z^d} e^{\ii \ell \cdot z} \ \pa_h^{d+1} ( a (x,\ell)\varphi(2^{-k}\ell) )
$$
and we deduce, using \eqref{seminorm}, \eqref{finitediff}, 
$
\abs{K_k(x,z)}\lesssim 
\abs{2 \sin(z_h/2)}^{-d-1}  |a|_{m, L^\infty,d+1} 2^{k(m-1)} $
for any $ h = 1, \ldots, d $, 
thus 
\begin{equation}\label{eq:greaterpd}
\abs{K_k(x,z)}\lesssim    2^{k(d+m)}  \, |a|_{m, L^\infty,d+1} \, 
\ \min_{h =1, \ldots, d} \frac{1 }{( 2^{k} 2 \,  | \sin(z_h/2)|)^{d+1}  } \, .
\end{equation}
By  \eqref{eq:smallerpd}, \eqref{eq:greaterpd}
we apply Lemma \ref{lem:K} with 
$ A = 2^{k}$ and $B= 2^{km} |a|_{m, L^\infty, d+1} $, deducing that 
\begin{align}\label{sch1}
\int_{\T^d} |K_k\bigl(x,x-y\bigr)| \di y  
&= 
\int_{\T^d} |K_k\bigl(x,z\bigr)| \di z 
   \lesssim   2^{km} \, |a|_{m, L^\infty,d+1} 
\end{align}
uniformly for $ x \in \T^d $. Similarly
\begin{align}\label{Kypd}
\int_{\T^d} | K_k\bigl(x,x-y\bigr)| \di x 
& \lesssim   2^{km} \, |a|_{m, L^\infty,d+1} 
\end{align}
uniformly for $ y \in \T^d $. 
Finally \eqref{sch1}, \eqref{Kypd},  
\eqref{estSchur} prove \eqref{eq:L2L2} completing the proof of \eqref{cont}.
\\[1mm]
{\sc Proof of \eqref{cont0}.}
By Lemma \ref{thm:change} we have $\Opw{a} = \Op{b}$ for a 
 spectrally localized symbol $b \in \Sigma^m_{L^\infty}$ which 
 fulfills estimate \eqref{cambio2}. Then \eqref{cont0} follows by \eqref{cont}.
\end{proof}

\paragraph{Composition of paradifferential operators.}
We finally prove a composition result for paradifferential operators.  
The difference with respect to Theorem 6.1.1 and 6.1.4 in 
\cite{Met} is to have periodic symbols and
the use of the Weyl quantization.  

We shall use that, in view of  the interpolation inequality \eqref{inter},  if 
$ a \in \Gamma^m_{W^{\varrho,\infty}} $ and 
$ b \in \Gamma^{m'}_{W^{\varrho,\infty}}  $ then 
$ ab \in \Gamma^{m+m'}_{W^{\varrho,\infty}} $ and, for any 
$ N \in \N_0  $,  any $0 \leq \vr_1 \leq \alpha \leq \beta \leq\vr_2 $ such that  $\vr_1 + \vr_2 = \alpha + \beta$
\begin{equation}
\label{alg.symb}
\begin{aligned}
&\abs{ab}_{m+m', {W^{\varrho,\infty}}, N} \lesssim \abs{a}_{m, {W^{\varrho,\infty}}, N} \, \abs{b}_{m', L^\infty, N} + 
\abs{a}_{m, L^\infty, N} \, \abs{b}_{m',{W^{\varrho,\infty}}, N}  \,  ,\\
&
| a|_{m, W^{\alpha, \infty}, N} \, 
|b|_{m', W^{\beta, \infty},N} \lesssim
|a|_{m, W^{\vr_1,\infty}, N} \, |b|_{m', W^{\vr_2, \infty},N} +
|a|_{m, W^{\vr_2,\infty}, N} \, |b|_{m', W^{\vr_1, \infty},N} \, . 
\end{aligned}
\end{equation}

\begin{theorem} {\bf (Composition)}
\label{thm:comp2}
Let $a \in \Gamma^m_{W^{\vr, \infty}}$, $b \in \Gamma^{m'}_{W^{\vr, \infty}}$ with $m, m' \in \R$ and $\vr  \in (0,2]$.  Then  
\begin{align}
\label{comp01A}
\Opbw{a}\Opbw{b} 
& =  \Opbw{a\#_\vr b} + R^{-\vr}(a,b)
\end{align}
where the linear operator $R^{-\vr}(a,b)\colon {\dot H}^s \to {\dot H}^{s-(m+m')+\vr}$, 
$\forall s \in \R$,  satisfies
\begin{align}
\label{comp020}
\norm{R^{-\vr}(a,b)u}_{{s -(m+m') +\vr}} \lesssim  \left(\abs{a}_{m, W^{\vr, \infty}, N} \, \abs{b}_{m', L^\infty, N} + \abs{a}_{m, L^\infty, N} \, \abs{b}_{m', W^{\vr, \infty}, N}  \right) \norm{u}_{s}
\end{align}
with $ N \geq 3d+4$.
\end{theorem}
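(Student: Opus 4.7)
The plan is to prove the composition formula by direct Fourier-analytic computation in the Weyl quantization, followed by a Taylor expansion in the frequency variable $\xi$ of order $\lceil \vr \rceil$, then bounding the Taylor remainder via the continuity Theorem \ref{thm:cont2}. First I would use $\Opbw{a}\Opbw{b} = \Opw{a_\chi}\Opw{b_\chi}$ and, starting from formula \eqref{opW}, compute that this composition equals $\Opw{c}$ where the symbol $c$ has Fourier coefficients
\begin{equation*}
\wh c(m,\xi) \;=\; \sum_{n \in \Z^d} \wh{a_\chi}\!\left(m-n,\, \xi + \tfrac{n}{2}\right) \, \wh{b_\chi}\!\left(n,\, \xi - \tfrac{m-n}{2}\right) .
\end{equation*}

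Next I would Taylor-expand each factor in its second ($\xi$) argument around $\xi$, keeping terms up to order $\lceil \vr \rceil - 1$. For $\vr \in (0,1]$ only the zeroth-order term is retained, giving $\wh{a_\chi b_\chi}(m,\xi)$. For $\vr \in (1,2]$, I also retain the first-order terms; using $n^\alpha \wh f(n) = -\im \wh{\pa_x^\alpha f}(n)$ for $|\alpha|=1$, summation over $n$ reorganizes them into the Fourier coefficients of $\tfrac{1}{2\im}\{a_\chi, b_\chi\}$, so that the principal part is $\Opw{a_\chi b_\chi + \tfrac{1}{2\im}\{a_\chi,b_\chi\}}$. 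I would then convert this back to the Bony-Weyl form $\Opbw{a\#_\vr b}$ by absorbing into $R^{-\vr}(a,b)$ the discrepancy $\Opw{a_\chi b_\chi} - \Opw{(ab)_\chi}$ (and similarly for the Poisson bracket), which is controllable via \eqref{achi.est2}: writing $a_\chi b_\chi - (ab)_\chi = (a_\chi - a) b_\chi + a(b_\chi - b) + (ab - (ab)_\chi)$ each summand is a symbol of order $m+m'-\vr$ in $\Gamma^{m+m'-\vr}_{L^\infty}$ with the required seminorm bound, so Theorem \ref{thm:cont2} produces the operator estimate \eqref{comp020}.

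The main technical step is bounding the Taylor remainder. It takes the form of an integral $\int_0^1 (1-t)^{\lceil\vr\rceil-1} \pa_\xi^\alpha \wh{a_\chi}(m-n, \xi + t n/2) \, \di t$ weighted by $n^\alpha/\alpha!$ with $|\alpha|=\lceil\vr\rceil$ (and the symmetric contribution from $b_\chi$). The factors $n^\alpha$ correspond, in physical space, to $\lceil \vr \rceil$ $x$-derivatives on $b_\chi$ (resp.\ $a_\chi$). Using estimate \eqref{achi.est3} for the $\lceil \vr \rceil$-th $x$-derivative of the regularized symbol, which costs only a factor $\la \xi \ra^{\lceil \vr \rceil -\vr}$ instead of $\la \xi \ra^{\lceil \vr \rceil}$, combined with the $\la \xi \ra^{-\lceil \vr \rceil}$ decay produced by the $\lceil \vr \rceil$-th $\xi$-derivative of the other factor, I obtain a net gain of $\la \xi \ra^{-\vr}$. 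Together with the algebra inequality \eqref{alg.symb}, this shows that the remainder symbol belongs to $\Gamma^{m+m'-\vr}_{L^\infty}$ with seminorms controlled by the right-hand side of \eqref{comp020}. Applying Theorem \ref{thm:cont2} then yields the claimed operator bound. The main obstacle is the non-integer exponent $\vr$: for $\vr \notin \N$ the classical Taylor remainder does not give the sharp $\vr$-gain, and one must use precisely the improved smoothing estimate \eqref{achi.est3} (i.e.\ the fact that $\pa_x^\alpha a_\chi$ lies in $\Sigma^{m+|\alpha|-\vr}_{L^\infty}$ for $|\alpha|\geq \vr$), which encodes the Hölder regularity of the unregularized symbol through a dyadic Littlewood-Paley argument in the proof of Lemma \ref{lem:achi}.
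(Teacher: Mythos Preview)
Your overall strategy matches the paper's: compute $\Opw{a_\chi}\Opw{b_\chi}$ by Fourier expansion, Taylor-expand in $\xi$ to order $\lceil\vr\rceil$, absorb the discrepancy between $a_\chi b_\chi$ (and $\{a_\chi,b_\chi\}$) and $(a\#_\vr b)_\chi$ via Lemma~\ref{lem:achi}, and estimate the Taylor remainder. You also correctly identify \eqref{achi.est3} as the mechanism producing the sharp $\vr$-gain when $\vr\notin\N$.

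There is, however, a genuine gap in your treatment of the Taylor remainder. You claim that ``together with the algebra inequality \eqref{alg.symb}, this shows that the remainder symbol belongs to $\Gamma^{m+m'-\vr}_{L^\infty}$'', but \eqref{alg.symb} only applies to pointwise products $a(x,\xi)b(x,\xi)$. The Taylor remainder is \emph{not} such a product: after the expansion, the resulting symbol has Fourier coefficients in which $\wh{a_\chi}$ and $\wh{b_\chi}$ are evaluated at \emph{different} $\xi$-shifts depending on the internal summation index (in the paper's notation, at $\xi+\tfrac{n+tj}{2}$ and $\xi+\tfrac{j}{2}$ respectively). Summed naively over these indices there are $\sim\langle\xi\rangle^{2d}$ terms, so without exploiting cancellation you do not even get an $L^\infty$ bound, let alone the correct order.

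The paper handles this by writing the remainder symbol $f^\alpha_t(x,\xi)$ as an integral over $\T^{2d}$ of an explicit kernel $K_t(x,y,z)$ (see \eqref{cappacappa}), then applying Abel resummation \eqref{Abeld} in each lattice variable to gain decay $|\langle\xi\rangle\,2\sin(y_h/2)|^{-(2d+1)}$ and $|\langle\xi\rangle\,2\sin(z_h/2)|^{-(2d+1)}$, and finally invoking Lemma~\ref{lem:K} (in dimension $2d$) to integrate. This is precisely where the requirement $N\ge 3d+4$ originates: $2d+1$ discrete derivatives from Abel resummation, $d+1$ from the continuity Theorem~\ref{thm:cont2}, and $2$ from $|\alpha|=2$. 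Your sketch contains no trace of this argument. A related omission is that Theorem~\ref{thm:cont2} needs the symbol to be spectrally localized (in $\Sigma^m_{L^\infty}$, not merely $\Gamma^m_{L^\infty}$); the paper verifies this for $f^\alpha_t$ in \eqref{cutcut}, and you should too.
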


\begin{proof}
We give the proof in the case $ \vr \in (1, 2] $. 
We first compute  $\Opbw{a}\Opbw{b}$. Recalling the definition \eqref{BW}
we obtain 
\begin{align*}
\Opbw{a}\Opbw{b}u 
& =  
\Opw{a_\chi} \Opw{b_\chi} 
=  \sum_{ j, k , \ell } \wh a_\chi\Big(j-k, \frac{j+k}{2} \Big) \, 
\wh b_\chi \Big( k-\ell, \frac{k + \ell}{2} \Big)  \,  
u_\ell  \, 
e^{\im j \cdot x} \, . 
\end{align*}
We now perform a 
Taylor expansion of $ \wh a_\chi\big(j-k, \frac{j+k}{2} \big)  $ 
in the second variable, 
around the point $\frac{j+ \ell}{2}$. Writing $j+ k = j+ \ell + (k - \ell)$, we obtain  
\begin{align*}
\wh a_\chi\Big(j-k, \frac{j+k}{2} \Big)
& = \wh a_\chi\Big(j-k, \frac{j+\ell}{2} \Big) +  
\Big( \frac{k-\ell}{2}\Big) \cdot \pa_\xi \wh a_\chi\Big(j-k, \frac{j+\ell}{2} \Big)
\\
& \quad + \sum_{\alpha \in \N_0^d, |\alpha| = 2} \Big( \frac{k-\ell}{2}\Big)^{\alpha}  \int_0^1 (1-t) \,  \pa_\xi^\alpha \wh a_\chi\Big(j-k, \frac{j+\ell + t(k- \ell)}{2} \Big) \di t \, . 
\end{align*}
We expand analogously 
$ \wh b_\chi\big(k-\ell, \frac{k+ \ell}{2} \big)  $ around the point $\frac{j+\ell}{2}$.
Writing $ k + \ell = j+ \ell - (j-k)$, we obtain   
\begin{align*}
\wh b_\chi\Big(k-\ell, \frac{k+\ell}{2} \Big)
& =  \wh b_\chi\Big(k-\ell, \frac{j+\ell}{2} \Big) -
 \Big( \frac{j-k}{2}\Big)\cdot \pa_\xi \wh b_\chi\Big(k-\ell, \frac{j+\ell}{2} \Big)\\
&\quad 
+ \sum_{\beta \in \N_0^d,  |\beta| = 2}   \Big( \frac{k-j}{2}\Big)^{\beta} \int_0^1(1-t)   \, \pa_\xi^\beta \wh b_\chi\Big(k-\ell, \frac{j+\ell + t(k- j)}{2} \Big) \di t \, . 
\end{align*}
Moreover, recalling \eqref{regolarized-simbol} and \eqref{opW}, we write
$\Opbw{a\#_\vr b  } u 
= \Opw{(ab + \frac{1}{2\im}\{a, b\})_\chi} u $ and, by the previous expansions,  
\begin{align*}
\Big(\Opbw{a}\Opbw{b} - {\rm Op}^{BW} \Big( ab + \frac{1}{2\im}\{a,b\} \Big) \Big) u 
& = \sum_{i=1}^4 R_i(a,b) u  
\end{align*}
where
\begin{align}
\label{R1}
& R_1(a,b)u := {\rm Op^W} \Big( {a_\chi b_\chi - (ab)_\chi + \frac{1}{2\im}
\big( \{a_\chi, b_\chi\} - (\{a,b\})_\chi \big)} \Big) u \\
\label{R2}
& R_2(a,b)u:= \sum_{j,k,\ell}
\wh b_\chi\Big(k-\ell, \frac{k+\ell}{2} \Big) \!\! \sum_{|\alpha| = 2}
 \Big( \frac{k-\ell}{2}\Big)^{\alpha}  \!\!
 \int_0^1 \!\!(1-t) \,  \pa_\xi^\alpha \wh a_\chi\Big(j-k, \frac{j+\ell + t(k- \ell)}{2} \Big) \di t 
 u_\ell  \, e^{\im j \cdot x}
 \\
\label{R3}
& R_3(a,b) u := \sum_{j,k,\ell} 
\!\! - \Big( \frac{k-\ell}{2}\Big)\!  \cdot \pa_\xi \wh a_\chi\Big(j-k, \frac{j+\ell}{2} \Big)   \Big( \frac{j-k}{2}\Big)\! \cdot \!\! \int_0^1 \!\! \pa_\xi\wh b_\chi\Big(k-\ell, \frac{j+\ell + t(k- j)}{2} \Big) \di t  
 u_\ell  \, e^{\im j \cdot x} \\
\label{R4}
& R_4(a,b) u := \sum_{j,k,\ell} \wh a_\chi\Big(j-k, \frac{j+\ell}{2} \Big)  
\!\! \sum_{|\beta| = 2}  
 \Big( \frac{k-j}{2}\Big)^{\beta} \!\!
 \int_0^1 \!\! (1-t)   \, \pa_\xi^\beta \wh b_\chi\Big(k-\ell, \frac{j+\ell + t(k- j)}{2} \Big) \di t  
  u_\ell  \, e^{\im j \cdot x}. 
\end{align}
We show now that the operators $R_i(a,b)$, $ i = 1, \ldots, 4$ fulfill estimate \eqref{comp020}. \\

\noindent{\underline{ Estimate of $R_1(a,b)$.}} 
By exchanging the role of $a$ and $b$ it is enough  to prove that the symbols 
$ \pa_\xi^\alpha a_\chi \pa_x^\alpha b_\chi- (\pa_\xi^\alpha a \,\pa_x^\alpha b)_\chi$, $|\alpha|\leq 1$,  belong to $\Sigma_{L^\infty}^{m+m'-\vr}$  and then apply Theorem \ref{thm:cont2}.  
The spectral localization property follows because of the cut-off $\chi_\epsilon$ and $\epsilon$ small. 
As $\pa_x^\alpha$ commutes with the Fourier multiplier $ \chi_{\epsilon \la \xi \ra}(D)  $
we have  that $ \pa_x^\alpha  b_\chi = (\pa_x^\alpha  b)_\chi $
and we write $ \pa_{\xi}^\alpha a_\chi \, \pa_{x}^\alpha b_\chi - (\pa_{\xi}^\alpha a \, \pa_{x}^\alpha b)_\chi $ as
\begin{align}
 &(\pa_{\xi}^\alpha a)_\chi\left[(\pa_{x}^\alpha b)_\chi - \pa_{x}^\alpha b\right] + \left[(\pa_{\xi}^\alpha a)_\chi - \pa_{\xi}^\alpha a\right] \pa_x^\alpha b + \left[\pa_\xi^\alpha a\, \pa_x^\alpha b - (\pa_\xi^\alpha a \, \pa_x^\alpha b)_\chi\right]\label{blunotte}\\
&+\left[ \pa_{\xi}^\alpha a_\chi - (\pa_{\xi}^\alpha a)_\chi \right] (\pa_{x}^\alpha b)_\chi \, .\label{blunotte1}
\end{align}
Consider first the term in \eqref{blunotte1}. 
By  Lemma \ref{lem:achi}, 
 $\pa_{\xi}^\alpha a_\chi - (\pa_{\xi}^\alpha a)_\chi  \in \Gamma^{m-\vr-|\alpha|}_{L^\infty}$ and $(\pa_{x}^\alpha b)_\chi \in \Gamma_{L^\infty}^{m'+|\alpha|}$ and 
 by remark ($v$) after Definition \ref{def:sfr}, for any $ n \in \N_0 $,
\begin{align}
\notag
 \abs{\left[ \pa_{\xi}^\alpha a_\chi - (\pa_{\xi}^\alpha a)_\chi \right] (\pa_{x}^\alpha b)_\chi  }_{m+m'-\vr, L^\infty, n} 
\notag
& \leq 
 \big| \pa_{\xi}^\alpha a_\chi - (\pa_{\xi}^\alpha a)_\chi 
\big|_{m-|\alpha|-\vr, L^\infty, n}
 | (\pa_{x}^\alpha b)_\chi |_{m'+ |\alpha|, L^\infty, n}   \\
\notag
& \stackrel{\eqref{achi.est1},\eqref{achi.est3}}
\lesssim \abs{a}_{m, W^{\vr, \infty}, n+|\alpha|} \, \abs{b}_{m', L^{\infty}, n}  \, . 
\end{align}
Next consider the terms in \eqref{blunotte}. By remarks ($iii$), ($iv$) after Definition \ref{def:sfr},  we have
$ \pa_{\xi}^\alpha a \in \Gamma^{m-|\alpha|}_{W^{\vr, \infty}}\subset \Gamma^{m-|\alpha|}_{W^{\vr - |\alpha|, \infty}}$, $\pa_{x}^\alpha b \in \Gamma^{m'}_{W^{\vr -|\alpha|, \infty}}$, so we can apply  Lemma \ref{lem:achi},  
property \eqref{alg.symb} and  \eqref{sim1} to obtain
\begin{align}
\notag
\abs{\eqref{blunotte}}_{m+m'-\vr, L^\infty, n} 
&
\lesssim  \abs{a}_{m, W^{\vr-|\alpha|, \infty}, n+|\alpha|} \, \abs{b}_{m', W^{|\alpha|, \infty}, N}
+
\abs{a}_{m, L^\infty, n+|\alpha|} \, \abs{b}_{m', W^{\vr, \infty}, n} \\
& \lesssim 
\abs{a}_{m, W^{\vr, \infty}, n+1} \, \abs{b}_{m', L^\infty, n+1}
+
\abs{a}_{m, L^\infty, n+1} \, \abs{b}_{m', W^{\vr, \infty}, n+1} 
\label{comp9}
\end{align}
where to pass from the first to the second line we used the second interpolation inequality in \eqref{alg.symb}.
Altogether we have proved that the symbol in \eqref{R1} belongs to $\Sigma^{m+m'-\vr}_{L^\infty}$ and its seminorms are bounded by   \eqref{comp9}.
Then Theorem \ref{thm:cont2} proves that $R_1(a,b)$ fulfills
estimate \eqref{comp020}.\\

 \noindent{\underline{Estimate of $R_{2}(a,b)$}.} First we rewrite \eqref{R2} as 
 $$
 R_{2}(a,b)u =
\frac14 \sum_{j,\ell} \Big(  \int_0^1(1-t) \,  \sum_{|\alpha|=2} 
\wh f^\alpha_t (j-\ell, \ell) \, \di t \Big)  u_\ell \, e^{\im j \cdot x} 
 $$
where 
$$
\begin{aligned}
\wh{ f^\alpha_t}(n, \ell)
& := \sum_{k \in \Z^d}  
\wh {D_x^\alpha b_\chi}\Big(k-\ell, \frac{k+\ell}{2} \Big) \pa_\xi^\alpha \wh a_\chi\Big(n+\ell -k, \, \ell + \frac{n + t(k- \ell)}{2} \Big)   \\
& 
\stackrel{j=k-\ell} = \sum_{j \in \Z^d}  
\wh {D_x^\alpha b_\chi}\Big(j, \ell + \frac{j}{2} \Big) 
\pa_\xi^\alpha \wh a_\chi\Big(n-j, \, \ell + \frac{n + tj}{2} \Big)
\end{aligned}
$$ 
and $ D_{x_n} := \pa_{x_n} / \im $ and $ D_x^\alpha := 
D_{x_1}^{\alpha_1} \cdots D_{x_d}^{\alpha_d} $.  Then, recalling \eqref{B},
$$
R_{2}(a,b) u = \frac14 \int_0^1(1-t) \sum_{|\alpha|=2} 
\Op{ f^\alpha_t } u \, \di t  
$$
 where 
\begin{equation}\label{deffxxi}
f^\alpha_t(x,\xi):= 
\sum_{n,j} \wh {D_x^\alpha b_\chi}\Big(j,\xi  +\frac{j}{2} \Big)     \pa_\xi^\alpha 
\wh a_\chi\Big(n-j, \, \xi  + \frac{n + tj}{2} \Big)  e^{\im n \cdot x} .
\end{equation}
We claim that $ {f_t^\alpha} (x, \xi) $ is spectrally localized, namely 
\begin{equation}
\label{cutcut}
\exists \delta \in (0,1) \colon \ \ \  |n | \leq \delta \la \xi \ra , \quad \forall (n, \xi) \in \supp \wh{f_t^{\alpha}} \, . 
\end{equation}
In fact on the support of $\wh b_\chi\big(j, \xi  +\frac{j}{2} \big)$ we have, for some
$ \delta' \in (0,1) $,
\begin{equation}\label{pricut}
 |j|  \leq \delta' \langle \xi \rangle \, , 
 \end{equation}
whereas, on the support of
$  \pa_\xi^\alpha \wh a_\chi\big(n -j, \xi  + \frac{n + tj}{2} \big)$, $ t \in [0,1] $,  
\begin{align}
& | n - j | \leq \delta \langle \xi  \rangle + \delta \langle n \rangle + \delta \langle j \rangle 
\stackrel{\eqref{pricut}} \leq (\delta  + \delta \delta' ) 
\langle \xi  \rangle + \delta \langle n \rangle \, . \label{cutcut2}
\end{align}
The estimates \eqref{pricut}-\eqref{cutcut2}  then give
$ | n | \leq | j | +| n - j | \leq \delta' \langle \xi \rangle + (\delta  + \delta \delta' ) 
\langle \xi  \rangle + \delta \langle n \rangle  $, 
which implies  \eqref{cutcut}.

In order to apply Theorem \ref{thm:cont2} it remains to prove that,   
for any $ N \geq 3d+4 $, 
\begin{equation}\label{ftad}
 | f_t^\alpha (x,\xi)|_{m+m'-\vr, L^\infty, d+1}\lesssim 
 |  b |_{m', W^{\vr,\infty}, N} \ | a |_{m, L^\infty, N}  \, ,
\end{equation}
which implies, 
for any $s\in \R$, $ u \in \dot H^s $,  $
 \| R_{2}(a,b)u\|_{{s-m-m'+\vr}}\lesssim  |  b |_{m', W^{\varrho,\infty}, N}\  | a |_{m, L^\infty, N}\, \| u\|_{s} $. 
Thus $R_2(a,b)$ satisfies the estimate \eqref{comp020}.

In order to prove \eqref{ftad} note that, differentiating  \eqref{deffxxi}, 
for any $ \beta \in \N_0^{d} $,  
 \begin{align}\label{deffxxib}
\pa_\xi^\beta f^\alpha_t(x,\xi) & = 
\sum_{\beta_1+\beta_2 = \beta} C_{\beta_1, \beta_2}
\sum_{n,j} \wh {\pa_{\xi}^{\beta_1} 
D_x^\alpha b_\chi}\Big(j,\xi  +\frac{j}{2} \Big)     \pa_\xi^{\alpha+\beta_2} 
\wh a_\chi\Big(n-j, \, \xi  + \frac{n + tj}{2} \Big)  e^{\im n \cdot x}  \nonumber \\
& = 
\sum_{\beta_1+\beta_2 = \beta} C_{\beta_1, \beta_2}
\int_{\T^{2d}} K_t^{\beta_1, \beta_2} (x, y, z) \, \di y\, \di z 
\end{align}
where $ C_{\beta_1, \beta_2} $ are binomial coefficients and 
\begin{equation}\label{cappacappa}
K_t^{\beta_1, \beta_2} (x,y,z):= \frac{1}{(2\pi)^{2d}} \sum_{n,j}  (\pa_\xi^{\beta_1} D_x^\alpha b_\chi)\Big(x-z-y,\xi  +\frac{j}{2} \Big)   \pa_\xi^{\alpha+\beta_2} a_\chi\Big(x-z, \, \xi  + \frac{n + tj}{2} \Big)  e^{\im (n \cdot z + j \cdot y)} \, . 
 \end{equation}
 By \eqref{cutcut} and \eqref{pricut}
the sum over $n$ in \eqref{deffxxi} is restricted to  indexes satisfying 
$$ 
|n| \ll \la \xi \ra \, , \ |j|\ll \langle \xi \rangle \, , 
 \qquad {\rm and \ therefore} \qquad
\langle  \xi+\frac{j}{2}\rangle \sim\langle  \xi + \frac{n + tj}{2}\rangle \sim \langle \xi\rangle 
\, .
$$ 
We deduce that the sum in \eqref{cappacappa} is bounded by 
\begin{align}
\notag
\big| K_t^{\beta_1, \beta_2}(x,y,z) \big| &\lesssim  \langle \xi \rangle^{2d + m + m' - |\beta| -\vr} 
\, 
| \pa_\xi^{\beta_1} D_x^\alpha b_\chi |_{m'-|\beta_1|+|\alpha|-\vr, L^\infty, 0} 
\, 
 | \pa_\xi^{\alpha+\beta_2} a_\chi |_{m -|\alpha|-|\beta_2|, L^\infty, 0} \\
&
 \stackrel{\eqref{sim1}, \eqref{achi.est3}, \eqref{achi.est}} \lesssim
 \langle \xi \rangle^{2d + m + m' - |\beta|- \vr} 
 \, |b |_{m', W^{\vr, \infty}, |\beta|} \,  | a |_{m, L^\infty, 2+|\beta|}  \, , \label{cappa4} 
 \end{align}
 recalling that $|\alpha| = 2 $.
 We also estimate $K_t^{\beta_1, \beta_2} (x, y, z)$ applying Abel resummation formula \eqref{Abeld} in the sum \eqref{cappacappa},  in the  index $n$ and in the index $j$ separately,  obtaining, using \eqref{achi.est3}, \eqref{achi.est},  \eqref{finitediff} and \eqref{sim1},
\begin{equation}
\begin{aligned}
\big| K_t^{\beta_1, \beta_2}(x,y,z) \big| & \lesssim \langle \xi \rangle^{2d+m+m'-|\beta|-\vr} \, 
| b|_{m', W^{\vr,\infty} , 2d+1+|\beta|}
 | a |_{m, L^\infty, 2d+3+|\beta|} \\
& \quad \times  \min_{1\leq h \leq d} \Big( \abs{  \langle \xi \rangle 2 \sin \frac{y_h}{2} }^{-(2d+1)}, \  \abs{  \langle \xi \rangle 2\sin \frac{z_h}{2} }^{-(2d+1)} \Big) \, .  \label{cappanew} 
 \end{aligned}
\end{equation}
In view of  \eqref{cappa4}-\eqref{cappanew} and $ |\beta| \leq d+ 1 $, 
we apply Lemma \ref{lem:K}  with $d \leadsto 2d$, choosing
$A=\langle \xi \rangle $, 
$ B=\langle \xi \rangle^{m+m'-|\beta|-\vr} \,  | b|_{m', W^{\vr,\infty} , 2d+1+|\beta|} \, 
 | a |_{m, L^\infty, 2d+3+|\beta|} $
 and we obtain 
\begin{equation*}
\|  \pa_\xi^{\beta} f^\alpha_t(\cdot,\xi) \|_{L^\infty} \lesssim 
\int_{\T^{2d}} |K_t^{\beta_1, \beta_2} (x, y, z)| \, \di y\, \di z  
\lesssim
\langle \xi \rangle^{m+m'-\vr-|\beta|} \, 
| b|_{m', W^{\vr,\infty} , 3d+2} \, 
 | a |_{m, L^\infty, 3d+4} 
 \end{equation*}
proving \eqref{ftad}. 
\\
\indent
The proof that $R_3(a,b)$ and $R_4(a,b)$  satisfy the estimate \eqref{comp020}
 follows similarly. 
  \end{proof}

\end{document}